\newtheorem*{theorem*}{Theorem}
\newtheorem{theorem}{Theorem}
\newtheorem{corollary}[theorem]{Corollary}
\newtheorem{proposition}[theorem]{Proposition}
\newtheorem{lemma}[theorem]{Lemma}
\renewcommand{\epsilon}{\varepsilon}
\renewcommand{\phi}{\varphi}
\newcommand{\SetT}{\mathbb{T}}
\newcommand{\SetR}{\mathbb{R}}
\newcommand{\SetZ}{\mathbb{Z}}
\newcommand{\SetN}{\mathbb{N}}
\newcommand{\SetCl}{\mathscr{C}}
\newcommand{\SetSymmetric}{\mathscr{S}}
\newcommand{\SetPerm}{\mathfrak{S}}
\newcommand{\OpeIntd}{\,\mathrm{d}}
\DeclareMathOperator{\OpeComatrix}{Co}
\newcommand{\OpeTranspose}[1]{#1^{\mathrm{T}}}
\DeclareMathOperator{\OpeDiv}{div}
\newcommand{\OpeNablaSub}[1]{\nabla_{\!#1}}
\newcommand{\OpeKantorovich}[1]{\Psi_{\!#1}}
\newcommand{\NotZeroMeanValue}{{\diamond}}
\newcommand{\NotStarZeroMeanValue}{{*,\diamond}}
\newcommand{\NotGathered}{\widehat}
\newcommand{\NotDecomPsi}{\NotGathered{\psi}}
\newcommand{\idmatrix}{I}
\newcommand{\idmap}{\mathrm{id}}
\newcommand{\FirstOperator}{\mathcal{F}}
\newcommand{\SecondOperator}{\mathcal{G}}
\newcommand{\InverseOperator}{\mathcal{S}}
\title{ From Knothe's Rearrangement \\ to Brenier's Optimal Transport Map }
\author{Nicolas \textsc{Bonnotte}}
\affil{Universit\'e Paris-Sud, Orsay}
\affil{Scuola Normale Superiore, Pisa}
\date{ \today \\ \small (revised version) }
\begin{document}

%
%

\maketitle

%
%

\begin{abstract}
   The Brenier optimal map and the Knothe--Rosenblatt rearrangement are two instances of a transport map, that is to say a map sending one measure onto another.  The main interest of the former is that it solves the Monge--Kantorovich optimal transport problem, while the latter is very easy to compute, being given by an explicit formula.
         
   A few years ago, Carlier, Galichon, and Santambrogio showed that the Knothe rearrangement could be seen as the limit of the Brenier map when the quadratic cost degenerates. 
   In this paper, we prove that on the torus (to avoid boundary issues), when all the data are smooth, the evolution is also smooth, and is entirely determined by a \textsc{pde} for the Kantorovich potential (which determines the map), with a subtle initial condition. The proof requires the use of the Nash--Moser inverse function theorem. 

   This result generalizes the \textsc{ode} discovered by Carlier, Galichon, and Santambrogio when one measure is uniform and the other is discrete, and could pave to way to new numerical methods for optimal transportation.

\bigskip

  \noindent \textbf{Key words.}\quad Optimal transport, Knothe--Rosenblatt rearrangement, continuation methods, Nash--Moser inverse function theorem.

\medskip

  \noindent \textbf{\textsc{ams} classification.}\quad \textsc{35j96, 47j07, 49k21}.
\end{abstract}

%
%

\vfill

\thispagestyle{empty}

\footnotesize 

\noindent \rule{7em}{0.4pt}
\setlength{\parindent}{1em}

\smallskip

Nicolas \textsc{Bonnotte}

\smallskip

Département de mathématiques

Université Paris-Sud

91405 Orsay \textsc{cedex}

\textsc{France}

\smallskip

\url{http://www.normalesup.org/~bonnotte} 

\url{nicolas.bonnotte@normalesup.org}

\normalsize

\vspace*{-3.5em}

\thispagestyle{empty}

\newpage

%
%

\section{Introduction}

Although optimal transport theory has far-reaching applications, in fields as diverse as continuum mechanics, statistics or image processing, its underlying problem is quite simple: how to send one probability measure onto another, while minimizing some cost of transportation? 
Let us denote by $\mu$ and $\nu$ those two measures, defined respectively on $X$ and $Y$. They could for instance represent the respective distributions of some goods  being produced, and the needs for them, and the problem would then be to determine how to organize the supply so that the total cost of transportation is as small as possible. 

What we are looking for is a map $T : X \rightarrow Y$ telling us where to send what is in $x$; but $T$ will be suitable only if, for any measurable set $A \subset Y$, 
the goods sent by $T$ in $A$ match the needs of the same region,
that is to say if $\mu(T^{-1}(A)) = \nu(A)$. If this condition is satisfied, $\nu$ is said to be the push-forward of $\mu$ by $T$, and we write $\nu = T \# \mu$. Let us denote by $c(x,y)$ the cost for going from $x$ to $y$, then the total cost of transportation we want to minimize is
\begin{equation}\label{EqMonge}
\int_X c(x,T(x)) \OpeIntd \mu(x).
\end{equation}
Notice however that an optimal map may well not exist, and worse, there might even be no map transporting $\mu$ onto $\nu$ at all, e.g. if $\mu$ is discrete and $\nu$ is uniform.

The problem of finding a map $T$ minimizing \eqref{EqMonge}, and such that $\nu = T \# \mu$, was first studied by Monge~\cite{MONGE01} in the 18th century. 
In the 1940s, Kantorovich \cite{KANTOROVICH01} introduced 
the following relaxation of Monge's problem: instead of sending all that is in $x$ to a unique destination $y = T(x)$, he allowed some splitting. Any strategy for sending $\mu$ onto $\nu$ can then be represented by a measure $\gamma$ on $X \times Y$, such that $\gamma(A \times B)$ gives the share of the goods to be moved from $A$ to $B$. A plan $\gamma$ is suitable if it matches the production and the needs, i.e. if
\[\gamma(A \times Y) = \mu(A), \qquad \gamma(X \times B) = \nu(B).\]
This simply means that $\mu$ and $\nu$ must be the marginals of $\gamma$. Let us denote by $\Gamma(\mu,\nu)$ the set of all such suitable plans.  The total cost of transportation with the plan $\gamma $ is
\begin{equation}\label{EqMongeKantorovich}
\int_{X \times Y} c(x,y) \mathrm{d}\gamma(x,y).
 \end{equation}
The Monge--Kantorovich problem consists in finding $\gamma \in \Gamma(\mu,\nu)$ minimizing \eqref{EqMongeKantorovich}. It is indeed a relaxation of the Monge problem, since if $T : X \rightarrow Y$ sends $\mu$ onto $\nu$, then the push-forward $\gamma := (\idmap,T) \# \mu $ of $\mu$ by $x \mapsto (x,T(x))$ is in $\Gamma(\mu,\nu)$, and the costs \eqref{EqMonge} and \eqref{EqMongeKantorovich} are equal.

At the end of the 1980s, Brenier~\cite{MR923203, MR1100809} discovered the optimal transport map for the Monge problem to exist as the gradient of a convex function and to be unique, at least when $X = Y = \SetR^N$, for the cost $c(x,y) = \frac{1}{2}|x-y|^2$, if $\mu$ is absolutely continuous and if $\mu$ and $\nu$ have finite second order moments. His result was then extended to measures defined on the torus $\SetT^N$ by Cordero-Erausquin~\cite{MR1711060}, or more generally on a Riemannian manifold by McCann \cite{MR1844080}. While on $\SetR^N$ the optimal map is $T(x) = \nabla \phi(x)$ with $\phi$ convex, on the torus $\SetT^N$ the optimal map can be written as $T(x) = x - \nabla \psi(x)$ with $\psi : \SetT^N \rightarrow \SetR^N$ such that $\phi : x \mapsto \frac{1}{2}x^2 - \psi(x)$ defines a convex function on $\SetR^N$. More generally, on a Riemannian manifold $T(x) = \exp_x(-\nabla \psi(x))$ for some map $\psi$, called the Kantorovich potential because it is linked to a dual formulation for the relaxed problem.

\medskip

Being able to compute the optimal map $T$, or the underlying potential $\psi$, is obviously of huge interest. When the measures are discrete, if there is a solution to the Monge problem, it can be obtained for instance with the auction algorithm. In the continuous case, the solution is also easy to compute in dimension $1$, for if $\mu$ and $\nu$ are absolutely continuous, and if $F$, $G$ stand for their respective cumulative distributions, i.e. $F(x) := \mu((-\infty,x])$ and $G(y) := \nu((-\infty,y])$,
then the optimal transport map is $T = G^{-1} \circ F$. 

Unfortunately when the dimension is $N > 1$, there is no such easy formula, and it is much more complicated to compute Brenier's map---although not impossible. Among the most notable methods, we could cite the one due to Benamou and Brenier~\cite{MR1738163}, relying on a dynamic formulation of the Monge--Kantorovich problem, in which one tries to minimize the average kinetic energy of the particles during their transportation. On the other hand, Angenent, Haker, and Tannenbaum \cite{MR2001465} proposed a steepest descent method, starting from a transport map (for instance, the Knothe--Rosenblatt rearrangement) and letting it evolve so as to reduce the associated transport cost.
A couple of years later, Leoper and Rapetti~\cite{MR2121899} used the characterization of the optimal transport map through the existence of a convex potential, to compute Brenier's map starting from any potential and, with a Newton algorithm, altering it so as to finally get the optimal potential. 

Our hope here is that the results presented in this paper might lead to yet another approach for computing Brenier's map. Our starting point is a direct connexion, proved by Carlier, Galichon, and Santambrogio~\cite{MR2607321} a few years ago but hinted beforehand by Brenier, between the optimal transport map and the Knothe--Rosenblatt rearrangement.  This leads us to believe it might be possible to compute Brenier's map starting from the rearrangement (as in the paper by Angenent, Haker, and Tannenbaum \cite{MR2001465}), and then proceeding with a continuation method (as in the work by Loeper and Rapetti~\cite{MR2121899}).

\medskip

This so-called Knothe--Rosenblatt rearrangement, which is also built so as to send one measure onto another, was first introduced by Rosenblatt~\cite{MR0049525} and Knothe~\cite{MR0083759}\footnote{Interestingly, Knothe used this rearrangement to prove the isoperimetric inequality, for which it is well suited\dots but in fact not as much as Brenier's map, which Figalli, Maggi, and Pratelli \cite{MR2672283} used more recently to prove sharp isoperimetric inequalities.}.  It can be defined for absolutely continuous probability measures on $\SetR^2$ or on $\SetT^2 = \SetR^2/\SetZ^2$  (in higher dimension, the construction is analogous) as follows: To begin with, let us denote by $f$, $g$ the densities of $\mu$, $\nu$. Then, take the first marginals, which we denote by $\mu^1$and $\nu^1$; their respective densities are
\[ f^1(x_1) = \int f(x_1,x_2) \OpeIntd x_2 \qquad \text{and} \qquad g^1(y_1) = \int g(y_1,y_2)\OpeIntd y_2.\]
Define $R^1$ as the optimal transport map between $\mu^1$ and $\nu^1$. Next, consider the disintegration of $\mu$ and $\nu$ with respect to $\mu^1$ and $\nu^1$, that is to say the two family of probabilities measures $\{ \mu^2_{x_1}\}$ and $\{\nu^2_{y_1}\}$ such that the densities of $\mu^2_{x_1}$ and $\nu^2_{y_1}$ are
\[ f^2_{x_1} (x_2) = \frac{f(x_1,x_2)}{f^1(x_1)} \qquad \text{and} \qquad g^2_{y_1}(y_2) = \frac{g(y_1,y_2)}{g^1(y_1)}.\]
For any $x_1$ let $R^2(x_1,\cdot)$ be the optimal transport map between $\mu^2_{x_1}$ and $\nu^2_{R^1(x_1)}$. Then the rearrangement is $R(x_1,x_2) := (R^1(x_1),R^2(x_1,x_2))$. It is not difficult to check that it sends $\mu$ onto $\nu$.

What Carlier, Galichon and Santambrogio proved is that, if in Monge's problem the cost is, for instance, replaced with
\[ c_t(x,y) = \frac{1}{2} \sum_{k=1}^d t^{k-1} |x_k-y_k|^2,\]
then, when the two measures are absolutely continuous, as $t$ goes to $0$, the corresponding optimal transport maps $T_t$ converge in $L^2$ to the rearrangement $R$. When the initial measure $\mu$ is uniform and the final measure $\nu$ is discrete, $\nu = \sum a_i \delta_{y_i}$, they could also establish an
\textsc{ode} governing the evolution of the Kantorovich potential $\psi_t$, at least when the first coordinates of the $y_i$ are distinct.

Thus, the following questions arise: in the continuous case, is it also possible to find a differential equation satisfied by $\psi_t$? and if the answer is yes, is there uniqueness, that is to say, given the proper initial condition for $t=0$, is $\psi_t$ the only solution to this equation? As we are going to see, the answer to both question is positive, at least, to discard boundary issues, on the torus. More precisely, we have the following:

\begin{theorem*}
Let $A_t$ be the $(1,t,\ldots,t^{N-1})$ diagonal matrix, and $\mu, \nu$ two probability measures on $\SetT^N = \SetR^N/\SetZ^N$ with smooth, strictly positive densities $f,g$. The optimal transport map for the cost
\[ c_t(x,y) = \frac{1}{2} \sum_{k=1}^N t^{k-1} d(x_k,y_k)^2,\]
$d$ standing for the usual distance on $\SetT^1$, is then $T_t(x) = x-A_t^{-1}\nabla \psi_t(x)$, where the Kantorovich potential $\psi_t$ is chosen so that $\int \psi_t = 0$. The map $t \mapsto \psi_t$ is smooth from $(0,+\infty)$ to $\SetCl^\infty(\SetT^N)$, 
with $A_t - D^2 \psi_t > 0$ at all times, and satifies
\begin{equation}\label{EqIntro}
 \OpeDiv\left( f \left[ \idmatrix - A_t^{-1}D^2 \psi_t \right]^{-1} \left( A_t^{-1}\nabla \dot{\psi}_t - A_t^{-1} \dot{A}_t A^{-1}_t \nabla \psi_t \right)\right) = 0.
 \end{equation}
Moreover, $\psi_t$ is the unique solution of \eqref{EqIntro} such that, if we write for $t \neq 0$,
\begin{equation}\label{EqDecomp}
\psi_t(x_1,\ldots,x_N) = \psi_t^1(x_1) +  t\psi_t^2(x_1,x_2) + \ldots + t^{N-1} \psi^N_t(x_1,\ldots,x_N),
\end{equation}
with 
\[ \forall x_1, \ldots, x_{k-1}, \qquad \int \psi^k_t(x_1,\ldots,x_{k-1},z) \OpeIntd z = 0,\]
then
$t  \mapsto (\psi^1_t,\ldots,\psi^N_t)$ 
is $\SetCl^2$ on $[0,\infty)$, and the Knothe--Rosenblatt rearrangement $R=(R^1,\ldots,R^N)$ is given by
\[ R^k(x_1,\ldots,x_k) = x_k - \partial_k \psi^k_0(x_1,\ldots,x_k).\]
\end{theorem*}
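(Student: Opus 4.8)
The plan is to run a continuation (homotopy) argument in $t$, using the characterization of $\psi_t$ as the solution of the Monge–Ampère-type equation coming from the push-forward condition $T_t \# \mu = \nu$. First I would write that condition explicitly: $T_t(x) = x - A_t^{-1}\nabla\psi_t(x)$ pushes $\mu$ to $\nu$ iff
\[
 f(x) = g(T_t(x))\,\det\!\bigl(\idmatrix - A_t^{-1}D^2\psi_t(x)\bigr),
\]
together with the constraint $\int\psi_t = 0$ and the convexity constraint $A_t - D^2\psi_t > 0$. Call this nonlinear equation $\FirstOperator(t,\psi) = 0$. Equation~\eqref{EqIntro} is obtained simply by differentiating $\FirstOperator(t,\psi_t) = 0$ in $t$ and rewriting the result in divergence form; this is the routine ``differentiate the constraint'' step, and the main content is the existence, smoothness, and uniqueness of $\psi_t$, plus the delicate behaviour as $t \to 0^+$.

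For $t$ in any compact subinterval of $(0,\infty)$ the operator $A_t$ is uniformly positive definite, so the linearization of $\FirstOperator$ at $\psi_t$ is (up to the mean-zero normalization) a uniformly elliptic second-order operator in divergence form with smooth coefficients; by elliptic regularity and the Fredholm alternative on $\SetT^N$ it is an isomorphism between the appropriate mean-zero $\SetCl^{k+2,\alpha}$ and $\SetCl^{k,\alpha}$ spaces (or Sobolev spaces). Hence the implicit function theorem gives local existence and smoothness of $t \mapsto \psi_t$ near any $t_0 > 0$, and a priori estimates (from the classical regularity theory for Brenier's map on the torus, plus strict positivity of $f,g$) prevent the convexity constraint from degenerating, so the solution extends to all of $(0,\infty)$ and is smooth there; uniqueness on $(0,\infty)$ follows from uniqueness of the optimal map for each fixed $t$ (Cordero-Erausquin).

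The hard part is the behaviour at $t = 0$, where the cost $c_t$ degenerates and $A_t$ becomes singular, so the above ellipticity is lost. Here I would use the layered structure~\eqref{EqDecomp}: substituting the ansatz $\psi_t = \sum_k t^{k-1}\psi_t^k$ with the slicewise mean-zero normalization into $\FirstOperator(t,\psi_t)=0$ and separating the equation by the induced triangular structure (the first-coordinate marginal equation is an honest 1D Monge–Ampère problem independent of $t$, the next slice sees $\psi_t^1$ as data, and so on), one recovers at $t=0$ exactly the system of one-dimensional optimal transport problems that defines the Knothe–Rosenblatt maps $R^k(x_1,\ldots,x_k) = x_k - \partial_k\psi_0^k$. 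The technical obstacle, and the reason the Nash–Moser theorem is invoked rather than the ordinary implicit function theorem, is that the rescaled map $(t,(\psi^1,\ldots,\psi^N)) \mapsto \FirstOperator$ is differentiable as a map between Fréchet spaces $\SetCl^\infty$ but its linearization loses derivatives (the inverse of the linearized operator, being triangular with the degenerate directions, is only a tame operator, not bounded between fixed $\SetCl^k$ spaces): one must verify the tame-estimate hypotheses of the Nash–Moser inverse function theorem for this rescaled operator at $t=0$. Granting that, Nash–Moser yields a $\SetCl^\infty$ (in particular $\SetCl^2$) solution $t \mapsto (\psi_t^1,\ldots,\psi_t^N)$ on a neighbourhood of $0$, which glues to the $(0,\infty)$ solution and is unique by the same marginal-by-marginal argument; reading off the $t=0$ values gives the stated formula for $R$.

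Throughout, the key auxiliary inputs I would assume are: Cordero-Erausquin's theorem on existence/uniqueness of the optimal map on $\SetT^N$ for each fixed $t>0$ (so that $\psi_t$ is well-defined and equals the unique mean-zero Kantorovich potential), the interior regularity theory for the Monge–Ampère equation with smooth positive densities (Caffarelli-type estimates, adapted to the torus) to get the a priori bounds that keep $A_t - D^2\psi_t$ bounded below, and Hamilton's formulation of the Nash–Moser theorem in the category of tame Fréchet spaces. The main obstacle is squarely the verification of tameness of the inverse of the linearized operator at $t=0$ — equivalently, showing that the triangular elliptic-in-each-slice structure produces exactly the derivative loss Nash–Moser can absorb, uniformly down to $t=0$.
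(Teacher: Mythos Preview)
Your proposal is correct and follows essentially the same route as the paper: characterize $\psi_t$ via the Monge--Amp\`ere equation $\FirstOperator(A_t,\psi_t)=0$, differentiate to obtain \eqref{EqIntro}, use the ordinary implicit function theorem with elliptic regularity for $t>0$, and for $t\to 0$ pass to the rescaled operator $\SecondOperator(t,u^1,\ldots,u^N)=\FirstOperator(A_t,\sum t^{k-1}u^k)$ and invoke Nash--Moser after verifying tame invertibility of $D_u\SecondOperator$. The only cosmetic difference is that for $t>0$ the paper does not run a continuation argument with a priori bounds: since $\psi_t$ is already known to exist for every $t>0$ as the Kantorovich potential (Proposition~\ref{ProKantorovich}), one simply applies the implicit function theorem near each $t_0>0$ to get smooth dependence, with global existence and the convexity constraint coming for free.
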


The first point is obtained by noticing that,  at least when $t$ stays away from $0$, $\psi_t$ is the unique solution to a Monge--Ampère equation $\FirstOperator(A_t,\psi_t) = 0$, where
\[ \FirstOperator(A,u)(x) := f(x) - g\left(x - A^{-1} \nabla u(x) \right) \det\left(\idmatrix - A^{-1} D^2 u(x) \right),\]
is defined on a proper subset of $\SetSymmetric^{++}_N \times \SetCl^2(\SetT^N)$,
and then proving that we can apply the implicit function theorem. As it is well-known, the invertibility of the differential $D_u \FirstOperator$ in $(A_t,\psi_t)$ is equivalent to the existence and uniqueness of the solution to a strictly elliptic equation, so the argument is rather straightforward.

For small times, because of the degeneracy of $A_t$, we need the decomposition~\eqref{EqDecomp}, which leads us to introduce another operator, namely:
\[\SecondOperator(t,u^1,u^2,\ldots,u^N):=\FirstOperator\left(A_t, \sum t^{k-1} u^k \right),\] 
defined on a good subset of $[0,+\infty) \times \SetCl^2(\SetT^1) \times \SetCl^2(\SetT^2) \times \cdots \times \SetCl^2(\SetT^N)$, 
in such a way that $(\psi^1_t,\ldots,\psi^N_t)$ is the only $(u^1,\ldots,u^N)$ such that $\SecondOperator(t,u^1,\ldots,u^N)=0$. Unfortunately, a loss of regularity for the solutions $(v^1,\ldots,v^N)$ of the equation
\[D_u \SecondOperator(t,\psi^1,\ldots,\psi^N)(v^1,\ldots,v^N) = q\]
prevents us from applying the implicit function theorem once more. We circumvent this difficulty by using the smoothness of the Kantorovich potential $\psi_t$, which allows us to  define $\mathcal{G}$ on a subset of $[0,+\infty) \times \SetCl^\infty(\SetT^1) \times \cdots \times \SetCl^\infty(\SetT^N)$, so that to have an infinite source of regularity, and then use the Nash--Moser version of the implicit function theorem. 

\medskip

We do not know if there is an equivalent result on $\SetR^N$. To be able to construct the Knothe rearrangement, compactness is required, but in $\SetR^N$ this comes with a boundary. The problem is that the rearrangement is more easily contented with sets whose shapes are somewhat compatible with the axes, e.g. the square, but known regularity results for Brenier's map fail to apply in that kind of setting.

%
%

\paragraph*{Acknowledgements}
This work is part of a \textsc{p}h\textsc{d} thesis supervised by Luigi Ambrosio (\textsc{sns}, Pisa) and Filippo Santambrogio (Univ. Paris--Sud), whom the author would like to thank warmly for their advice and strong support.
Financial support is provided in part by a “Vinci” grant from the Franco--Italian University.
Much of this paper is also the result of an extended stay in Pisa in Fall 2011, which was made possible thanks to the \textsc{ens}--\textsc{sns} exchange program.

%
%

\section{General quadratic costs on the torus}

Given two probability measures $\mu$, $\nu$ on the torus $\SetT^N = \SetR^N/\SetZ^N$, we want to study the evolution with $t$ of the optimal transport map for the cost
\begin{equation}\label{EqCostDefinition}
 c_t(x,y) = \frac{1}{2} \sum_{k=1}^N \left( \prod_{i < k} \lambda_i(t) \right) d(x_k,y_k)^2,
 \end{equation}
where $d : \SetT^1 \times \SetT^1 \rightarrow [0,+\infty)$ is the usual distance on $\SetT^1$, the $\lambda_i : \SetR \rightarrow [0,+\infty)$ are smooth and such that $\lambda_i(t)=0$ if and only if $t=0$.  For $t>0$, this is a kind of quadratic cost on the torus.
Notice that, more generally, we can define a cost given any positive-definite symmetric matrix $A \in \SetSymmetric_N^{++}$ as follow: first, consider $\tilde{c} : \SetR^N \times \SetR^N \rightarrow [0,+\infty)$ defined by
\[ \tilde{c}(x,y)^2 := \inf_{k \in \SetZ^N}  \frac{1}{2} A(x-y-k)^2, \] 
where $Az^2$ is a convenient shorthand for $\langle Az, z \rangle$, and then take the induced map $c : \SetT^N \times \SetT^N \rightarrow [0,+\infty)$. This is equivalent to changing the usual metric on $\SetT^N$ with the one induced by $A$ in the canonical set of coordinates, and then taking half the resulting squared distance as the cost.

An interesting property of such a cost $c$ is that in this case the so-called $c$-transform of a function $u : \SetT^N \rightarrow \SetR$ is strongly connected to the Legendre transform (for the scalar product induced by $A$) of $x \mapsto \frac{1}{2}Ax^2 - u(x)$, defined on $\SetR^N$ (we then see $u$ as a periodic function on $ \SetR^N$).  Let us recall that the $c$-transform of $u$ is the map $u^c : \SetT^N \rightarrow \SetR$ defined by
\[ u^c(y) = \inf_{x \in \SetT^N} \left\{ c(x,y) - u(x) \right\}.\]
This is interesting, because McCann~\cite{MR1844080} showed that, under suitable assumptions, the optimal transport map $T$ can be written as $T(x) = \exp_x(-\nabla \psi(x))$, for some function $\psi$ such that $\psi^{cc} = \psi$. A map $u$ such that $u^{cc} = u$ is called $c$-concave.

\begin{lemma}\label{LemCConcave}
A function $u : \SetT^N \rightarrow \SetR$ is $c$-concave if and only if
\[ v: \left\{ 	\begin{array}{ccl}
			\SetR^N & \rightarrow & \SetR \\
			x & \mapsto & \frac{1}{2}Ax^2 - u(x)
		\end{array}	\right.\]
is convex and lower semi-continuous. If $u$ is $\SetCl^2$ and such that $A - D^2u > 0$, then $x \mapsto x - A^{-1}\nabla u(x)$ induces a diffeomorphism $\SetT^N \rightarrow \SetT^N$.
\end{lemma}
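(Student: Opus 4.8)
The plan is to reduce both assertions to classical facts about convex functions on $\SetR^N$, via the correspondence between the $c$-transform on $\SetT^N$ and the Legendre transform.

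\textbf{First equivalence.} Viewing $u$ as a $\SetZ^N$-periodic function on $\SetR^N$ and setting $v(x) = \frac12 Ax^2 - u(x)$, I would first note that since $c(x,y) = \inf_{k\in\SetZ^N}\frac12 A(x-y-k)^2$ and $u$ is periodic, the infimum defining $u^c$ over $\SetT^N$ together with the infimum over $k$ collapses to an infimum over all of $\SetR^N$:
\[ u^c(y) = \inf_{x\in\SetR^N}\Bigl\{\tfrac12 A(x-y)^2 - u(x)\Bigr\} = \tfrac12 Ay^2 - \sup_{x\in\SetR^N}\bigl\{\langle Ay, x\rangle - v(x)\bigr\} = \tfrac12 Ay^2 - v^*(Ay), \]
where $v^*$ denotes the usual convex conjugate. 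Thus, writing $L$ for the operation $w \mapsto w^*(A\,\cdot\,)$, passing from $u$ to $u^c$ amounts to passing from $v$ to $Lv$; I would then check that $Lv$ is again periodic after subtraction of the quadratic (so the dictionary is self-consistent), and that, using the substitution $z = Ax$, one has $L(Lv) = v^{**}$. Hence $u^{cc} = u$ is equivalent to $v = v^{**}$, and by the Fenchel--Moreau theorem this holds exactly when $v$ is convex and lower semicontinuous. One must observe in passing that the hypotheses of Fenchel--Moreau are met: $v$ is real-valued hence proper, and it is coercive because a $c$-concave $u$, being an infimum of continuous functions on the compact set $\SetT^N$, is bounded (and conversely, if $v$ is convex l.s.c. then, being real-valued, it is finite and continuous, so $u$ is periodic and continuous, hence bounded).

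\textbf{Second claim.} Assuming $u\in\SetCl^2$ with $A - D^2u > 0$, the function $v = \frac12 Ax^2 - u$ is $\SetCl^2$ with $D^2 v = A - D^2 u > 0$, hence strictly convex, and $v(x)\ge \frac12\lambda_{\min}(A)|x|^2 - \max u$, hence coercive. Strict convexity gives injectivity of $\nabla v$; coercivity gives surjectivity (for each $p$, the coercive $\SetCl^1$ map $x\mapsto v(x)-\langle p,x\rangle$ attains a minimum, at which its gradient equals $p$); and $D(\nabla v) = D^2 v$ is everywhere invertible, so $\nabla v$ is a $\SetCl^1$ diffeomorphism of $\SetR^N$. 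Periodicity of $u$ yields $\nabla v(x+k) = \nabla v(x) + Ak$ for $k\in\SetZ^N$, so the map $S := A^{-1}\nabla v = \idmap - A^{-1}\nabla u$ satisfies $S(x+k) = S(x)+k$ and therefore descends to $\SetT^N$; being induced by a diffeomorphism of $\SetR^N$ that commutes with the $\SetZ^N$-action, it induces a diffeomorphism $\SetT^N\to\SetT^N$. (Equivalently, $\det DS = \det A^{-1}\det(A-D^2u)>0$ shows $S$ is a local diffeomorphism, and $S(x+k)=S(x)+k$ together with injectivity on $\SetR^N$ forces injectivity on $\SetT^N$, which for a local diffeomorphism of a compact connected manifold already gives a diffeomorphism.)

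The routine parts are the determinant and substitution computations and the verification that $v$ is proper and coercive. The step requiring the most care is the bookkeeping in the first part — checking that the $c$-transform is exactly the Legendre transform conjugated by $A$ and that periodicity is preserved under each application of $L$, so that $u\mapsto u^{cc}$ really corresponds to $v\mapsto v^{**}$; the only other point to handle carefully is the descent from global invertibility on $\SetR^N$ to invertibility on the quotient torus in the second part.
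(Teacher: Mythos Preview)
Your proposal is correct and follows essentially the same approach as the paper: both reduce the $c$-transform to the Legendre transform (the paper via the $A$-inner-product conjugate, you via the ordinary conjugate composed with $A$, which is the same thing after the substitution you indicate) and invoke Fenchel--Moreau, and both argue that $\nabla v$ is a global diffeomorphism of $\SetR^N$ commuting with the $\SetZ^N$-action, hence descends. Your write-up is slightly more explicit about the dictionary $u\leftrightarrow v$, about injectivity/surjectivity of $\nabla v$, and about the descent to the quotient, whereas the paper simply notes compactness gives $A-D^2u\ge\epsilon I$ and appeals to ``convex with super-linear growth'' for the diffeomorphism claim; but there is no real difference in strategy.
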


\begin{proof}
If $u$ is $c$-concave, then $v$ is convex and lower semi-continuous, for it can be written as a Legendre transform:
\begin{align*}
 v(x) 		& = \frac{1}{2}Ax^2 - u^{cc}(x) \\
 		& = \frac{1}{2}Ax^2 - \inf_{y \in \SetT^N} \left\{ c(x,y) - u^c(y) \right\} \\
		& = \sup_{y \in \SetR^N} \left\{ \frac{1}{2}Ax^2 - \tilde{c}(x,y) + u^{c}(y)\right\} \\
		& = \sup_{y \in \SetR^N} \sup_{k \in \SetZ^2} \left\{ \frac{1}{2}Ax^2 - \frac{1}{2}A(x-y-k)^2 + u^{c}(y)\right\} \\
		& = \sup_{y \in \SetR^N} \left\{ \langle A x, y \rangle - \left[ \frac{1}{2} A y^2 - u^{c}(y)\right] \right\}.
\end{align*}
Conversely, if $v$ is convex and lower semi-continuous, then it is equal to its double $A$-Legendre transform:
\[ v(x) = \sup_{y \in \SetR^N} \left\{ \langle Ax, y \rangle - \sup_{z \in \SetR^N} \left[ \langle Az, y \rangle - v(z) \right] \right\}.\]
Therefore,
\begin{align*}
 u(x) & = \frac{1}{2} Ax^2 -  \sup_{y \in \SetR^N} \left\{ \langle Ax, y \rangle - \sup_{z \in \SetR^2} \left[ \langle Az, y \rangle - v(z) \right] \right\} \\
 		& = \inf_{y \in \SetR^N} \left\{ \frac{1}{2} A(x-y)^2 -  \frac{1}{2}Ay^2 + \sup_{z \in \SetR^2} \left[ \langle Az, y \rangle - v(z) \right] \right\} \\
		& =  \inf_{y \in \SetR^N} \left\{ \frac{1}{2} A(x-y)^2 - \inf_{z \in \SetR^2} \left[ \frac{1}{2}A(y-z)^2 - u(z) \right] \right\},
\end{align*}
that is to say $u(x)=u^{cc}(x)$.

If $u$ is $\SetCl^2$ and such that $A - D^2 u > 0$, then by compactness $A - D^2 u \geq \epsilon \idmatrix$ for some $\epsilon > 0$. Thus, $v$ being convex with a super-linear growth,  $\nabla v : \SetR^N \rightarrow \SetR^N$ is a diffeomorphism, and so is the map $T:x \mapsto x - A^{-1}\nabla u(x)$. Notice that, if $k \in \SetZ^N$, then $T(x+k) = T(x) + k$, therefore $T$ induces a diffeomorphism $\SetT^N \rightarrow \SetT^N$.
\end{proof}

In the next proposition, we start from the existence and uniqueness of the Kantorovich potential for such a generalized cost (this comes from McCann~\cite{MR1844080}), and then apply the results of Caffarelli \cite{MR1124980} to get its smoothness, in the exact same way as Cordero-Erausquin~\cite{MR1711060} did.  More general results regarding the regularity of the potential, and thus, of the optimal transport map, on arbirary products of spheres have been recently obtained by Figalli, Kim, and McCann~\cite{FKMC2011}.

\begin{proposition}\label{ProKantorovich}
  Let $\mu$ and $\nu$ be two probability measures on $\SetT^N$ with smooth, strictly positive densities, and let $c$ be the quadratic cost on $\SetT^N \times \SetT^N$ induced by a definite-positive symmetric matrix $A$.

  Then there is a unique $c$-concave function $\psi : \SetT^N \rightarrow \SetR$ with $\int \psi = 0$ such that $T : \SetT^N \rightarrow \SetT^N$ defined by $T(x) := x - A^{-1}\nabla \psi(x)$ sends $\mu$ onto $\nu$.
  
  The function $\psi$ is a Kantorovich potential, it is smooth, and the application $\phi : x \mapsto \frac{1}{2}A x^2 - \psi(x)$ is a smooth, strictly convex function on $\SetR^N$. 
  
  The transport map $T$ is optimal for the cost $c$.  There is no other optimal transport plan but the one it induces.
\end{proposition}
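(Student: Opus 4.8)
The strategy is to read off existence, uniqueness and optimality from McCann's theorem on compact Riemannian manifolds, and then to obtain smoothness and strict convexity from Caffarelli's regularity theory for the Monge--Amp\`ere equation, exactly along the lines of Cordero-Erausquin's treatment of the periodic case~\cite{MR1711060}. First I would recognise the cost $c$ for what it is: $c(x,y) = \tfrac12 d_A(x,y)^2$, where $d_A$ is the flat Riemannian distance on $\SetT^N$ attached to the constant metric $A$. Since this metric is flat, geodesics are straight segments, $\exp_x(w) = x + w$, and the Riemannian gradient of a function $u$ is $A^{-1}\nabla u$. Thus McCann's theorem~\cite{MR1844080} applies and yields a $c$-concave potential $\psi$, unique up to an additive constant — which I fix by imposing $\int\psi = 0$ — such that $T(x) := \exp_x\bigl(-A^{-1}\nabla\psi(x)\bigr) = x - A^{-1}\nabla\psi(x)$ is the $\mu$-essentially unique optimal map, and the plan it induces is the unique minimiser of the Monge--Kantorovich problem for $c$. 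Uniqueness of $\psi$ among zero-mean $c$-concave functions is then immediate, since $T$ determines $\nabla\psi$ and hence $\psi$; and Lemma~\ref{LemCConcave} turns the $c$-concavity of $\psi$ into the convexity (and lower semicontinuity) of $\phi : x\mapsto \tfrac12 A x^2 - \psi(x)$ on $\SetR^N$.

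The heart of the matter is regularity. I would lift everything to the universal cover $\SetR^N$, where $f$ and $g$ become $\SetZ^N$-periodic smooth densities bounded between two positive constants, $\psi$ a periodic function, and $\phi$ a convex function with $\nabla\phi(x) = Ax - \nabla\psi(x)$, so that $T = A^{-1}\circ\nabla\phi$. Writing the constraint $T\#\mu = \nu$ through the change of variables $y = T(x)$ shows that $\phi$ is an Alexandrov solution of the Monge--Amp\`ere equation
\[ \det D^2\phi(x) = \frac{(\det A)\, f(x)}{g\bigl(A^{-1}\nabla\phi(x)\bigr)},\]
whose right-hand side lies between two positive constants. Since $\nabla\phi$ maps $\SetR^N$ onto $\SetR^N$ (a convex set) — being a proper, monotone perturbation of $x\mapsto Ax$ — this places us within the scope of Caffarelli's estimates~\cite{MR1124980}: $\phi$ is strictly convex and $\SetCl^{1,\alpha}_{\mathrm{loc}}$, and then, the right-hand side being smooth, a Schauder bootstrap upgrades $\phi$, hence $\psi$, to $\SetCl^\infty$. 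Strict convexity means $D^2\phi > 0$, that is, $A - D^2\psi > 0$ on $\SetT^N$, whence $A - D^2\psi \geq \epsilon\idmatrix$ for some $\epsilon > 0$ by compactness. This is nothing but Cordero-Erausquin's argument, with the constant matrix $A$ in place of the identity.

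It then remains to collect the conclusions. With $\psi\in\SetCl^\infty(\SetT^N)$ $c$-concave and $A - D^2\psi > 0$, the second half of Lemma~\ref{LemCConcave} gives that $T : x\mapsto x - A^{-1}\nabla\psi(x)$ induces a diffeomorphism of $\SetT^N$, while $\phi$ is a smooth, strictly convex function on $\SetR^N$; optimality of $T$ and uniqueness of the optimal plan were already part of McCann's statement invoked above. I expect the main obstacle to be the regularity step, and inside it the strict convexity of the Alexandrov solution $\phi$: this is the one genuinely non-formal input — without it the equation could degenerate and the Schauder iteration would not even start — and the delicate point is to check that the global, periodic geometry (in particular that $\nabla\phi(\SetR^N) = \SetR^N$ is convex) really does fall under Caffarelli's hypotheses, which is exactly what Cordero-Erausquin verified. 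Everything else — the change of variables producing the Monge--Amp\`ere identity, the Schauder bootstrap, and the passage from $A - D^2\psi > 0$ to the diffeomorphism property via Lemma~\ref{LemCConcave} — is routine.
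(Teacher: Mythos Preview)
Your proposal is correct and follows essentially the same route as the paper: McCann's theorem for existence, uniqueness and optimality of the $c$-concave potential, Lemma~\ref{LemCConcave} for the convexity of $\phi$, and then Caffarelli's regularity theory \`a la Cordero-Erausquin for smoothness and strict convexity. The only cosmetic difference is that the paper localises---taking an open convex $V\subset\SetR^N$, setting $U=(\nabla\phi)^{-1}(V)$, and applying Caffarelli to the pair $(\mu|_U,\,A\#\nu|_V)$---whereas you invoke the convexity of $\nabla\phi(\SetR^N)=\SetR^N$ globally; both are legitimate ways of placing oneself under Caffarelli's hypotheses.
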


Of course in this proposition, instead of $T(x) := x - A^{-1}\nabla \psi(x)$ we should have written $T(x) = x - \pi(A^{-1}\nabla \psi(x))$, where $\pi : \SetR^N \rightarrow \SetT^N$ is the usual projection. 

\begin{proof}
Let us denote by $\OpeNablaSub{A}$ the gradient for the metric induced by $A$.
Then according to McCann \cite{MR1844080}, there is a Lipschitz function $\psi : \SetT^N \rightarrow \SetR$ that is $c$-concave
and such that $T:x \mapsto \exp_x[-\OpeNablaSub{A} \psi(x)]$ pushes $\mu$ forward to $\nu$. It is uniquely defined if the condition $\int \psi(x) \OpeIntd x = 0$ is added, and moreover it is optimal for the Monge--Kantorovich problem. Notice that  here on the torus, $\exp_x [-\OpeNablaSub{A} \psi(x)] = x - A^{-1} \nabla \psi(x)$. 

For any $x \in \SetR^N$, let $\phi(x) := \frac{1}{2}A x^2 - \psi(x)$. Then $T(x)=A^{-1} \nabla \phi(x)$ sends $\mu$ onto $\nu$, seen as periodic measures on $\SetR^N$. 
Moreover, according to Lemma~\ref{LemCConcave}, $\phi$ is a convex function. Now, let $V$ be a open, convex subset of $\SetR^N$, and define $U = (\nabla \phi)^{-1}(V)$; then $\nabla \phi$ sends $\mu{|_U}$ onto $A \# \nu{|_V}$, and both measures are still absolutely continuous with smooth, bounded, strictly positive densities. Therefore we are entitled to apply the results of Caffarelli \cite{MR1124980}, and thus we get that $\phi$ is strictly convex and smooth on $U$. As $U$ is arbitrary, $\phi$ is strictly convex and smooth on $\SetR^N$. Thus, $\psi$ is also smooth, and $T$ is a diffeomorphism.
 \end{proof}


\section{PDE satisfied for positive times}\label{SecNonDegenerate}

Let $\mu$ and $\nu$ be two probability measures on $\SetT^N$ with smooth, strictly positive densities $f$ and $g$. According to Proposition \ref{ProKantorovich}, for any $A \in \SetSymmetric_N^{++}$, we have a smooth Kantorovich potential $\OpeKantorovich{A} : \SetT^N \rightarrow \SetR$. What can we say of the regularity of $\Psi : A \mapsto \OpeKantorovich{A}$?

As $x \mapsto x - A^{-1}\nabla \OpeKantorovich{A}(x)$ sends $\mu$ onto $\nu$, the following Monge--Ampère equation is satisfied:
\[ f(x) = g\left(x - A^{-1}\nabla \OpeKantorovich{A}(x) \right) \det\left(\idmatrix - A^{-1}D^2 \OpeKantorovich{A}\right).\]
For $u \in \SetCl^2(\SetT^N)$ such that $A - D^2 u > 0$ and $\int u = 0$, we set
\[ \FirstOperator(A,u) = f - g\left(\idmap - A^{-1} \nabla u\right) \det\left(\idmatrix - A^{-1} D^2 u\right). \]
Thanks to the characterization of $c$-concave functions from Lemma \ref{LemCConcave}, and to Proposition \ref{ProKantorovich}, we have

\begin{lemma}\label{LemCharacterizationAPriori1}
For any $u \in \SetCl^2(\SetT^N)$ such that $A - D^2 u > 0$ and $\int u = 0$, we have $\FirstOperator(A,u) = 0$ if and only if $u = \Psi_A$.
\end{lemma}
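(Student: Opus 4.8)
The plan is to establish the equivalence between the Monge--Ampère equation $\FirstOperator(A,u)=0$ and the identity $u=\Psi_A$ by reading off the change-of-variables formula in both directions. First I would prove the forward implication: if $u=\Psi_A$, then by Proposition~\ref{ProKantorovich} the map $T(x)=x-A^{-1}\nabla u(x)$ is a diffeomorphism of $\SetT^N$ pushing $\mu$ onto $\nu$, so writing $T\#\mu=\nu$ in terms of densities and using that $DT = \idmatrix - A^{-1}D^2u$ is positive definite (hence has positive determinant), the standard change-of-variables formula yields $f(x) = g(T(x))\det\bigl(\idmatrix - A^{-1}D^2u(x)\bigr)$, which is exactly $\FirstOperator(A,u)=0$.

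For the converse, suppose $u\in\SetCl^2(\SetT^N)$ satisfies $A-D^2u>0$, $\int u=0$, and $\FirstOperator(A,u)=0$. By Lemma~\ref{LemCConcave}, the hypothesis $A-D^2u>0$ already guarantees that $u$ is $c$-concave and that $T:x\mapsto x-A^{-1}\nabla u(x)$ induces a diffeomorphism $\SetT^N\rightarrow\SetT^N$. The equation $\FirstOperator(A,u)=0$ says precisely that $f = (g\circ T)\,\det DT$ with $\det DT>0$, so for any test set the change-of-variables formula gives $\mu(T^{-1}(B)) = \nu(B)$, i.e.\ $T\#\mu = \nu$. Thus $u$ is a $c$-concave function with zero mean whose associated map $T(x)=x-A^{-1}\nabla u(x)$ transports $\mu$ to $\nu$; by the uniqueness clause in Proposition~\ref{ProKantorovich}, $u$ must coincide with $\Psi_A$.

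I do not expect a serious obstacle here, since both directions are essentially the change-of-variables formula combined with results already established: the main point is simply to check that the regularity and positivity hypotheses in the statement line up with the hypotheses of Lemma~\ref{LemCConcave} (to get that $T$ is a genuine diffeomorphism of the torus, so the push-forward computation is legitimate) and of Proposition~\ref{ProKantorovich} (to invoke uniqueness of the $c$-concave potential with prescribed mean). The only mild care needed is that $T$ is defined as a map on $\SetT^N$ via $x\mapsto x-\pi(A^{-1}\nabla u(x))$, so that periodicity of $\nabla u$ makes $T$ well defined and $\det DT$ is a well-defined positive function on the torus; once that is noted, the equivalence is immediate.
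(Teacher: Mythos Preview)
Your proposal is correct and follows exactly the route the paper intends: the paper does not spell out a proof but simply cites Lemma~\ref{LemCConcave} (to get $c$-concavity of $u$ and that $T$ is a diffeomorphism) and Proposition~\ref{ProKantorovich} (for the uniqueness of the $c$-concave zero-mean potential whose associated map transports $\mu$ to $\nu$), which is precisely what you unpack. One minor wording point: $DT=\idmatrix-A^{-1}D^2u$ is not symmetric, so ``positive definite'' should be replaced by the observation that it is conjugate to $A^{-1/2}(A-D^2u)A^{-1/2}>0$, hence has positive determinant---but this does not affect your argument.
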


We are now going to prove that we can apply the implicit function theorem.

\medskip

In the following, for any function space $X$ we denote with a $\NotZeroMeanValue$ subscript the space formed by the elements of $X$ having a zero mean value, e.g. $\SetCl^2_\NotZeroMeanValue$ is the space of all $u \in \SetCl^2$ such that $\int u = 0$. 

\begin{lemma}\label{LemDerivativeFirstOperator}
The operator $\mathcal{F}$ is smooth. For any $A \in \SetSymmetric_N^{++}$, if $u \in \SetCl^2_\NotZeroMeanValue(\SetT^N)$ is such that $A - D^2 u > 0$, if $v \in \SetCl^2_\NotZeroMeanValue(\SetT^2)$, then
\begin{align*}
 D_u \FirstOperator(A,u)v 		& 	=  	 \OpeDiv \left(\left(f-\FirstOperator(A,u)\right) \left[A - D^2  u\right]^{-1} \nabla v\right) \\ 
 						& 	= 	\frac{1}{\det A}\OpeDiv \left(g\left(\idmap - A^{-1} \nabla u\right) \OpeTranspose{\left[\OpeComatrix{\left(A - D^2 u\right)}\right]} \nabla v\right).
 \end{align*}
\end{lemma}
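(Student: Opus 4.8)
The plan is to compute the Gâteaux derivative of $\FirstOperator$ directly and then identify the two expressions. First I would note that $\FirstOperator$ is built from the smooth maps $u \mapsto \nabla u$, $u \mapsto D^2 u$, composition with the smooth function $g$, the (smooth on its open domain) matrix operations $M \mapsto M^{-1}$ and $M \mapsto \det M$, and pointwise multiplication; since these are all smooth between the relevant Banach (or Fréchet) spaces, $\FirstOperator$ is smooth, which disposes of the first sentence. For the formula, write $\FirstOperator(A,u) = f - g(\idmap - A^{-1}\nabla u)\det(\idmatrix - A^{-1}D^2 u)$ and differentiate in the direction $v$: the product rule gives one term from differentiating $g(\idmap - A^{-1}\nabla u)$, namely $-\langle \nabla g(\idmap - A^{-1}\nabla u), -A^{-1}\nabla v\rangle \det(\idmatrix - A^{-1}D^2 u)$, and one term from differentiating the determinant, namely $-g(\idmap - A^{-1}\nabla u)\,\mathrm{D}_M\det(\idmatrix - A^{-1}D^2 u)\cdot(-A^{-1}D^2 v)$. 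Using Jacobi's formula $\mathrm{D}_M\det(M)\cdot H = \det(M)\OpeTrace(M^{-1}H)$, with $M = \idmatrix - A^{-1}D^2 u$ and $H = -A^{-1}D^2 v$, the second term becomes $g(\idmap - A^{-1}\nabla u)\det(\idmatrix - A^{-1}D^2 u)\OpeTrace\!\big((\idmatrix - A^{-1}D^2 u)^{-1} A^{-1} D^2 v\big)$.

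The key step is then to recognize the sum of these two terms as a divergence. Set $h := g(\idmap - A^{-1}\nabla u)\det(\idmatrix - A^{-1}D^2 u) = f - \FirstOperator(A,u)$. Expanding $\OpeDiv\big(h\,[A - D^2 u]^{-1}\nabla v\big)$ by the product rule yields $\langle \nabla h, [A - D^2 u]^{-1}\nabla v\rangle + h\,\OpeDiv\big([A-D^2 u]^{-1}\nabla v\big)$. The first piece must match the $g$-derivative term: indeed $\nabla h = \nabla\big(g(\idmap - A^{-1}\nabla u)\det(\cdots)\big)$, and one checks that $[A - D^2 u]^{-1} = A^{-1}(\idmatrix - A^{-1}D^2 u)^{-1} = (\idmatrix - D^2 u\,A^{-1})^{-1}A^{-1}$, so the chain-rule factor $-A^{-1}\nabla v$ acting on $\nabla g$ pairs correctly; the $\det$-factor inside $\nabla h$ combines with the $h\,\OpeDiv(\cdots)$ piece, and a short manipulation using $\OpeDiv$ of a matrix field reproduces exactly the trace term above. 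This gives the first displayed identity; the second follows immediately by substituting $h = g(\idmap - A^{-1}\nabla u)\det(\idmatrix - A^{-1}D^2 u)$, writing $\det(\idmatrix - A^{-1}D^2 u)(\idmatrix - A^{-1}D^2 u)^{-1}A^{-1} = \det(\idmatrix - A^{-1}D^2 u)(A - D^2 u)^{-1}$ and using $(A - D^2 u)^{-1}\det(A - D^2 u) = \OpeTranspose{[\OpeComatrix(A - D^2 u)]}$ together with $\det(\idmatrix - A^{-1}D^2 u) = \det(A - D^2 u)/\det A$ to pull out the factor $1/\det A$.

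The main obstacle I expect is bookkeeping rather than conceptual: carefully tracking the placement of $A^{-1}$ (left versus right of the various factors, using that $A$ is symmetric so $\OpeTranspose{A^{-1}} = A^{-1}$), making sure the identity $\langle \nabla h, X\rangle + h\,\OpeDiv X = \OpeDiv(h X)$ is applied to the correct vector field $X = [A - D^2 u]^{-1}\nabla v$, and verifying that the term produced by $\nabla$ hitting the $\det$ factor really does cancel against the divergence of the matrix field $[A - D^2 u]^{-1}$ contracted with $\nabla v$. It is worth recording that $\OpeComatrix(M)$ has divergence-free rows when $M = A - D^2 u$ is a Hessian-type perturbation — more precisely, the needed cancellation is a manifestation of the Piola-type identity $\OpeDiv\big(\OpeTranspose{[\OpeComatrix(A - D^2 u)]}\big) = 0$ — which is the cleanest way to see why the two forms of $D_u\FirstOperator$ agree and why the result is genuinely in divergence form (hence self-adjoint, which is what will be exploited afterwards via the implicit function theorem).
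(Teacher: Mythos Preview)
Your approach is correct but differs from the paper's. You compute $D_u\FirstOperator$ pointwise via the chain rule and Jacobi's formula, and then must \emph{recognize} the result as a divergence; the needed cancellation is exactly the Piola identity $\OpeDiv\big(\OpeTranspose{[\OpeComatrix(A-D^2u)]}\big)=0$, which you correctly flag as the crux. The paper instead works weakly: it starts from the change-of-variables identity
\[
\int \xi\!\left(x-A^{-1}\nabla u(x)\right)\left[f(x)-\FirstOperator(A,u)(x)\right]\OpeIntd x = \int \xi(y)\,g(y)\OpeIntd y,
\]
differentiates it in $u$ against an arbitrary test function $\xi$, rewrites $\nabla\xi(T_Au)$ through $\nabla[\xi\circ T_Au]$ using $DT_Au = \idmatrix - A^{-1}D^2u$, and obtains the divergence form directly by integration by parts. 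The advantage of the paper's route is that the divergence structure appears automatically and the Piola identity is never invoked; the advantage of yours is that it is a straightforward pointwise computation, at the price of having to know (or prove) that the cofactor matrix of a Hessian-type field is row-wise divergence free.
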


We denote by $\OpeTranspose{M}$ the transposed matrix of $M$, and by $\OpeComatrix(M)$ its comatrix, that is to say the matrix formed by the cofactors.

\begin{proof}
 The smoothness of $\mathcal{F}$ is obvious. By substitution, for any $\xi \in \SetCl^\infty$,
\[ \int \xi\left(x-A^{-1}\nabla u(x)\right)\left[f(x) - \FirstOperator(A,u)(x)\right] \OpeIntd x = \int \xi(y) g(y) \OpeIntd y.\]
Therefore, if we conveniently set $T_Au(x) := x - A^{-1} \nabla u(x)$ and differentiate the previous equation with respect to $u$ along the direction $v$, we get
\[- \int \left\langle \nabla \xi(T_Au), A^{-1}\nabla  v \right\rangle \left(f-\FirstOperator(A,u)\right) - \int \xi(T_Au) D_u \FirstOperator(A,u) v = 0.\]
Since $\nabla [\xi \circ T_Au] = \OpeTranspose{[D T_Au]} \nabla \xi(T_Au)$, we have
\begin{align*}  
	\left\langle \nabla \xi(T_Au), A^{-1}\nabla  v \right\rangle 	& 	= \left\langle \nabla [\xi \circ T_Au], [D T_Au]^{-1} A^{-1} \nabla v \right\rangle \\
												&	= \left\langle \nabla [\xi \circ T_Au], [\idmatrix - A^{-1} D^2 u]^{-1} A^{-1} \nabla v \right\rangle,
\end{align*} 
and this yields
\begin{multline*}
 \int \xi(T_Au) D_u \FirstOperator(A,\psi_A) v \\
 = \int \xi(T_Au) \OpeDiv\left( \left(f-\FirstOperator(A,u)\right)[\idmatrix - A^{-1} D^2 u]^{-1} A^{-1} \nabla v \right),
 \end{multline*}
and thus, since $\xi \circ T_Au$ is arbitrary, we get the first equality. Then, we can easily obtain the second expression using the formula $M^{-1} = \OpeTranspose{[\OpeComatrix{M}]}/\det(M)$.
\end{proof}


\begin{lemma}\label{LemEUFirstOrder}
Let $\epsilon > 0$  and $A \in \SetSymmetric_N^{++}$. If $u \in \SetCl^{2}_\NotZeroMeanValue(\SetT^N)$ is such that 
\[A - D^2 u > \epsilon(\det A)^{\frac{1}{N-1}} \idmatrix,\] 
then for any $q \in [H_\NotZeroMeanValue^1(\SetT^N)]^*$, there is a unique $v \in H^{1}_\NotZeroMeanValue(\SetT^N)$ such that  
\begin{equation}\label{EqDiffSM}
	D_u \FirstOperator(A,u)v = q.
\end{equation}
Moreover, $\| v \|_{H^{1}} \leq C_{\epsilon} \|q\|_{(H^1_\NotZeroMeanValue)^*}$. 
\end{lemma}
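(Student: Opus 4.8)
The plan is to recognize \eqref{EqDiffSM} as a linear, uniformly elliptic PDE in divergence form and to solve it by the Lax--Milgram theorem on the Hilbert space $H^1_\NotZeroMeanValue(\SetT^N)$. By Lemma~\ref{LemDerivativeFirstOperator}, the operator can be written as
\[ D_u \FirstOperator(A,u)v = \OpeDiv\bigl( M \nabla v \bigr), \qquad M := \bigl(f - \FirstOperator(A,u)\bigr)\bigl[A - D^2 u\bigr]^{-1}, \]
so that \eqref{EqDiffSM} is the weak formulation: find $v \in H^1_\NotZeroMeanValue$ such that $B(v,w) = -\langle q, w\rangle$ for all $w \in H^1_\NotZeroMeanValue$, where $B(v,w) := \int \langle M \nabla v, \nabla w\rangle$. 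First I would check that $M$ is a bounded, measurable, symmetric matrix field (symmetry follows because $A - D^2 u$ is symmetric, hence so is its inverse) and that it is uniformly coercive. The latter is where the hypothesis $A - D^2 u > \epsilon(\det A)^{1/(N-1)}\idmatrix$ enters: this bound controls the smallest eigenvalue of $A - D^2 u$ from below, and since $f - \FirstOperator(A,u) = g(\idmap - A^{-1}\nabla u)\det(\idmatrix - A^{-1}D^2 u)$, one also needs a lower bound on $f - \FirstOperator(A,u)$. That lower bound comes from the strict positivity of $g$ together with $\det(\idmatrix - A^{-1}D^2 u) = \det(A - D^2 u)/\det A \geq \bigl(\epsilon(\det A)^{1/(N-1)}\bigr)^N/\det A = \epsilon^N (\det A)^{1/(N-1)} > 0$; combining these yields $M \geq \kappa_\epsilon \idmatrix$ for a constant $\kappa_\epsilon > 0$ depending only on $\epsilon$, $\inf g$, and (through $\det A$) on $A$.

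The second ingredient is coercivity of $B$ on $H^1_\NotZeroMeanValue$. From $M \geq \kappa_\epsilon \idmatrix$ we get $B(v,v) \geq \kappa_\epsilon \|\nabla v\|_{L^2}^2$, and then the Poincaré--Wirtinger inequality on $\SetT^N$ (valid precisely because we have restricted to the zero-mean subspace) upgrades this to $B(v,v) \geq c_\epsilon \|v\|_{H^1}^2$. Boundedness of $B$ is immediate from $\|M\|_\infty < \infty$. Thus Lax--Milgram gives a unique $v \in H^1_\NotZeroMeanValue$ solving $B(v,w) = -\langle q,w\rangle$ for all $w \in H^1_\NotZeroMeanValue$, together with the estimate $\|v\|_{H^1} \leq C_\epsilon \|q\|_{(H^1_\NotZeroMeanValue)^*}$, which is exactly the claimed bound.

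One point that requires a small remark is the following: Lax--Milgram produces a $v$ such that $\OpeDiv(M\nabla v) = q$ holds when tested against zero-mean functions, whereas a priori \eqref{EqDiffSM} asks for equality against all test functions. This is not an issue because both sides are automatically of zero mean --- $\OpeDiv(M\nabla v)$ integrates to $0$ over $\SetT^N$, and $q \in (H^1_\NotZeroMeanValue)^*$ only sees zero-mean functions --- so testing against constants gives no extra condition, and the equation indeed holds in $(H^1_\NotZeroMeanValue)^*$ as stated. I expect the only genuinely delicate step to be the quantitative lower bound $M \geq \kappa_\epsilon \idmatrix$: one must be careful that the constant depends on $A$ only through $\det A$ in the way that makes the hypothesis scale-invariant, and that the lower bound on $f - \FirstOperator(A,u)$ does not secretly require knowing that $u = \Psi_A$ (it does not --- it follows purely from the algebraic identity defining $\FirstOperator$ and the ellipticity hypothesis on $u$). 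Everything else is the standard Lax--Milgram machinery on the torus.
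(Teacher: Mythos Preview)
Your overall strategy---Lax--Milgram on $H^1_\NotZeroMeanValue(\SetT^N)$ together with the Poincar\'e inequality---is exactly the paper's approach. However, your coercivity argument has a genuine gap.

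You write that the hypothesis controls the smallest eigenvalue of $A - D^2 u$ from below, and that $f - \FirstOperator(A,u) \geq \delta\,\epsilon^N(\det A)^{1/(N-1)}$, and then claim ``combining these yields $M \geq \kappa_\epsilon \idmatrix$.'' But a lower bound on the smallest eigenvalue of $A - D^2 u$ gives only an \emph{upper} bound on $[A - D^2 u]^{-1}$, not a lower bound. To bound $M = (f-\FirstOperator(A,u))[A-D^2u]^{-1}$ from below you would need an upper bound on the \emph{largest} eigenvalue of $A - D^2 u$, and the hypothesis does not provide one directly. If you appeal to $u \in \SetCl^2$ to bound $D^2 u$, the resulting constant depends on $\|u\|_{\SetCl^2}$ and on $A$, which defeats the purpose of the lemma: it is applied later with $A = A_t$ degenerating as $t \to 0$, so the constant $C_\epsilon$ must be independent of $A$ (and of $u$). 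Your parenthetical ``(through $\det A$) on $A$'' is precisely the symptom of this problem.

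The paper repairs this by using the \emph{second} expression from Lemma~\ref{LemDerivativeFirstOperator}, namely the cofactor form
\[
M = \frac{g(\idmap - A^{-1}\nabla u)}{\det A}\,\OpeTranspose{\bigl[\OpeComatrix(A - D^2 u)\bigr]}.
\]
The eigenvalues of $\OpeComatrix(A-D^2u)$ are products of $N-1$ eigenvalues of $A - D^2 u$, each at least $\epsilon(\det A)^{1/(N-1)}$, hence each product is at least $\epsilon^{N-1}\det A$. The factor $\det A$ then cancels, giving $M \geq \delta\epsilon^{N-1}\idmatrix$ with a constant depending only on $\epsilon$ and $\inf g$. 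Equivalently, you can stay with your first form and observe that $\det(A-D^2u)\,[A-D^2u]^{-1} = \OpeTranspose{[\OpeComatrix(A-D^2u)]}$, so the determinant hidden in your scalar bound is exactly what compensates the missing upper bound on the eigenvalues---but this step must be made explicit, and once you do, you recover the paper's argument.
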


\begin{proof}
As $A-D^2 u > \epsilon (\det A)^{1/(N-1)} \idmatrix$, the lowest eigenvalue of $\OpeComatrix{(A-D^2u)}$ is bounded by $\epsilon^{N-1} \det A$. Since $g > \delta$ for some $\delta > 0$,  for any $\xi \in \SetCl^\infty(\SetT^N)$,
\begin{align*}
\epsilon^{N-1} \det A \int |\nabla \xi|^2 
	&\leq \int \langle \OpeTranspose{[\OpeComatrix{(A-D^2u)}]} \nabla \xi, \nabla \xi \rangle \\
	& \leq \frac{1}{\delta} \int  g\left(\idmap - A^{-1}\nabla u\right) \langle  \OpeTranspose{[\OpeComatrix{(A-D^2u)}]} \nabla \xi, \nabla \xi \rangle,
\end{align*}
and thus
\begin{equation}\label{EqCoercitivity}
  \int |\nabla \xi|^2  \leq -\frac{1}{\delta\epsilon^{N-1}} \int \xi D_u \FirstOperator(A,u)\xi.
\end{equation}
Therefore, thanks to the existence of a Poincaré inequality on $H^1_\NotZeroMeanValue(\SetT^N)$, the map $(\xi,\eta) \mapsto \int \eta D_u \FirstOperator(A,u)\xi$ induces a coercive, continuous bilinear form on $H^1_\NotZeroMeanValue$. 
We are thus entitled to apply the Lax--Milgram theorem, which yields the existence and the uniqueness, for every $q \in (H^1_\NotZeroMeanValue)^*$,  of a $v \in H^1_\NotZeroMeanValue$ satisfying \eqref{EqDiffSM}. 
Moreover, \eqref{EqCoercitivity} immediately gives us $\| v\|_{H^{1}} \leq \frac{1}{\delta\epsilon^{N-1}}\|q\|_{(H^1_\NotZeroMeanValue)^*}$.
\end{proof}

The regularity of the solutions to an elliptic equation is well known. However, as in the following we will need some very precise estimates to apply the Nash--Moser theorem, let us give a proof of the following result:

\begin{lemma}\label{LemEUNextOrders}
Under the same assumptions, and with the same notations, for any $n \geq 1$, if $u \in \SetCl^{n+2}_\NotZeroMeanValue$ and $q \in H_\NotZeroMeanValue^{n-1}$ satisfy $\|u\|_{\SetCl^{3}} + \|q\|_{(H^1_\NotZeroMeanValue)^*} \leq M$, then $v \in H^{n+1}_\NotZeroMeanValue$, and
\begin{equation} \label{EqBoundednessInduction}
 \| v\|_{H^{n+1}} \leq C_{\epsilon, M, n}\left\{ \|q\|_{H^{n-1}} + \| u \|_{\SetCl^{n+2}} \right\}.
 \end{equation}
\end{lemma}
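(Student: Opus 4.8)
The plan is to establish the higher regularity estimate by differentiating the linearized equation \eqref{EqDiffSM} and proceeding by induction on $n$, keeping careful track of how the constants depend on $\epsilon$, $M$, and $n$. The base case $n=1$ is essentially Lemma \ref{LemEUFirstOrder} together with standard elliptic $H^2$-regularity: writing the equation in non-divergence form, $a^{ij}\partial_{ij}v = q - (\partial_i a^{ij})\partial_j v$ where $a^{ij}$ are the entries of $(f-\FirstOperator(A,u))[A-D^2u]^{-1}$ (equivalently, of the comatrix expression divided by $\det A$), the coefficients $a^{ij}$ are controlled in $\SetCl^1$ by $\|u\|_{\SetCl^3}\le M$ and are uniformly elliptic with ellipticity constant depending only on $\epsilon$ and $M$ (lower bound on $g$, lower bound $\epsilon^{N-1}\det A$ on the comatrix). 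Feeding in the $H^1$ bound $\|v\|_{H^1}\le C_\epsilon\|q\|_{(H^1_\NotZeroMeanValue)^*}$ from Lemma \ref{LemEUFirstOrder} gives $\|v\|_{H^2}\le C_{\epsilon,M}\{\|q\|_{L^2}+\|v\|_{H^1}\}\le C_{\epsilon,M}\{\|q\|_{L^2}+1\}$, which is the claimed bound for $n=1$ (absorbing the additive constant, or using $\|q\|_{(H^1_\NotZeroMeanValue)^*}\le M$).

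For the inductive step, suppose the estimate holds up to order $n-1$. Differentiate \eqref{EqDiffSM} once, say in the direction $e_\ell$: setting $w=\partial_\ell v$, one gets $D_u\FirstOperator(A,u)w = \partial_\ell q - \OpeDiv\big((\partial_\ell a)\nabla v\big)$ in divergence form, where $a = (f-\FirstOperator(A,u))[A-D^2u]^{-1}$ and $\partial_\ell a$ involves up to third derivatives of $u$. The new right-hand side $\tilde q := \partial_\ell q - \OpeDiv((\partial_\ell a)\nabla v)$ lies in $H^{n-2}_\NotZeroMeanValue$ (note $w$ automatically has zero mean), with $\|\tilde q\|_{H^{n-2}} \le C\{\|q\|_{H^{n-1}} + \|a\|_{\SetCl^{n}}\|v\|_{H^n}\}$, and $\|a\|_{\SetCl^n}$ is bounded in terms of $\|u\|_{\SetCl^{n+2}}$ (through $g$, $\nabla u$, and the matrix inverse, using that the lowest eigenvalue is controlled, so the inverse is a smooth function of $D^2u$ with bounds from $\epsilon$, $M$). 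The inductive hypothesis applied to $w$ with right-hand side $\tilde q$ gives $\|w\|_{H^n}\le C_{\epsilon,M,n}\{\|\tilde q\|_{H^{n-2}} + \|u\|_{\SetCl^{n+1}}\}$; combining over all $\ell$ and adding the $H^1$ bound for $v$ itself yields $\|v\|_{H^{n+1}}\le C_{\epsilon,M,n}\{\|q\|_{H^{n-1}} + \|u\|_{\SetCl^{n+2}} + \|v\|_{H^n}\}$. The term $\|v\|_{H^n}$ is then controlled by the previous order of the induction, again in the form $C_{\epsilon,M,n-1}\{\|q\|_{H^{n-2}}+\|u\|_{\SetCl^{n+1}}\}$, and since the $\SetCl^{n+2}$ and $H^{n-1}$ norms dominate, everything collapses to the stated \eqref{EqBoundednessInduction}.

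The delicate point — the step I expect to be the main obstacle — is not the elliptic regularity itself, which is classical, but making the dependence of the constants completely explicit and \emph{tame} in the Nash--Moser sense: the constant may blow up with $n$ and with $M=\|u\|_{\SetCl^3}+\|q\|_{(H^1_\NotZeroMeanValue)^*}$, but the right-hand side must be \emph{linear} in the top-order norms $\|q\|_{H^{n-1}}$ and $\|u\|_{\SetCl^{n+2}}$, with lower-order norms only entering through the fixed constant $M$. This forces one to be disciplined: whenever a product of derivatives of $u$ appears, one must use an interpolation (Gagliardo--Nirenberg on $\SetT^N$) inequality to trade a high-order factor against low-order ones, so that at most one factor carries the top-order norm while the others are absorbed into $C_{\epsilon,M,n}$. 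Concretely, in estimating $\|a\|_{\SetCl^n}\|v\|_{H^n}$ one cannot afford $\|u\|_{\SetCl^{n+2}}\|v\|_{H^n}$ directly; instead one interpolates $\|v\|_{H^n}$ between $\|v\|_{H^{n+1}}$ (absorbed on the left with a small coefficient) and $\|v\|_{H^1}$ (bounded by $M$), and similarly handles the nonlinear dependence of the matrix inverse on $D^2u$ via the Faà di Bruno / Moser-type product and composition estimates. Carrying this bookkeeping through the induction, so that the final constant is of the form $C_{\epsilon,M,n}$ with exactly the linear top-order structure of \eqref{EqBoundednessInduction}, is the real content of the proof.
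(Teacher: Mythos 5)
Your overall strategy---induction on $n$, differentiating the linearized equation and applying the lower-order estimate to the derivative, with Landau--Kolmogorov-type interpolation to keep the estimate tame---is the same in spirit as the paper's. The paper does not differentiate once and appeal to the inductive hypothesis as a black box; it applies $\delta_h\partial_\nu$ with $|\nu|=n-1$ directly, producing the full Leibniz expansion
\[
\|v\|_{H^{n+1}} \leq C\Bigl\{\|q\|_{H^{n-1}}+\sum_{0\le k\le n-1}(1+\|u\|_{\SetCl^{n-k+2}})\,\|v\|_{H^{k+1}}\Bigr\},
\]
and then interpolates \emph{both factors simultaneously}: with $\theta_k$ such that $\|u\|_{\SetCl^{n-k+2}}\le\|u\|_{\SetCl^3}^{1-\theta_k}\|u\|_{\SetCl^{n+2}}^{\theta_k}$ and $\|v\|_{H^{k+1}}\le\|v\|_{H^1}^{\theta_k}\|v\|_{H^n}^{1-\theta_k}$, the product is a geometric mean of $\|u\|_{\SetCl^3}\|v\|_{H^n}$ and $\|u\|_{\SetCl^{n+2}}\|v\|_{H^1}$, hence bounded by their arithmetic mean, which is linear in the two top-order norms.

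Your proposal has a concrete gap precisely at this step. You propose to control $\|u\|_{\SetCl^{n+2}}\|v\|_{H^n}$ by interpolating only $\|v\|_{H^n}\le\|v\|_{H^{n+1}}^{(n-1)/n}\|v\|_{H^1}^{1/n}$ and absorbing the $\|v\|_{H^{n+1}}$ factor on the left by Young. Young with exponents $n/(n-1)$ and $n$ then produces a term $C_\delta\|u\|_{\SetCl^{n+2}}^{\,n}\|v\|_{H^1}$, which is polynomial of degree $n$ in $\|u\|_{\SetCl^{n+2}}$---not the linear top-order dependence the tame estimate \eqref{EqBoundednessInduction} requires. Your own surrounding remarks (``at most one factor carries the top-order norm,'' Moser-type product estimates) point towards the correct fix, but the specific interpolation you spell out does not deliver it. You need either the dual interpolation of both $u$- and $v$-factors on the full Leibniz sum as above, or equivalently a tame product estimate of the form $\|\phi\,\eta\|_{H^{n-1}}\le C(\|\phi\|_{\SetCl^1}\|\eta\|_{H^{n-1}}+\|\phi\|_{\SetCl^{n-1}}\|\eta\|_{H^1})$ applied to $(\partial_\ell a)\nabla v$; in either case you cannot simply feed $\|\tilde q\|_{H^{n-2}}\le C\{\|q\|_{H^{n-1}}+\|a\|_{\SetCl^n}\|v\|_{H^n}\}$ into the induction.

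A secondary point: you differentiate \eqref{EqDiffSM} formally and set $w=\partial_\ell v$, which presupposes the regularity you are trying to establish. The paper instead runs the argument on difference quotients $\delta_h\partial_\nu v$ and obtains bounds uniform in $h$, from which membership in $H^{n+1}$ follows; you should do the same (or at least invoke a standard approximation/mollification argument) to avoid circularity.
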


\begin{proof}
We proceed by induction. Let $n \geq 1$, $u \in \SetCl^{n+2}_\NotZeroMeanValue$ and $q \in H^{n}_\NotZeroMeanValue$ such that $A - D^2 u > \epsilon (\det A)^{1/(N-1)} \idmatrix$ and $\|u\|_{\SetCl^{3}} + \|q\|_{(H^1_\NotZeroMeanValue)^*} \leq M$. We assume that we already know that the corresponding solution $v$ is in  $H^{n}_\NotZeroMeanValue$, and that
\begin{equation} \label{EqBoundednessInductionPreviousOrder}
 \| v\|_{H^{n}} \leq C_{\epsilon, M, n-1}\left\{ \|q\|_{H^{n-2}} + \| u \|_{\SetCl^{n+1}} \right\}.
 \end{equation}
Notice that we do have such an inequality for $n=1$, according to the previous lemma, but with $\|q\|_{(H^1_\NotZeroMeanValue)^*}$ instead of $\|q\|_{H^{-1}}$. Let us now show that it implies $v \in H^{n+1}_\NotZeroMeanValue$ and
\[  \| v\|_{H^{n+1}} \leq C_{\epsilon, M, n}\left\{ \|q\|_{H^{n-1}} + \| u \|_{\SetCl^{n+2}} \right\}.\]

First, we set $B_A u := (f-\FirstOperator(A,u))[A-D^2u]^{-1}$, so that Equation \eqref{EqDiffSM} becomes
\begin{equation}\label{EqDiffSM2}
 D_u \FirstOperator(A,u)v = \OpeDiv(B_A u \nabla v).
 \end{equation}
Then, for $h \in \SetR^2$ and $\xi \in H^1$, we also define
\[\tau_h \xi(x) := \xi(x+h) \qquad \text{and} \qquad \delta_h \xi(x) := \frac{\xi(x+h)-\xi(x)}{h}.\]
Notice then that $\delta_h (\eta\xi) = \eta \delta_h \xi + (\delta_h \eta) \tau_h \xi$, and $\| \delta_h \xi\|_{L^2} \leq \|\xi\|_{H^1}$.

Let $\nu \in \SetN^2$ be a $2$-index, with $|\nu|:=\nu_1+\nu_2=n-1$, and let $h \in \SetR^2$ be small enough. We can apply the operator $\delta_h$ to Equation \eqref{EqDiffSM2}, and we then obtain
\[ \OpeDiv(B_Au \nabla \delta_h v) = \delta_h q - \OpeDiv\left[(\delta_hB_Au)\nabla \tau_h v\right]\]
Then, by applying $\partial_\nu$, we get
\begin{multline} \label{EqTrucImmondeAuMilieu}
\OpeDiv(B_Au \nabla \delta_h \partial_\nu v) =   \delta_h \partial_\nu q  - \sum_{0 \leq \alpha \leq \nu} \binom{\nu}{\alpha} \OpeDiv\left[\left(\delta_h \partial_{\nu-\alpha} B_A u\right) \nabla \tau_h \partial_{\alpha} v\right].  \\
- \sum_{0 \leq \alpha < \nu} \binom{\nu}{\alpha} \OpeDiv\left[\left(\partial_{\nu-\alpha} B_A u\right) \nabla \delta_h \partial_{\alpha} v\right].
\end{multline}
Now, Lemma \ref{LemEUFirstOrder} tells us that this implies
 \begin{multline*}
   \left\| \delta_h \partial_\nu v \right\|_{H^1}
\leq 
 C_\epsilon \|\delta_h \partial_\nu q\|_{(H^1_\NotZeroMeanValue)^*}  \\
  + C_\epsilon\sum_{0 \leq \alpha \leq \nu} \binom{\nu}{\alpha} \left\|\OpeDiv\left[\left(\delta_h \partial_{\nu-\alpha} B_A u\right) \nabla \tau_h \partial_{\alpha} v\right]\right\|_{(H^1_\NotZeroMeanValue)^*} \\
+ C_\epsilon\sum_{0 \leq \alpha < \nu} \binom{\nu}{\alpha} \left\|\OpeDiv\left[\left(\partial_{\nu-\alpha} B_A u\right) \nabla \delta_h \partial_{\alpha} v\right]\right\|_{(H^1_\NotZeroMeanValue)^*} .
	 \end{multline*}
Since  $ \|\delta_h \partial_\nu q\|_{(H^1_\NotZeroMeanValue)^*} \leq  \|\partial_\nu q\|_{L^2}$, this bound is uniform in $h$, and so it is enough to ensure $v \in H^{n+1}$ and
 \begin{equation}\label{EqRevisedTameEstimate}
   \| v\|_{H^{n+1}} \leq  C\left\{\| q\|_{H^{n-1}} +  \sum_{0 \leq k \leq n-1} (1+\|u\|_{\SetCl^{n-k+2}}) \|v\|_{H^{k+1}}\right\}. 
\end{equation}
Notice that, when $n>1$, the following Landau--Kolmogorov inequalities hold
\begin{gather*}
\| u \|_{\SetCl^{n-k+2}} \leq C_{k,n} \|u \|_{\SetCl^3}^{1-\frac{k}{n-1}} \| u \|_{\SetCl^{n+2}}^{\frac{k}{n-1}}, \\
\| v \|_{H^{k+1}}\leq C_{k,n} \| v \|_{H^1}^{\frac{k}{n-1}} \| v \|_{H^{n}}^{1-\frac{k}{n-1}}.
\end{gather*}
They are quite classical and can be easily proved by induction from
\[\| \xi \|_{\SetCl^1} \leq \sqrt{2 \|\xi \|_{\SetCl^0} \|\xi\|_{\SetCl^2}} \qquad \text{and} \qquad \|\xi\|_{H^1} \leq \sqrt{\|\xi\|_{L^2} \|\xi\|_{H^2}},\]
for $\xi$ smooth enough satisfying $\int \xi = 0$. Since $a^{1-t}b^t \leq (1-t)a+tb$, we get
\[  \|u\|_{\SetCl^{n-k+2}} \|v\|_{H^{k+1}} \leq \frac{k}{n-1}\|u\|_{\SetCl^3}\|v\|_{H^{n}} +  \left(1-\frac{k}{n-1}\right)\|u\|_{\SetCl^{n+2}}\|v\|_{H^1}, \]
and therefore
\[ \|v\|_{H^{n+1}} \leq C \left\{ \|q\|_{H^{n-1}} + (1+\|u\|_{\SetCl^3})\|v\|_{H^{n}} + \|u\|_{\SetCl^{n+2}}\|v\|_{H^1} \right\}. \]
This last inequality still holds when $n=1$, thanks to \eqref{EqRevisedTameEstimate}. In any case,  as $\|v\|_{H^1} \leq C_\epsilon \|q\|_{H^{-1}}$ and $\|u\|_{\SetCl^{3}} + \|q\|_{H^{-1}} \leq M$, using our assumption \eqref{EqBoundednessInductionPreviousOrder}, 
\[  \| v\|_{H^{n+1}} \leq C_{\epsilon,M, n} \left\{ \|q\|_{H^{n-1}} + \| u \|_{\SetCl^{n+2}} \right\}.\]
This is exactly what we wanted.
\end{proof}

\begin{lemma}\label{LemFOBijection}
Under the same assumptions, for any $q \in \SetCl^{n,\alpha}_\NotZeroMeanValue(\SetT^N)$, there is a unique $v \in \SetCl^{n+2,\alpha}_\NotZeroMeanValue(\SetT^N)$ such that
\begin{equation*}
	D_u \FirstOperator(A,u)v = q.
\end{equation*}
\end{lemma}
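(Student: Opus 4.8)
The plan is to obtain the bijectivity by combining the variational solvability already established in Lemma~\ref{LemEUFirstOrder} with classical Schauder regularity for the uniformly elliptic divergence-form operator $v \mapsto D_u\FirstOperator(A,u)v = \OpeDiv(B_A u\,\nabla v)$, where $B_A u := (f - \FirstOperator(A,u))[A - D^2 u]^{-1}$ as in the proof of Lemma~\ref{LemEUNextOrders}. First I would dispatch injectivity: since $\SetCl^{n+2,\alpha}_\NotZeroMeanValue(\SetT^N) \subset H^1_\NotZeroMeanValue(\SetT^N)$, the uniqueness half of Lemma~\ref{LemEUFirstOrder} immediately gives that $v \mapsto D_u\FirstOperator(A,u)v$ is injective on $\SetCl^{n+2,\alpha}_\NotZeroMeanValue$. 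Hence the whole content is surjectivity, that is, a regularity statement for the weak solution.

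For surjectivity, given $q \in \SetCl^{n,\alpha}_\NotZeroMeanValue(\SetT^N)$, I would first note that $q \in L^2(\SetT^N) \subset (H^1_\NotZeroMeanValue)^*$ and invoke Lemma~\ref{LemEUFirstOrder} to produce a weak solution $v \in H^1_\NotZeroMeanValue$ of $\OpeDiv(B_A u\,\nabla v) = q$; the compatibility condition $\int q = 0$ is automatic, the left-hand side being a divergence. It then remains to show $v \in \SetCl^{n+2,\alpha}$. The hypothesis $A - D^2 u > \epsilon (\det A)^{1/(N-1)} \idmatrix$, together with $g \geq \delta > 0$ and $f$ bounded, yields $\lambda \idmatrix \leq B_A u \leq \Lambda \idmatrix$ for some constants $0 < \lambda \leq \Lambda$, so the operator is uniformly elliptic; and since $f$ and $g$ are smooth, $B_A u$ carries exactly the regularity of $\nabla u$ and $D^2 u$. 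Global Schauder theory on the compact manifold $\SetT^N$ then bootstraps the solution: from $q \in \SetCl^{0,\alpha}$ one first gets $v \in \SetCl^{2,\alpha}$, and when $n \geq 1$, differentiating the equation along $\partial_j$ gives $\OpeDiv(B_A u\,\nabla \partial_j v) = \partial_j q - \OpeDiv\bigl((\partial_j B_A u)\,\nabla v\bigr)$, whose right-hand side is one order rougher and is controlled by the regularity of $v$ obtained at the previous stage, so re-applying the estimate raises $v$ by one derivative; iterating $n$ times yields $v \in \SetCl^{n+2,\alpha}$, which I would normalise to have zero mean (this changes nothing in $D_u\FirstOperator(A,u)v$). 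Together with injectivity, this shows that $v \mapsto D_u\FirstOperator(A,u)v$ is a bijection from $\SetCl^{n+2,\alpha}_\NotZeroMeanValue(\SetT^N)$ onto $\SetCl^{n,\alpha}_\NotZeroMeanValue(\SetT^N)$, which is the claim.

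The step I expect to be the main obstacle is the Hölder bootstrap itself. Each differentiation of $\OpeDiv(B_A u\,\nabla v) = q$ generates commutator terms of the form $\OpeDiv\bigl((\partial^{\beta} B_A u)\,\nabla \partial^{\gamma} v\bigr)$ that have to be absorbed using the regularity of $v$ gained one step earlier, and one must check---this being the Hölder counterpart of the Sobolev estimate in Lemma~\ref{LemEUNextOrders}, in the same Landau--Kolmogorov spirit---that the regularity available for the coefficient $B_A u$ is precisely what is needed to push $v$ all the way to $\SetCl^{n+2,\alpha}$ without demanding extra derivatives of $u$. What makes this manageable is that the statement is purely qualitative: no quantitative (tame) estimate has to be tracked here, in contrast with Lemma~\ref{LemEUNextOrders}. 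The ellipticity constants, the solvability condition $\int q = 0$, and the normalisation $\int v = 0$ are all routine.
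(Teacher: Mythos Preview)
Your proposal is correct and follows essentially the same route as the paper: produce a weak solution via Lemma~\ref{LemEUFirstOrder} and then upgrade it to $\SetCl^{n+2,\alpha}$ by classical elliptic (Schauder) regularity for the uniformly elliptic divergence-form operator $\OpeDiv(B_A u\,\nabla v)$. The only cosmetic difference is that the paper first passes through $H^{n+2}$ using Lemma~\ref{LemEUNextOrders} and then invokes the local Schauder theory from Gilbarg--Trudinger (Chapter~6) in one stroke, whereas you stay at $H^1$ and perform the H\"older bootstrap explicitly; your worry about the commutator terms is therefore moot, since the paper simply cites the standard reference rather than redoing the iteration.
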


\begin{proof}
If $q \in \SetCl^{n,\alpha}_\NotZeroMeanValue$, then $q \in H^n_\NotZeroMeanValue$, and thus according to the previous lemmas, there is $v \in H^{n+2}_\NotZeroMeanValue$ such that $D_u \FirstOperator(A,u) v = q$ in $(H^1_\NotZeroMeanValue)^*$. But since $\int q = 0$, given the particular form of $D_u \FirstOperator(A,u)v$ given by Lemma \ref{LemDerivativeFirstOperator}, such an equality in fact holds in $H^{-1}$. Thus, {locally}, in a weak sense,
\[ D_u \FirstOperator(A,u)v = q.\]
Then, we can locally use the theory of regularity for the solutions to a strictly elliptic equation in $\SetR^N$ to get existence and uniqueness of $v \in \SetCl^{n,\alpha}$ (cf. for instance Gilbarg \& Trudinger \cite{MR0473443}, Chapter 6).
\end{proof}

\begin{theorem}\label{ThePreRegularityKantorovich}
For any $A \in \SetSymmetric^{++}_N$, let $\OpeKantorovich{A}$ be the Kantorovich potential between the probability measure $\mu$ and $\nu$, which are still assumed to have smooth, strictly positive densities. Then, for any $n \geq 0$ and $\alpha \in (0,1)$, the following map
\[\Psi:\left\{ \begin{array}{ccc}
		\SetSymmetric_N^{++} &\longrightarrow &\SetCl^{n+2,\alpha}(\SetT^N) \\
		A &\longmapsto &\OpeKantorovich{A}
	\end{array}\right. ~\text{is}~~~ \SetCl^1.\] 
\end{theorem}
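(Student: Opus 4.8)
The plan is to apply the implicit function theorem in Banach spaces to the operator $\FirstOperator$, using the work done in Lemmas \ref{LemCharacterizationAPriori1}--\ref{LemFOBijection}. Fix $n \geq 0$ and $\alpha \in (0,1)$, and work with $\FirstOperator$ defined on the open set
\[ \mathcal{U} := \left\{ (A,u) \in \SetSymmetric_N^{++} \times \SetCl^{n+2,\alpha}_\NotZeroMeanValue(\SetT^N) : A - D^2 u > 0 \right\}, \]
viewed as a map into $\SetCl^{n,\alpha}_\NotZeroMeanValue(\SetT^N)$. (One first checks that $\FirstOperator(A,u)$ indeed has zero mean: integrating the Monge--Ampère-type expression amounts to comparing the total masses of $\mu$ and $T_Au\#\mu$, which are both $1$; alternatively this follows from Lemma \ref{LemDerivativeFirstOperator} since $\FirstOperator$ is a divergence plus the difference $f-g\circ(\ldots)\det(\ldots)$ whose integral vanishes.) By Lemma \ref{LemDerivativeFirstOperator}, $\FirstOperator$ is smooth on $\mathcal{U}$ — each ingredient ($A \mapsto A^{-1}$, composition with the fixed smooth $g$, the determinant, multiplication) is smooth between the relevant Hölder spaces — and its partial differential $D_u\FirstOperator(A,u)$ is the second-order divergence-form elliptic operator $v \mapsto \OpeDiv((f-\FirstOperator(A,u))[A-D^2u]^{-1}\nabla v)$, with no zeroth-order term.

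Next I would verify that $D_u\FirstOperator(A,u) : \SetCl^{n+2,\alpha}_\NotZeroMeanValue(\SetT^N) \to \SetCl^{n,\alpha}_\NotZeroMeanValue(\SetT^N)$ is a linear isomorphism at every point of $\mathcal{U}$. Given $(A,u) \in \mathcal{U}$, compactness of $\SetT^N$ gives $A - D^2 u > \epsilon(\det A)^{1/(N-1)}\idmatrix$ for some $\epsilon>0$, so Lemma \ref{LemFOBijection} applies and yields, for each $q \in \SetCl^{n,\alpha}_\NotZeroMeanValue$, a unique $v \in \SetCl^{n+2,\alpha}_\NotZeroMeanValue$ with $D_u\FirstOperator(A,u)v=q$; this is exactly the bijectivity. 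Boundedness of the inverse is either built into the Schauder estimates behind Lemma \ref{LemFOBijection} or follows from the open mapping theorem. Hence, by the implicit function theorem in Banach spaces, the equation $\FirstOperator(A,u)=0$ defines, locally around any of its solutions, a $\SetCl^1$ (indeed $\SetCl^\infty$, since $\FirstOperator$ is smooth) map $A \mapsto u(A)$.

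To finish, I would invoke Lemma \ref{LemCharacterizationAPriori1}: for each $A \in \SetSymmetric_N^{++}$, the unique $u \in \SetCl^{n+2,\alpha}_\NotZeroMeanValue$ with $A-D^2u>0$ solving $\FirstOperator(A,u)=0$ is precisely $\OpeKantorovich{A}=\Psi(A)$, and by Proposition \ref{ProKantorovich} this $\OpeKantorovich{A}$ is smooth, so in particular it lies in $\SetCl^{n+2,\alpha}_\NotZeroMeanValue$ and $(A,\OpeKantorovich{A}) \in \mathcal{U}$. Therefore the local $\SetCl^1$ branch produced by the implicit function theorem coincides with $\Psi$ near $A$; since $A$ was arbitrary, $\Psi : \SetSymmetric_N^{++} \to \SetCl^{n+2,\alpha}(\SetT^N)$ is $\SetCl^1$ (in fact $\SetCl^\infty$). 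The main obstacle is not any single step but making sure the functional-analytic setup is airtight: that $\FirstOperator$ genuinely maps into the zero-mean Hölder space and is smooth there, and above all that $D_u\FirstOperator(A,u)$ is an \emph{isomorphism} of Hölder spaces — this is where Lemma \ref{LemFOBijection} (hence the elliptic regularity and Schauder theory it rests on) does the real work, and it is precisely the breakdown of this isomorphism property, through loss of derivatives, that will force the Nash--Moser machinery in the degenerate regime treated later.
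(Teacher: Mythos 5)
Your proposal is correct and follows essentially the same route as the paper: work with $\FirstOperator$ on the open set of pairs $(A,u)$ with $A - D^2 u > 0$, use Lemma \ref{LemFOBijection} plus the open mapping (Banach--Schauder) theorem to get that $D_u\FirstOperator$ is an isomorphism between the Hölder spaces, apply the implicit function theorem, and identify the resulting branch with $\Psi$ via Lemma \ref{LemCharacterizationAPriori1}. The only cosmetic difference is that you verify the zero-mean condition on $\FirstOperator$ and the isomorphism property at every point of $\mathcal{U}$ explicitly, whereas the paper only needs (and only checks) the isomorphism at the solution $(A,\OpeKantorovich{A})$ itself.
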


\begin{proof}
We denote by $\Omega$ be the set of all $(A,u) \in \SetSymmetric_{N}^{++} \times \SetCl^{n+2,\alpha}_\NotZeroMeanValue(\SetT^N)$ such that $A -D^2 u > 0$. Then $\Omega$ is open, the operator $\FirstOperator : \Omega \rightarrow  \SetCl^{n,\alpha}_\NotZeroMeanValue(\SetT^N)$, defined by
\[ \FirstOperator(A,u) = f - g\left(\idmap - A^{-1} \nabla u\right) \det\left(\idmatrix - A^{-1} D^2 u\right), \]
is smooth and, according Lemma \ref{LemFOBijection}, $D_u \FirstOperator(A,\psi_A) : \SetCl^{n+2,\alpha}_\NotZeroMeanValue(\SetT^N) \rightarrow  \SetCl^{n,\alpha}_\NotZeroMeanValue(\SetT^N)$ is a bijection. From the Banach--Schauder theorem, we deduce it is an isomorphism.
Since $\FirstOperator(A,\OpeKantorovich{A}) = 0$, according to the implicit function theorem, there is a $\SetCl^1$ map $\Phi$ defined in a neighborhood of $A$ such that 
 $B - D^2\Phi_{B} > 0$ and, for any $u \in \SetCl^{n+2,\alpha}_\NotZeroMeanValue$, $\FirstOperator(B,u)= 0$ if and only if $u=\Phi_B$. According to Lemma \ref{LemCharacterizationAPriori1}, it implies $\Phi_B = \OpeKantorovich{B}$. Thus, globally, $\Psi = \Phi$ is a $\SetCl^1$ map $\SetSymmetric_N^{++} \rightarrow\SetCl^{n,\alpha}_\NotZeroMeanValue(\SetT^N)$.
\end{proof}

We are now going to apply this result to the cost $c$ defined by \eqref{EqCostDefinition}, that is to say the cost induced by the matrix 
\[ A_t := \left( 	\begin{array}{ccccc}
				1	\\
					& \lambda_1(t) \\
					&			& \lambda_1(t) \lambda_2(t) \\
					&			& 					& \ddots \\
					&			&					&		& \prod \lambda_i(t) 
			\end{array}\right),\]
where  $\lambda_1, \ldots, \lambda_{N-1} : \SetR \rightarrow [0,+\infty)$ are assumed to be such that $\lambda_k(t) = 0$ if and only if $t=0$. 

\begin{theorem}\label{TheRegularityKantorovich}
If $\lambda_1, \ldots, \lambda_{N-1}$ are smooth, the map $\psi : t \mapsto \OpeKantorovich{A_t}$ is $\SetCl^1$, and satisfies:
\begin{equation}\label{EqRegularityKantorovich}
 \OpeDiv\left\{ f \left[ A_t - D^2 \psi_t \right]^{-1} \left( \nabla \dot{\psi}_t - \dot{A}_t A^{-1}_t \nabla \psi_t \right)\right\} = 0.
 \end{equation}
Moreover, if $u  : (0,+\infty) \rightarrow \SetCl^{n+2,\alpha}(\SetT^N)$ is $\SetCl^1$ and satisfies, for all $t \in (0,+\infty)$,
\begin{equation}\label{EqRegularityKantorovich2}
 A_t - D^2 u > 0 \quad \text{and} \quad \OpeDiv\left\{ f \left[ A_t - D^2 u_t \right]^{-1} \left( \nabla \dot{u}_t - \dot{A}_t A^{-1}_t \nabla u_t \right)\right\} = 0,
 \end{equation}
and if $u_{t_0} = \psi_{t_0}$ for some $t_0 > 0$, then $u_t = \psi_t$ for all $t > 0$.
\end{theorem}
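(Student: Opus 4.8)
Since the $\lambda_i$ are smooth and vanish only at $0$, the curve $t\mapsto A_t$ is smooth and takes values in $\SetSymmetric_N^{++}$ for $t>0$; composing with Theorem~\ref{ThePreRegularityKantorovich} (valid for every $n$ and $\alpha$) shows that $\psi:t\mapsto\OpeKantorovich{A_t}$ is $\SetCl^1$ into $\SetCl^{n+2,\alpha}(\SetT^N)$ for all $n,\alpha$, hence into $\SetCl^\infty(\SetT^N)$ in view of Proposition~\ref{ProKantorovich}. For the two remaining assertions I would monitor the single scalar function $\Phi(t):=\FirstOperator(A_t,u_t)$, where $u_\bullet$ is a given $\SetCl^1$ curve with $A_t-D^2u_t>0$ for all $t$ (taking $u_t=\psi_t$ in the first part). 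Writing $T_t:=\idmap-A_t^{-1}\nabla u_t$ --- a diffeomorphism of $\SetT^N$ by Lemma~\ref{LemCConcave} --- the definition of $\FirstOperator$ gives the pointwise identity $f-\Phi(t)=g(T_t)\det(DT_t)$, so the change of variables $y=T_t(x)$ yields
\[ \int\xi\bigl(T_t(x)\bigr)\,\bigl[f-\Phi(t)\bigr](x)\OpeIntd x=\int\xi(y)g(y)\OpeIntd y\qquad(\xi\in\SetCl^\infty(\SetT^N)).\]
The right-hand side is independent of $t$; differentiating in $t$ (licit because $u_\bullet$ is $\SetCl^1$ into $\SetCl^{n+2,\alpha}$), using $\dot T_t=-A_t^{-1}\bigl(\nabla\dot u_t-\dot A_tA_t^{-1}\nabla u_t\bigr)$, $\nabla[\xi\circ T_t]=\OpeTranspose{[DT_t]}\,(\nabla\xi)\!\circ\!T_t$ and $[DT_t]^{-1}A_t^{-1}=[A_t-D^2u_t]^{-1}$, followed by an integration by parts on the torus and the observation that $\xi\circ T_t$ ranges over all of $\SetCl^\infty(\SetT^N)$, one obtains the identity
\[ \dot\Phi(t)=\OpeDiv\!\left(\bigl[f-\Phi(t)\bigr]\,[A_t-D^2u_t]^{-1}\bigl(\nabla\dot u_t-\dot A_tA_t^{-1}\nabla u_t\bigr)\right).\]

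Taking $u_t=\psi_t$, Lemma~\ref{LemCharacterizationAPriori1} gives $\Phi\equiv0$, and the identity becomes exactly~\eqref{EqRegularityKantorovich}; since $\psi_t$, $f$, $g$ are smooth this holds classically, which proves the first part. For the uniqueness statement, let $u_\bullet$ satisfy~\eqref{EqRegularityKantorovich2} with $u_{t_0}=\psi_{t_0}$; we may assume $\int u_t=0$ for all $t$ (subtract the mean; this alters neither~\eqref{EqRegularityKantorovich2}, which only sees $\nabla u_t$ and $D^2u_t$, nor the value at $t_0$). Hypothesis~\eqref{EqRegularityKantorovich2} is precisely $\OpeDiv\bigl(f[A_t-D^2u_t]^{-1}(\nabla\dot u_t-\dot A_tA_t^{-1}\nabla u_t)\bigr)=0$, so the identity above collapses to a linear continuity equation $\dot\Phi(t)+\OpeDiv(\Phi(t)V_t)=0$, where $V_t:=[A_t-D^2u_t]^{-1}(\nabla\dot u_t-\dot A_tA_t^{-1}\nabla u_t)$ is $\SetCl^1$ in $x$ and continuous in $t$ (this is where $n\ge1$ is used). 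Multiplying by $\Phi(t)$ and integrating over $\SetT^N$ gives $\frac{\OpeIntd}{\OpeIntd t}\int\Phi(t)^2=-\int(\OpeDiv V_t)\,\Phi(t)^2\le\|\OpeDiv V_t\|_{\SetCl^0}\int\Phi(t)^2$; since $\Phi(t_0)=\FirstOperator(A_{t_0},\psi_{t_0})=0$, Grönwall's lemma applied on both sides of $t_0$ forces $\Phi\equiv0$ on $(0,+\infty)$. Thus $\FirstOperator(A_t,u_t)=0$ with $A_t-D^2u_t>0$ and $\int u_t=0$, and Lemma~\ref{LemCharacterizationAPriori1} gives $u_t=\psi_t$ for every $t>0$.

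The crux --- and the step I expect to require the most care --- is the derivation of the transport identity for $\dot\Phi$: the essential point is that along \emph{any} admissible curve $(A_t,u_t)$ the Monge--Ampère defect $\FirstOperator(A_t,u_t)$ obeys a \emph{linear}, first-order equation whose coefficients involve only the curve itself, which is exactly what makes the Grönwall estimate work and lets us avoid the implicit-function (let alone Nash--Moser) machinery here. Beyond this, the remaining work is routine but must be done carefully: justifying differentiation under the integral sign, and checking the regularity bookkeeping --- one needs $V_t\in\SetCl^1(\SetT^N)$ uniformly on compact $t$-intervals, hence the restriction $n\ge1$, while the zero-mean normalization enters only to invoke Lemma~\ref{LemCharacterizationAPriori1}.
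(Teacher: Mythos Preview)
Your proof is correct and follows the same core strategy as the paper: derive the PDE by differentiating the identity $\FirstOperator(A_t,\psi_t)=0$ in $t$, and prove uniqueness by tracking the Monge--Amp\`ere defect $\Phi(t)=\FirstOperator(A_t,u_t)$ along the given curve. For the first part your change-of-variables computation is exactly the argument of Lemma~\ref{LemDerivativeFirstOperator} combined with the analogous computation of $D_A\FirstOperator$, just packaged slightly differently.

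For the uniqueness part there is a genuine (if minor) difference worth noting. The paper simply asserts that ``$\FirstOperator(A_t,u_t)$ must be constant and equal to $0$'', implicitly treating hypothesis~\eqref{EqRegularityKantorovich2} as the statement $\tfrac{d}{dt}\FirstOperator(A_t,u_t)=0$. Strictly speaking that is only true once one already knows $\Phi=0$, since the coefficient in $D_u\FirstOperator$ is $f-\Phi$, not $f$. Your derivation makes this precise: the hypothesis yields the \emph{linear} transport identity $\dot\Phi+\OpeDiv(\Phi V_t)=0$, and the $L^2$--Gr\"onwall estimate then forces $\Phi\equiv 0$ from $\Phi(t_0)=0$. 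This is a cleaner justification than what the paper writes. The price you pay is that you need $\OpeDiv V_t\in L^\infty$, hence $n\ge 1$; the paper's statement nominally allows $n\ge 0$, but its one-line argument does not actually address this either, so you are not losing anything the paper genuinely proves.
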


\begin{proof}
If $\psi_t := \OpeKantorovich{A_t}$, for all $t > 0$, we have $\FirstOperator(A_t,\psi_t) = 0$. If we differentiate with respect to $t$, we get
\[ D_u \FirstOperator(A_t, \psi_t)\dot{\psi}_t + D_A \FirstOperator(A_t,\psi_t) \dot{A}_t = 0. \]
We have seen in Lemma \ref{LemDerivativeFirstOperator} that
\[ D_u \FirstOperator(A_t, \psi_t)\dot{\psi}_t  = \OpeDiv \left(f \left[A_t - D^2  \psi_t \right]^{-1} \nabla \dot{\psi}_t\right). \]
On the other hand,
\[ D_A \FirstOperator(A_t,\psi_t) \dot{A}_t = - \OpeDiv \left(f \left[A_t - D^2 \psi_t \right]^{-1} \dot{A}_t A^{-1} \nabla \psi_t \right).\]
We thus get \eqref{EqRegularityKantorovich}.

If $u  : (0,+\infty) \rightarrow \SetCl^{n+2,\alpha}(\SetT^N)$ is $\SetCl^1$ and satisfies \eqref{EqRegularityKantorovich2},  with $u_{t_0} = \psi_{t_0}$ for some $t_0 > 0$, then $\FirstOperator(A_t,u_t)$ must be constant and equal to $\FirstOperator(A_{T_0},u_{t_0}) = 0$. Thus, according to Lemma \ref{LemCharacterizationAPriori1}, $u_t = \Psi_{A_t}$.
\end{proof}

%
%

\section{Initial condition in dimension $2$}

Due to the very technical nature of the proofs, the following sections will only deal with the dimension $2$, to ease the exposition. In the last section, we shall explain what changes in higher dimension.

\medskip

Let $\lambda:\SetR \rightarrow [0,+\infty)$ be a smooth function such that $\lambda_t=0$ if and only if $t=0$.
From now on, we will only consider the cost induced by
\[ A_t = \left( 	\begin{array}{cc}
				1 & 0 \\
				0 & \lambda_t
			\end{array}\right).\]
For $t \neq 0$, let $\psi_t$ be the associated Kantorovich potential between the probability measures $\mu,\nu$, assuming they have the same properties as before (that is, strictly positive and smooth densities), and let $T_t$ be the corresponding optimal transport map. Then, according to Theorem \ref{TheRegularityKantorovich}, $t \mapsto \psi_t$ and $t \mapsto T_t$ are $\SetCl^1$ on $\SetR \setminus \{ 0 \}$. Moreover, Carlier, Galichon, and Santambrogio \cite{MR2607321} proved:

\begin{theorem}[C.--G.--S.]
	As $t \rightarrow 0$, the map $T_t$ converges to the Knothe--Rosenblatt rearrangement $R$ in $L^2(\SetT^2,\mu;\SetT^2)$.
\end{theorem}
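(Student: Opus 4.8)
The plan is to follow the original argument of Carlier, Galichon, and Santambrogio, adapting it to the torus, where the relevant structure is particularly clean. The strategy has three ingredients: a $\Gamma$-convergence / stability argument for the optimal transport plans, the characterization of the Knothe--Rosenblatt rearrangement as the unique transport plan which is optimal ``layer by layer'' for the degenerate limiting cost, and a compactness argument to pass from plans to maps. First I would fix the cost $c_t(x,y) = \tfrac12 d(x_1,y_1)^2 + \tfrac{\lambda_t}{2} d(x_2,y_2)^2$ and normalize, dividing nothing in the first coordinate but observing that as $t \to 0$ the cost $c_t$ converges, uniformly on $\SetT^2 \times \SetT^2$, to the degenerate cost $c_0(x,y) = \tfrac12 d(x_1,y_1)^2$, which only sees the first coordinate. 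Let $\gamma_t := (\idmap, T_t)\#\mu \in \Gamma(\mu,\nu)$ be the optimal plan for $c_t$. By tightness (everything lives on the compact $\SetT^2 \times \SetT^2$) and Prokhorov, any sequence $t_n \to 0$ has a subsequence along which $\gamma_{t_n} \rightharpoonup \gamma_*$ weakly for some $\gamma_* \in \Gamma(\mu,\nu)$; it suffices to identify every such limit $\gamma_*$ with the plan $\gamma_R := (\idmap, R)\#\mu$ induced by the Knothe rearrangement and then upgrade weak convergence of plans to $L^2$-convergence of maps.

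The first key step is to show $\gamma_*$ is optimal for $c_0$ and, among all $c_0$-optimal plans, has its ``second-coordinate disintegration'' optimal for the quadratic cost on $\SetT^1$. Optimality for $c_0$ follows from stability of optimal transport under uniform convergence of costs: for any $\gamma \in \Gamma(\mu,\nu)$, $\int c_{t_n}\,d\gamma_{t_n} \le \int c_{t_n}\,d\gamma$, and letting $n \to \infty$ using $c_{t_n} \to c_0$ uniformly and $\gamma_{t_n} \rightharpoonup \gamma_*$ gives $\int c_0\,d\gamma_* \le \int c_0\,d\gamma$. For the refinement, I would divide the optimality inequality by $\lambda_{t_n}$ after subtracting off the first-coordinate part: writing $c_{t_n} = c_0 + \lambda_{t_n}\, e$ with $e(x,y) = \tfrac12 d(x_2,y_2)^2$, and comparing $\gamma_{t_n}$ with a competitor $\gamma$ that is itself $c_0$-optimal, one gets $\int e\,d\gamma_{t_n} \le \int e\,d\gamma + o(1)/\lambda_{t_n}\cdot(\text{difference of }c_0\text{-costs})$; since the competitor is $c_0$-optimal the $c_0$-cost difference vanishes, so $\int e\,d\gamma_* \le \int e\,d\gamma$ for all $c_0$-optimal $\gamma$. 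Because $c_0$ depends only on the first marginals $\mu^1,\nu^1$, which on $\SetT^1$ have a \emph{unique} optimal plan (induced by $R^1$, by the one-dimensional theory), every $c_0$-optimal plan has first-coordinate marginal equal to $(\idmap,R^1)\#\mu^1$; disintegrating along $x_1$, the freedom is exactly in the conditional plans between $\mu^2_{x_1}$ and $\nu^2_{R^1(x_1)}$, and the inequality $\int e\,d\gamma_* \le \int e\,d\gamma$ forces $\gamma_*$ to pick, for a.e.\ $x_1$, the optimal conditional plan, which by one-dimensional uniqueness is induced by $R^2(x_1,\cdot)$. Hence $\gamma_* = \gamma_R$, independent of the subsequence, so $\gamma_t \rightharpoonup \gamma_R$ as $t \to 0$.

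Finally I would pass from $\gamma_t \rightharpoonup \gamma_R$ to $T_t \to R$ in $L^2(\SetT^2,\mu)$. Since $\gamma_R$ is induced by the map $R$, weak convergence of plans to a plan concentrated on a graph automatically upgrades to convergence in measure of $T_t$ to $R$ with respect to $\mu$ (a standard fact: if $(\idmap,T_t)\#\mu \rightharpoonup (\idmap,R)\#\mu$ and the limit is a graph, then $T_t \to R$ in $\mu$-measure); and because $\SetT^2$ is bounded, convergence in measure of these uniformly bounded maps gives convergence in $L^2(\SetT^2,\mu;\SetT^2)$ by dominated convergence. I expect the main obstacle to be the ``layer-by-layer'' optimality step: one must handle the division by $\lambda_{t_n} \to 0$ carefully, making sure the competitor plans used are genuinely $c_0$-optimal so that the leading-order terms cancel exactly, and one must invoke the right selection/disintegration theorems to make the conditional optimality statement rigorous and to recover $R^2$ from it. The torus setting actually helps here, since there are no boundary terms and the one-dimensional optimal maps $R^1, R^2(x_1,\cdot)$ are unambiguous and monotone. (In fact, since Theorem~\ref{TheRegularityKantorovich} gives $\SetCl^1$ regularity of $t\mapsto\psi_t$ only away from $0$, this convergence theorem is the separate input, due to C.--G.--S., that anchors the behavior as $t\to 0$; reproducing its proof is exactly the program above.)
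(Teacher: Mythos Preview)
The paper does not give its own proof of this statement: it is quoted as a result of Carlier, Galichon, and Santambrogio \cite{MR2607321} and used as an input. Your proposal faithfully reproduces the original C.--G.--S. strategy, adapted to the torus, and is essentially correct. One small imprecision: in the ``second-layer'' step you write that the difference of $c_0$-costs ``vanishes'', but what you actually have is that $\int c_0\,d\gamma - \int c_0\,d\gamma_{t_n} \le 0$ whenever $\gamma$ is $c_0$-optimal; this nonpositivity (not vanishing) is already enough to get $\int e\,d\gamma_{t_n} \le \int e\,d\gamma$ for every $n$, and then lower semicontinuity gives the inequality for $\gamma_*$. The final upgrade from weak convergence of plans to $L^2$ convergence of maps is clean here because $R$ is continuous (the densities are smooth and positive), so one may simply test against $\phi(x,y)=d(y,R(x))^2$.
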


Let us denote by $u^1_0(\cdot)$ and $u^2_0(x_1,\cdot)$ the Kantorovich potentials for respectively $R^1(\cdot)$ and $R^2(x_1,\cdot)$. Indeed, recall that $R^1$ sends the first part $\mu^1$ of the disintegration of $\mu$ onto the first part $\nu^1$ of the disintegration of $\nu$, and that $R^2(x_1,\cdot)$ sends the second part $\mu^2_{x_1}$ onto $\nu^2_{R^1(x_1)}$, in an optimal way for the squared distance on the $1$-dimensional torus $\SetT^1$; hence these transport maps come from some potentials. We have:
\[ R(x) = \left(\begin{array}{c}
					x_1-\partial_1 u^1_0(x_1) \\
					 x_2 - \partial_2 u^2_0(x_1,x_2)
				\end{array}\right).\]
The Carlier--Galichon--Santambrogio theorem suggests some connexion exists between $\psi_t$ and $(u^1_0,u^2_0)$. Since $T_t = \idmap - A_t^{-1}\nabla \psi_t$, let us follow our instinct and set
\[ \psi_t(x_1,x_2) = \psi_t^1(x_1) + \lambda_t \psi_t^2(x_1,x_2),\]
and, to ensure uniqueness, require 
\[ \int \psi^1_t(x_1) \OpeIntd x_1 = 0 \qquad \text{and} \qquad \int \psi^2_t(x_1,x_2) \OpeIntd x_2 = 0.\]
Notice that $\psi^1_t$ and $\psi^2_t$ are then uniquely determined, and are smooth, since 
\[ \psi^1_t(x_1) = \int \psi_t(x_1,x_2) \OpeIntd x_2 \qquad \text{and} \qquad \psi^2_t(x) = \frac{1}{\lambda_t}\left(\psi_t(x) - \psi^1_t(x_1)\right).\]
Let us denote by $E$ the set of all $(t,u^1,u^2) \in \SetR \times \SetCl^\infty(\SetT^1) \times \SetCl^\infty(\SetT^2)$ such that
\[ \int u^1(x_1) \OpeIntd x_1 = 0 \qquad \text{and} \qquad \int u^2(x_1,x_2) \OpeIntd x_2 = 0,\]
and by $\Omega$ the open subset of $E$ formed by the $(t,u^1,u^2)$ such that:
\begin{itemize}
\item either $t \neq 0$, and then $A_t - D^2(u^1 + \lambda_tu^2) > 0$;
\item or $t=0$, and then $1-\partial_{1,1} u^1 > 0$ and $1-\partial_{2,2} u^2 > 0$.
\end{itemize}
Then, thanks to Lemma \ref{LemCConcave}, we can define an operator $\SecondOperator : \Omega \rightarrow \SetCl^\infty(\SetT^2)$ by setting, when $t \neq 0$,
\begin{equation} \label{EqDefSecondOperator}
 \SecondOperator(t,u^1,u^2) := \FirstOperator(A_t,u^1 + \lambda_t u^2),
\end{equation}
where $\FirstOperator$ is the operator introduced in Section \ref{SecNonDegenerate}:
\[ \FirstOperator(A,u) = f - g\left(\idmap - A^{-1} \nabla u\right) \det\left(\idmatrix - A^{-1} D^2 u\right). \]
Since, according to Lemma \ref{LemCharacterizationAPriori1}, $\FirstOperator(A_t,u) = 0$ if and only if $u = \psi_t$, we have:

\begin{lemma}\label{LemCharacterizationAPriori2}
For any $(t,u^1,u^2) \in \Omega$, $\SecondOperator(t,u^1,u^2) = 0$ if and only if $u^1 = \psi_t^1$ and $u^2 = \psi^2_t$.
\end{lemma}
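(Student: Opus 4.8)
The plan is to deduce the lemma directly from Lemma~\ref{LemCharacterizationAPriori1}, using only the uniqueness of the decomposition $\psi_t = \psi_t^1 + \lambda_t \psi_t^2$ that is built into the zero-mean normalizations. The entire content sits in the case $t \neq 0$; the value $t=0$ is handled through the limiting definition of $\SecondOperator(0,\cdot,\cdot)$ and is tied up with the identification of the Knothe potentials carried out in the following sections, so I will address it only at the end.

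First I would fix $(t,u^1,u^2) \in \Omega$ with $t \neq 0$ and set $u := u^1 + \lambda_t u^2$, regarded as a smooth function on $\SetT^2$. The constraints defining $E$ give $\int_{\SetT^2} u = \int_{\SetT^1} u^1(x_1)\OpeIntd x_1 + \lambda_t\int_{\SetT^1}\left(\int_{\SetT^1} u^2(x_1,x_2)\OpeIntd x_2\right)\OpeIntd x_1 = 0$, so $u \in \SetCl^\infty_\NotZeroMeanValue(\SetT^2)$, while the condition placing $(t,u^1,u^2)$ in $\Omega$ for $t\neq 0$ is precisely $A_t - D^2 u > 0$. By definition \eqref{EqDefSecondOperator} we have $\SecondOperator(t,u^1,u^2) = \FirstOperator(A_t,u)$, so Lemma~\ref{LemCharacterizationAPriori1} applies and gives $\SecondOperator(t,u^1,u^2)=0$ if and only if $u = \OpeKantorovich{A_t} = \psi_t$.

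It then remains to match $u = \psi_t$ with the pair of equalities $u^1 = \psi_t^1$ and $u^2 = \psi_t^2$. Both $(u^1,u^2)$ and the canonical pair $(\psi_t^1,\psi_t^2)$ satisfy $\int u^2(x_1,\cdot) = \int \psi_t^2(x_1,\cdot)=0$ for every $x_1$; integrating the identity $u^1(x_1)+\lambda_t u^2(x_1,x_2) = \psi_t^1(x_1)+\lambda_t\psi_t^2(x_1,x_2)$ in $x_2$ yields $u^1 = \psi_t^1$, and then $\lambda_t\neq 0$ forces $u^2 = \psi_t^2$; the converse is immediate, since from $u^1=\psi_t^1$ and $u^2=\psi_t^2$ one rebuilds $u=\psi_t$, for which $\FirstOperator(A_t,\psi_t)=0$ by Proposition~\ref{ProKantorovich}. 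The one delicate point — the natural place where this verbatim argument stops working — is exactly $t=0$: there $\lambda_t=0$ makes the last step collapse, so one must instead invoke the separately defined operator $\SecondOperator(0,\cdot,\cdot)$ together with the (later-established) fact that its unique zero is $(u_0^1,u_0^2)$, which is precisely the business of the subsequent sections and leaves the statement for $t\neq 0$ entirely elementary.
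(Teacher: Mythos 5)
Your argument for $t \neq 0$ is correct and is exactly the (unstated, one-line) argument the paper is alluding to when it says the lemma follows from Lemma~\ref{LemCharacterizationAPriori1}: check that $u := u^1 + \lambda_t u^2$ has zero mean and satisfies $A_t - D^2 u > 0$ (so Lemma~\ref{LemCharacterizationAPriori1} applies), identify $\SecondOperator(t,u^1,u^2)=0$ with $u=\psi_t$, and then use the zero-mean normalization together with $\lambda_t\neq 0$ to split $u=\psi_t$ uniquely into $u^1=\psi^1_t$ and $u^2=\psi^2_t$. No difference of substance from the paper here.

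One small correction about your closing remark on $t=0$. It is true that the verbatim reduction fails there because $\lambda_0 = 0$, and also that the pair $(\psi^1_0,\psi^2_0)$ is only pinned down later (it is \emph{defined} to equal $(u^1_0,u^2_0)$ just before the Corollary following Theorem~\ref{ThePreNashMoser}). But the statement for $t=0$ is \emph{not} "the business of the subsequent sections": Sections~\ref{SecInvertibility} and~\ref{SecEstimate} prove invertibility of $D_u\SecondOperator$ and tame estimates for the Nash--Moser machinery, not the identity of the zero of $\SecondOperator(0,\cdot,\cdot)$. What the main argument actually needs at $t=0$ is only the easy direction, $\SecondOperator(0,u^1_0,u^2_0)=0$, which is just the Monge--Amp\`ere identity for the Knothe map and is immediate from the construction of $u^1_0,u^2_0$. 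If one also wants uniqueness at $t=0$ (so that Lemma~\ref{LemCharacterizationAPriori2} really holds on all of $\Omega$), the right route is direct and elementary: from $\SecondOperator(0,u^1,u^2)=0$ together with the triangular form \eqref{EqSecondOperatorZero}, integrate in $x_2$ and change variables to deduce the one-dimensional Monge--Amp\`ere equation for $x_1\mapsto x_1-\partial_1 u^1(x_1)$ between $\mu^1$ and $\nu^1$, then for each fixed $x_1$ the one for $x_2\mapsto x_2-\partial_2 u^2(x_1,x_2)$ between the disintegrations; monotonicity (from $1-\partial_{1,1}u^1>0$ and $1-\partial_{2,2}u^2>0$) and the normalizations give uniqueness of each 1D potential, hence $(u^1,u^2)=(u^1_0,u^2_0)$. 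This is independent of Nash--Moser.
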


Now, we are going to extend $\SecondOperator$ for $t=0$.
Notice indeed that
\begin{multline*}
 A^{-1} \nabla (u^1 + \lambda_t u^2) = \left(
\begin{array}{c}
\partial_1 u^1 + \lambda_t \partial_1 u^2 \\
\partial_2 u^2
\end{array}
\right) \\
 \text{and} \qquad A^{-1} D^2 (u^1 + \lambda_t u^2) =  \left(
\begin{array}{cc}
\partial_{1,1} u^1 + \lambda_t \partial_{1,1} u^2 	& 	\lambda_t \partial_{1,2} u^2 \\
\partial_{1,2} u^2							& 	\partial_{2,2} u^2
\end{array}
\right),
\end{multline*}
therefore we can smoothly extend $\SecondOperator$. If we conveniently define an operator $\partial$ by setting $\partial u := (\partial_1 u^1, \partial_2 u^2)$, then $R = \idmap - \partial \psi_0$, and 
\begin{equation}\label{EqSecondOperatorZero}
 \SecondOperator(0, u^1, u^2) = f - g\left(\idmap - \partial u\right)\det\left( \idmatrix - D\partial u\right).
 \end{equation}
Alas, we cannot do the same as in the previous section and apply the implicit function theorem, for if we solve $D_u \mathcal{G}(0,\psi^1_0,\psi^2_0)(v^1,v^2) = q$, then \emph{a priori} the solution $v^2$ is not smooth enough. Indeed, as we will see later, if $q \in H^n$, then $v^1 \in H^{n+2}$, but we can only get $v^2 \in H^n$. However, we can circumvent this difficulty by considering $\SetCl^\infty$ functions, so as to have an infinite source of smoothness, and use the Nash--Moser implicit function theorem instead of the “classical” implicit function theorem.

\medskip

Before stating our next result, let us recall some definitions from the Nash--Moser theory. For more details, see for instance Hamilton \cite{MR656198}.

Let $X$ and $Y$ be two Fréchet spaces, endowed each one with a family of increasingly stronger semi-norms, denoted by  $\{\|\cdot\|_n^X\}_{n \geq 0}$ and $\{\|\cdot\|_n^Y\}_{n \geq 0}$. For instance, you can think of $\SetCl^\infty(\SetT^2)$, with the norms $\| \cdot \|_n = \| \cdot \|_{\SetCl^n}$ or equivalently   $\| \cdot \|_n = \| \cdot \|_{H^n}$. A map $\phi : U \rightarrow Y$ is said to be “tame” if it is defined on an open set $U \subset X$, is continuous, and in a neighborhood $V$ of each point, one can find $r \geq 0$, $b \geq 0$ and a sequence $(C_n)_{n \geq b}$ of positive constants such that the following “tame estimate” is satisfied:
\[ \forall x \in V, \forall n \geq b, \| \phi(x) \|_n^Y \leq C_n \left( 1 + \| x \|_{n+r}^X \right).\]
Notice that $r, b, C_n$ can depend on $V$, but $V$ cannot change with $n$. The map $\phi$ is “smooth tame” if it is smooth and if all its Gâteaux derivative $D^k \phi : U \times X \rightarrow Y$ are tame. From the definition \eqref{EqDefSecondOperator} of $\mathcal{G}$, we easily get:

\begin{lemma}
The operator $\mathcal{G} : \Omega \rightarrow \SetCl^\infty(\SetT^2)$ is smooth tame.
\end{lemma}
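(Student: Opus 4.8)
The claim is that $\mathcal{G}:\Omega\to\SetCl^\infty(\SetT^2)$ is smooth tame, where $\Omega$ is an open subset of $\SetR\times\SetCl^\infty(\SetT^1)\times\SetCl^\infty(\SetT^2)$ and, for $t\neq 0$, $\mathcal{G}(t,u^1,u^2)=\FirstOperator(A_t,u^1+\lambda_tu^2)$, extended at $t=0$ by \eqref{EqSecondOperatorZero}. The plan is to read off from the explicit formula that $\mathcal{G}$ is a composition of the standard tame building blocks, and then invoke the closure of smooth tame maps under composition (see Hamilton \cite{MR656198}).

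First I would observe that the target $\SetCl^\infty(\SetT^2)$ is graded by $\|\cdot\|_n=\|\cdot\|_{\SetCl^n}$, and the source is graded by, say, $\|(t,u^1,u^2)\|_n=|t|+\|u^1\|_{\SetCl^n(\SetT^1)}+\|u^2\|_{\SetCl^n(\SetT^2)}$ (that these are genuine Fréchet gradings on which the various operations below are tame is exactly the content of the examples in \cite{MR656198}). The key point is that the partial derivative and the matrix-coefficient map
\[
(t,u^1,u^2)\mapsto \bigl(A_t^{-1}\nabla(u^1+\lambda_tu^2),\; A_t^{-1}D^2(u^1+\lambda_tu^2)\bigr)
\]
extends smoothly across $t=0$ \emph{as written}, because of the cancellations displayed just before \eqref{EqSecondOperatorZero}: the offending factors $1/\lambda_t$ never appear, only $\lambda_t$ itself does, so each entry of the vector and of the matrix is a polynomial in $\lambda_t$ with coefficients that are (compositions of) partial-derivative operators applied to $u^1,u^2$. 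Linear constant-coefficient differential operators $u\mapsto\partial_\nu u$ are tame (indeed $\|\partial_\nu u\|_{\SetCl^n}\le\|u\|_{\SetCl^{n+|\nu|}}$, a tame estimate with $r=|\nu|$), multiplication by the fixed smooth function $\lambda_t$ (or rather $t\mapsto\lambda_t$, smooth on $\SetR$) is tame, addition is tame, and all of these have tame Gâteaux derivatives of every order since they are linear or affine. Hence the map into the vector $w:=\idmap-A_t^{-1}\nabla(u^1+\lambda_tu^2)$ and the matrix $M:=\idmatrix-A_t^{-1}D^2(u^1+\lambda_tu^2)$ is smooth tame from $\Omega$ into $\SetCl^\infty(\SetT^2;\SetT^2)\times\SetCl^\infty(\SetT^2;\SetMatrix_2)$.

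Next I would finish by composing with the remaining operations. The determinant $M\mapsto\det M$ is a polynomial in the entries, hence a smooth tame map $\SetCl^\infty(\SetT^2;\SetMatrix_2)\to\SetCl^\infty(\SetT^2)$; this uses the tame product estimate $\|ab\|_{\SetCl^n}\le C_n(\|a\|_{\SetCl^n}\|b\|_{\SetCl^0}+\|a\|_{\SetCl^0}\|b\|_{\SetCl^n})$ and that products of tame maps are tame. The composition $w\mapsto g\circ w$ (nonlinear superposition by the fixed smooth density $g$, using that $w$ lands near the identity so $g\circ w$ makes sense and that $\SetT^2$ is compact) is the classical example of a smooth tame operator in \cite{MR656198}, with the tame estimate $\|g\circ w\|_{\SetCl^n}\le C_n(1+\|w\|_{\SetCl^n})$ and analogous estimates for its derivatives (the chain rule produces sums of products of derivatives of $g$ with derivatives of $w$, each controlled by the same mechanism). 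Finally $\mathcal{G}=f-(g\circ w)\cdot\det M$ is obtained from these by one more multiplication and one subtraction by the fixed smooth $f$, both tame. Since a composition of finitely many smooth tame maps is smooth tame, $\mathcal{G}$ is smooth tame on $\Omega$.

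The only point demanding care — and the one I expect to be the true content beyond bookkeeping — is the behavior at $t=0$: one must check that $\Omega$ is genuinely open (the two alternative conditions for $t\neq 0$ and $t=0$ glue to an open set, which follows from $\lambda_t\to 0$ as $t\to 0$ together with the explicit form of $A_t^{-1}D^2(u^1+\lambda_tu^2)$ becoming block-diagonal in the limit), and that the extended $\mathcal{G}$ is not merely continuous but smooth tame through $t=0$. That smoothness is precisely what the pre-\eqref{EqSecondOperatorZero} computation secures: every $t$-dependence enters polynomially through $\lambda_t$, so differentiating in $t$ any number of times keeps us inside the same class of tame expressions, with constants locally uniform in $t$ on a neighborhood of $0$. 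Everything else is the standard Nash--Moser toolbox.
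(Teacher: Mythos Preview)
Your proposal is correct and is exactly the kind of argument the paper has in mind: the paper gives no proof beyond the remark ``From the definition \eqref{EqDefSecondOperator} of $\mathcal{G}$, we easily get'', and your write-up simply unpacks this by exhibiting $\mathcal{G}$ as a composition of the standard smooth tame building blocks (partial differentiation, pointwise multiplication, determinant, and superposition by the fixed smooth $g$), together with the observation---already made just before \eqref{EqSecondOperatorZero}---that the $1/\lambda_t$ factors cancel so that everything depends polynomially on $\lambda_t$ and hence extends smoothly through $t=0$. There is nothing to correct or to compare.
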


The Nash--Moser theorem holds for some Fréchet spaces, the so-called “tame spaces” defined as follows. If $E$ is a Banach space, the space of exponentially decreasing sequences in $E$ is defined as:
\[ \Sigma(E) := \left\{ (u_n) \in E^\SetN ~~; \quad \forall n \in \SetN, ~~ \| u \|_n^{\Sigma(E)} := \sum_{k=0}^\infty e^{nk} \|u_k\|^E < \infty \right\}\]
A Fréchet space $X$ is said to be “tame” is there is a Banach space $E$ and two tame linear maps $\Phi : X \rightarrow \Sigma(E)$ and $\Psi : \Sigma(E) \rightarrow X$ such that $\Psi \circ \Phi = \idmap_X$. For instance, $\SetCl^\infty(\SetT^2)$ is a tame space. If $X$ and $Y$ are tame, then so is their cartesian product $X \times Y$.

\begin{theorem}[Nash--Moser]
Let $X$ and $Y$ be two tame spaces. Let $U \subset X$ be an open subset and $\Phi : U \rightarrow Y$ be a smooth tame map. We assume that, for any $u \in U$ and any $q \in Y$, there is a unique $v \in X$ such that $D\Phi(u)v = q$. If the inverse operator $\InverseOperator : U \times Y \rightarrow X$ is a smooth tame map, then $\Phi$ is locally invertible, and the local inverse maps are smooth tame.
\end{theorem}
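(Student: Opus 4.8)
This is the Nash--Moser inverse function theorem, a deep result for which I will only outline the structure of a proof; the details may be found in Hamilton~\cite{MR656198}. The plan is to solve $\Phi(u)=q$, for $q$ close to a given value $q_0=\Phi(u_0)$, by a Newton iteration in which the unavoidable loss of derivatives is absorbed at every step by a smoothing operator.

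First I would exploit that $X$ is a tame space to construct a family of smoothing operators $\{S_\theta\}_{\theta\ge 1}$ on $X$: composing the truncation maps on the model sequence space $\Sigma(E)$ with the two tame linear maps witnessing the tameness of $X$, one obtains operators $S_\theta:X\rightarrow X$ satisfying, for all $m\le n$,
\[
\|S_\theta u\|_n^X \le C_{m,n}\,\theta^{\,n-m}\,\|u\|_m^X
\qquad\text{and}\qquad
\|(\idmap-S_\theta)u\|_m^X \le C_{m,n}\,\theta^{\,m-n}\,\|u\|_n^X .
\]
Writing $V(u,q):=\InverseOperator(u,q)=D\Phi(u)^{-1}q$, which is smooth tame by hypothesis, I would then define the iterates by $u_{k+1}:=u_k+S_{\theta_k}V(u_k,e_k)$ with $e_k:=q-\Phi(u_k)$ and a rapidly increasing sequence, say $\theta_k=\theta_0^{(3/2)^k}$.

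The core of the proof is an induction on $k$ showing that the iterates remain in a fixed neighbourhood of $u_0$ on which the tame estimates for $\Phi$, $D\Phi$, $D^2\Phi$ and $V$ all hold, and that the errors $e_k$ decay super-geometrically in every fixed semi-norm. One writes the second-order Taylor expansion $\Phi(u_{k+1})-\Phi(u_k)=D\Phi(u_k)(u_{k+1}-u_k)+Q_k$ with $Q_k$ controlled quadratically by $u_{k+1}-u_k$, uses the identity $D\Phi(u_k)S_{\theta_k}V(u_k,e_k)=e_k+D\Phi(u_k)(S_{\theta_k}-\idmap)V(u_k,e_k)$ to isolate the smoothing error, so that $e_{k+1}=-\,Q_k-D\Phi(u_k)(S_{\theta_k}-\idmap)V(u_k,e_k)$, and interpolates between the semi-norms via Landau--Kolmogorov inequalities of the same type already used in the proof of Lemma~\ref{LemEUNextOrders}. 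The real difficulty --- and the main obstacle --- is purely quantitative bookkeeping: the free parameters (the growth rate of $\theta_k$, the radius of that neighbourhood, the fixed derivative loss $r$) must be tuned so that the quadratic gain of the Newton step strictly dominates both $r$ and the factors $\theta_k^{\,r}$ produced by $S_{\theta_k}$, which then forces $\sum_k\|u_{k+1}-u_k\|_n^X<\infty$ for every $n$. Consequently $u_k\to u$ in $X$, continuity of $\Phi$ gives $\Phi(u)=q$, the same estimates provide the tame bounds for $u=u(q)$, and invertibility of $D\Phi$ yields local uniqueness.

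Finally, to see that the local inverse $q\mapsto u(q)$ is smooth tame, I would observe that if it is differentiable then its derivative at $q$ in a direction $p$ must be $V(u(q),p)$, a composite of tame maps and hence tame; differentiability itself follows by applying the existence statement just established to the linearized equation, and the higher derivatives are obtained by induction, each one being a tame algebraic expression in $V$, its derivatives, and the inverse map. For the purposes of the present paper one may of course simply invoke Hamilton~\cite{MR656198} directly.
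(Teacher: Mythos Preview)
The paper does not prove this theorem at all: it is stated without proof as a background result, with a reference to Hamilton~\cite{MR656198} for details. Your outline is a correct sketch of the standard Newton--smoothing scheme underlying the Nash--Moser theorem, and your final remark that one may simply invoke Hamilton is exactly what the paper does. So there is nothing to compare; your proposal goes beyond what the paper itself provides.
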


\begin{corollary}[implicit function]\label{CorNMImplicitFunction}
Let $X, Y, Z$ be three tame spaces, and $U \subset X$ and $V \subset Y$ be open subsets. We assume $\Phi : U \times V \rightarrow Z$ is a smooth tame map such that $\Phi(u_0,v_0) = 0$ for some $(u_0,v_0) \in U \times V$. If, for any $u \in U$, $v \in V$ and $q \in Z$, there is a unique $w \in Y$ such that $D_v \Phi(u,v)w = q$, and if the inverse operator $\InverseOperator : U \times V \times Z \rightarrow Y$ is a smooth tame map, then there is a smooth tame map $\psi$ defined in a neighborhood of $u_0$ and taking values in a neighborhood of $v_0$ such that $\Phi(u,v) = 0$ if and only if $v = \psi(u)$.
\end{corollary}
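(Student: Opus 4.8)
The plan is to derive this from the Nash--Moser inverse function theorem stated above, by the classical device of ``straightening'' the map in the first variable. First I would introduce the auxiliary map
\[ \widetilde{\Phi} : U \times V \longrightarrow X \times Z, \qquad \widetilde{\Phi}(u,v) := \bigl(u,\ \Phi(u,v)\bigr). \]
Since the cartesian product of two tame spaces is tame, $X \times Z$ is a tame space; and $\widetilde{\Phi}$ is smooth tame because $\Phi$ is, because the (linear, bounded) projection $(u,v) \mapsto u$ is tame, and because pairing two smooth tame maps yields a smooth tame map. Its Gâteaux derivative at $(u,v)$ in the direction $(h,w) \in X \times Y$ is
\[ D\widetilde{\Phi}(u,v)(h,w) = \bigl(h,\ D_u\Phi(u,v)h + D_v\Phi(u,v)w\bigr). \]

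Next I would verify the hypotheses of the Nash--Moser theorem for $\widetilde{\Phi}$, applied with the tame spaces $X \times Y$ (domain) and $X \times Z$ (target). Given $(u,v) \in U \times V$ and a target $(h,q) \in X \times Z$, the equation $D\widetilde{\Phi}(u,v)(k,w) = (h,q)$ forces $k = h$ and then requires $D_v\Phi(u,v)w = q - D_u\Phi(u,v)h$; by assumption this has a unique solution $w = \InverseOperator\bigl(u,v,\ q - D_u\Phi(u,v)h\bigr) \in Y$. Hence $D\widetilde{\Phi}(u,v)$ is a linear isomorphism for every $(u,v)$, with inverse operator
\[ \widetilde{\InverseOperator}\bigl(u,v,(h,q)\bigr) := \Bigl(h,\ \InverseOperator\bigl(u,v,\ q - D_u\Phi(u,v)h\bigr)\Bigr), \]
which is smooth tame, being assembled by composition, addition and pairing out of the smooth tame maps $\InverseOperator$ and $D_u\Phi$ together with bounded linear operations, so that no derivative is lost. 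By the Nash--Moser inverse function theorem, $\widetilde{\Phi}$ is then locally invertible near $(u_0,v_0)$, with smooth tame local inverse $\Xi$; and because the first component of $\widetilde{\Phi}$ is $u$ itself, $\Xi$ necessarily has the form $\Xi(u,z) = \bigl(u,\ \Xi_2(u,z)\bigr)$ for some smooth tame $\Xi_2$ defined in a neighborhood of $(u_0,0)$.

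Finally I would set $\psi(u) := \Xi_2(u,0)$, which is smooth tame as the composition of $\Xi_2$ with the tame inclusion $u \mapsto (u,0)$, is defined near $u_0$, and satisfies $\psi(u_0) = v_0$ since $\widetilde{\Phi}(u_0,v_0) = (u_0,0)$. Applying $\widetilde{\Phi}$ to $\bigl(u,\psi(u)\bigr) = \Xi(u,0)$ gives $\bigl(u,\Phi(u,\psi(u))\bigr) = (u,0)$, hence $\Phi(u,\psi(u)) = 0$; conversely, if $(u,v)$ lies in the neighborhood on which $\widetilde{\Phi}$ is injective and $\Phi(u,v) = 0$, then $\widetilde{\Phi}(u,v) = (u,0) = \widetilde{\Phi}\bigl(u,\psi(u)\bigr)$, so $v = \psi(u)$; after shrinking to a product neighborhood $U' \times V'$ this is exactly the assertion. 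I do not anticipate a genuine obstacle in this corollary: it is a formal consequence of the inverse function theorem plus the stability of tameness under products, pairings and compositions (see Hamilton \cite{MR656198}); the only point to watch is that the formula for $\widetilde{\InverseOperator}$ involves nothing worse than $\InverseOperator$, the derivative $D_u\Phi$ (tame because $\Phi$ is smooth tame), and bounded linear maps, so that $\widetilde{\InverseOperator}$ genuinely inherits a tame estimate. The substantive difficulty lies elsewhere --- in checking, for the operator $\SecondOperator$, that $D_v\SecondOperator$ is invertible with a smooth tame inverse --- not in deducing the corollary itself.
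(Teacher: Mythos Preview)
Your argument is correct and is exactly the standard derivation of the implicit function theorem from the inverse function theorem, carried out in the tame category. The paper itself does not give a proof of this corollary: it merely states it as a consequence of the Nash--Moser theorem (with an implicit reference to Hamilton~\cite{MR656198}) and moves on to apply it. So there is nothing to compare against; your proof simply fills in what the paper omits, and does so in the expected way.
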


\medskip

Notice that we need only to use this last statement in an open neighborhood of $(0,u_0^1,u_0^2) \in \Omega$, with $u^1_0$ and $u^2_0$ the Kantorovich potential associated with the Knothe--Rosenblatt rearrangement. Let us define this neighborhood $\Omega_0$ in the following way: First, take $\epsilon > 0$ such that $1-\partial_{1,1} u^1_0 > \epsilon$ and $1-\partial_{2,2} u^2_0 > \epsilon$. Then we take for $\Omega_0$ the set of all $(t,u^1_t, u^2_t) \in \Omega$ such that:
\begin{equation}\label{EqDefOmegaZeroA}
\text{if } t=0, \quad 1-\partial_{1,1} u^1 > \epsilon \quad \text{and} \quad 1-\partial_{2,2} u^2 > \epsilon,
\end{equation}
and
\begin{equation}\label{EqDefOmegaZeroB}
\text{if } t \neq 0, \quad  1-\partial_{1,1} u^1 - \lambda_t \partial_{1,1} u^2 > \epsilon \quad \text{and} \quad A_t - D^2(u^1 + \lambda_t u^2) > \epsilon \lambda_t.
\end{equation}

Recall that we denote with a $\NotZeroMeanValue$ subscript the sets of maps with zero mean value: $\SetCl^\infty_\NotZeroMeanValue$ is thus the set formed by the smooth functions $u$ such that $\int u = 0$. When $X$ is a $2$-variable function space, we also denote by a “$\NotStarZeroMeanValue$”  subscript, as in $\SetCl^\infty_\NotStarZeroMeanValue(\SetT^2)$ the set formed by the $\xi \in X$ such that $\int \xi(\cdot,x_2) \OpeIntd x_2 = 0$.

\begin{theorem}\label{ThePreNashMoser}
For all $(t,u^1,u^2) \in \Omega_0$, for any $q \in \SetCl^\infty_\NotZeroMeanValue(\SetT^2)$, there is a unique $(v^1,v^2) \in \SetCl^\infty_\NotZeroMeanValue(\SetT^1) \times \SetCl^\infty_\NotStarZeroMeanValue(\SetT^2)$ such that
\begin{equation}\label{EqDerivative} 
	D_u \SecondOperator(t,u^1,u^2)(v^1,v^2) = q,
\end{equation}
Moreover, the inverse operator
\[ \InverseOperator : \left\{ 	\begin{array}{ccl}
					\Omega_0 \times \SetCl^\infty_\NotZeroMeanValue(\SetT^2)& \rightarrow& \SetCl^\infty_\NotZeroMeanValue(\SetT^1) \times \SetCl^\infty_\NotStarZeroMeanValue(\SetT^2) \\
					\left((t,u^1,u^2),q\right) & \mapsto &(v^1,v^2)
				\end{array} \right.\]
is smooth tame.
\end{theorem}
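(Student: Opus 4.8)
The plan is to establish the theorem by a careful analysis of the linearised operator $D_u\SecondOperator(t,u^1,u^2)$, treating the cases $t\neq0$ and $t=0$ together but with the degeneracy at $t=0$ handled explicitly. First I would write down $D_u\SecondOperator(t,u^1,u^2)(v^1,v^2)$ using Lemma~\ref{LemDerivativeFirstOperator} and the definition \eqref{EqDefSecondOperator}: for $t\neq0$ one has $D_u\SecondOperator(t,u^1,u^2)(v^1,v^2)=D_u\FirstOperator(A_t,u^1+\lambda_tu^2)(v^1+\lambda_tv^2)=\OpeDiv\big(f[A_t-D^2(u^1+\lambda_tu^2)]^{-1}\nabla(v^1+\lambda_tv^2)\big)$, which after dividing by $\lambda_t$ where appropriate (using the block structure displayed before \eqref{EqSecondOperatorZero}) extends continuously to $t=0$, where it becomes $\OpeDiv\big(f[\idmatrix-D\partial(u^1+?u^2)]^{-1}\,(\partial_1v^1,\partial_2v^2)\big)$-type expression coming from $\SecondOperator(0,\cdot)$ in \eqref{EqSecondOperatorZero}. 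The key structural observation is that the equation \eqref{EqDerivative} splits: integrating in $x_2$ kills the $x_2$-divergence part and leaves a one-dimensional elliptic equation for $v^1$ in the variable $x_1$ (a second-order ODE on $\SetT^1$ with coefficients depending on $u^1,u^2,t$), which is uniquely solvable in $\SetCl^\infty_\NotZeroMeanValue(\SetT^1)$ by the same Lax--Milgram plus elliptic-regularity argument as in Lemmas~\ref{LemEUFirstOrder}--\ref{LemFOBijection}; once $v^1$ is known, the remaining equation determines $v^2$ as the solution of another elliptic problem in $\SetCl^\infty_\NotStarZeroMeanValue(\SetT^2)$, with $q$ and $v^1$ entering as data.

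The second half of the proof is the quantitative one: I must show the inverse operator $\InverseOperator$ is \emph{smooth tame}. Smoothness follows because $\InverseOperator$ is obtained by composing the (smooth, by Lemma on $\SecondOperator$) operator $D_u\SecondOperator$ with a linear solution map depending smoothly on coefficients, and differentiating the identity $D_u\SecondOperator(t,u^1,u^2)\circ\InverseOperator=\idmap$ expresses derivatives of $\InverseOperator$ in terms of lower-order data, all of which are tame. For the tame estimates themselves, I would mimic the induction in Lemma~\ref{LemEUNextOrders}: difference-quotient the equation, apply the $H^1$ bound of Lemma~\ref{LemEUFirstOrder} (whose constant $C_\epsilon$ is uniform over $\Omega_0$ precisely because of the $\epsilon$ built into \eqref{EqDefOmegaZeroA}--\eqref{EqDefOmegaZeroB}), and use the Landau--Kolmogorov interpolation inequalities to absorb intermediate-order terms. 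The upshot should be estimates of the shape
\[
\|v^1\|_{H^{n+2}}+\|v^2\|_{H^{n}}\le C_{\epsilon,n}\Big(\|q\|_{H^{n}}+\big(1+\|u^1\|_{\SetCl^{n+2}}+\|u^2\|_{\SetCl^{n+2}}\big)\big(\|v^1\|_{H^{1}}+\|v^2\|_{H^{1}}\big)\Big),
\]
which, combined with the base-case $H^1$ bound, yields the tame estimate $\|\InverseOperator((t,u),q)\|_n\le C_n(1+\|((t,u),q)\|_{n+r})$ for a fixed loss $r$ (here $r=2$ from the $v^1$ component, but crucially only $r=0$ is \emph{lost} on $v^2$, matching the remark after \eqref{EqSecondOperatorZero}).

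The main obstacle, and the reason the Nash--Moser machinery is needed at all rather than the ordinary implicit function theorem, is exactly the asymmetric loss of regularity: $v^2$ is controlled only in $H^n$ when $q\in H^n$, so no single Hölder or Sobolev space is preserved by $\InverseOperator$. Concretely, the difficulty surfaces when one tries to make the constant in the $v^1$-ODE bound uniform as $t\to0$: the coefficients of that ODE involve $u^2$ and its $x_1$-derivatives, and one must check that the ellipticity constant stays bounded below — this is where the precise form of the constraints \eqref{EqDefOmegaZeroA}--\eqref{EqDefOmegaZeroB} (in particular the $1-\partial_{1,1}u^1-\lambda_t\partial_{1,1}u^2>\epsilon$ condition, which survives the degeneration) does the essential work. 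I would therefore spend most of the effort verifying that every constant produced in the induction depends only on $\epsilon$, $n$, and the $\SetCl^3$-type norms that are uniformly bounded on $\Omega_0$, and not on $t$ or on higher norms of $(u^1,u^2)$ in a way that would break tameness; the interpolation step that converts \eqref{EqRevisedTameEstimate}-type inequalities into genuine tame estimates then goes through verbatim as in Lemma~\ref{LemEUNextOrders}.
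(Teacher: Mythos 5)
Your overall blueprint matches the paper's: prove invertibility (Section~\ref{SecInvertibility}) and tame estimates (Section~\ref{SecEstimate}) separately, use Lax--Milgram plus elliptic regularity for the base case, then difference-quotient and interpolate for higher orders, with the $\epsilon$-conditions \eqref{EqDefOmegaZeroA}--\eqref{EqDefOmegaZeroB} supplying the uniform ellipticity constant. Your diagnosis of \emph{why} Nash--Moser is needed (the loss of two derivatives on $v^2$) is also right.

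However, there is a genuine gap at the crux of the argument: how to get a $t$-uniform bound on $v^2$ as $t\to 0$. You propose to ``mimic the induction in Lemma~\ref{LemEUNextOrders},'' but that lemma bounds the \emph{combined} potential $v_t=v^1+\lambda_t v^2$, and extracting $v^2=(v_t-v^1)/\lambda_t$ naively destroys all control as $\lambda_t\to 0$. Your splitting-by-integration-in-$x_2$ only decouples cleanly at $t=0$; for $t\neq 0$ the $\partial_1$-divergence still contains $\lambda_t\partial_1 v^2$ and $\lambda_t\partial_2 v^2$ terms, so the equation for $v^1$ is not decoupled and you cannot simply solve the ODE then ``plug back in.'' The paper resolves this with the matrix decomposition $B_t = U_t + V_t/\lambda_t$ of \eqref{EqRevisedDecompositionMatrixField}, where $U_t$ and $V_t$ (defined in \eqref{EqDefU}--\eqref{EqDefV}) have coefficients bounded uniformly in $t$. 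Since $V_t\nabla v^1=0$ and $v_t=v^1+\lambda_t v^2$, this yields the $t$-uniform elliptic identity \eqref{EqSimple},
\[
\partial_2\bigl[g(\idmap-A_t^{-1}\nabla u_t)(1-\partial_{1,1}u_t)\,\partial_2 v^2\bigr]=q-\OpeDiv(U_t\nabla v_t),
\]
from which the Poincar\'e inequality $\|v^2\|_{L^2}\le C\|\partial_2 v^2\|_{L^2}$ (valid because $\int v^2\,dx_2=0$) produces a $t$-uniform $L^2$ bound on $\partial_2 v^2$, and then the $H^n$ induction of Lemma~\ref{LemPreEstimate} runs. Without this decomposition (or some equivalent device), your argument has no route to the uniform-in-$t$ estimate on $v^2$, which is precisely what makes the extension through $t=0$ work; the rest of your outline, though correctly reasoned, rests on this missing step.
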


\begin{proof}
We will show the existence of $(v^1, v^2)$ in  Section \ref{SecInvertibility}.
We also report the proof of the existence of a tame estimate for the inverse operator $\InverseOperator$ to Section \ref{SecEstimate}.

Let us conclude from that point. Now all that remains to show is that $\InverseOperator$ is continuous, and that the derivative $D^k \InverseOperator$ are tame.

First, if $(t_k, u^1_k,u^2_k,q_k) \in \Omega_0$ converges towards $(t,u^1,u^2,q) \in \Omega_0$, for each $k$ let $(v^1_k,v^2_k)$ be the corresponding inverse. Thanks to the tame estimate from  Section \ref{SecEstimate}, 
 $v^1_k$ and $v^2_k$ are bounded in all the spaces $H^n$. Hence, compact embeddings provide convergence, up to an extraction, to some $v^1, v^2$ as strongly as we want, which, since $D\mathcal{G} $ is continuous, must be the solution of $D \mathcal{G}(t,u^1,u^2)(v^1,v^2) = q$.

Then, all the derivative $D^k \InverseOperator$ are also tame, since they give the solution to the same kind of equation as \eqref{EqDerivative}. Indeed, by differentiating  \eqref{EqDerivative}, we get 
\[ D_u \mathcal{G} D \InverseOperator = D q - D(D_u \mathcal{G}),\]
and then we can apply the results of  Section \ref{SecEstimate} once more.
\end{proof}

If we now set $\psi^1_0 = u^1_0$ and $\psi^2_0 = u^2_0$, with $u^1_0$ and $u^2_0$ the Kantorovich potentials for the Knothe--Rosenblatt rearrangement, we can state the following:

\begin{corollary}
The map $\displaystyle \left\{ \begin{array}{ccl}
				\SetR & \rightarrow & \SetCl^\infty_\NotZeroMeanValue(\SetT^1) \times \SetCl^\infty_\NotStarZeroMeanValue(\SetT^2) \\
				t &\mapsto& (\psi^1_t,\psi^2_t)
			\end{array}\right.$ is smooth.
\end{corollary}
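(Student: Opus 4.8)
The plan is to run the Nash--Moser implicit function theorem (Corollary~\ref{CorNMImplicitFunction}) on the operator $\SecondOperator$ near the point $(0,\psi^1_0,\psi^2_0)$, and to splice the outcome together with the regularity already available for $t\neq 0$.

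First I would check that $(0,\psi^1_0,\psi^2_0)$ is a zero of $\SecondOperator$ and lies in $\Omega_0$. By definition $\psi^1_0=u^1_0$ and $\psi^2_0=u^2_0$ are the Kantorovich potentials of the one--dimensional optimal maps $R^1$ and $R^2(x_1,\cdot)$ between smooth, strictly positive densities on $\SetT^1$; such maps are smooth and strictly increasing, hence $1-\partial_{1,1}u^1_0>0$ and $1-\partial_{2,2}u^2_0>0$, so after picking $\epsilon$ small enough we have $(0,\psi^1_0,\psi^2_0)\in\Omega_0$ and the triangular map $R=\idmap-\partial\psi_0$ is a diffeomorphism of $\SetT^2$. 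Since $R$ sends $\mu$ onto $\nu$, the change of variables $f=(g\circ R)\det DR$ with $DR=\idmatrix-D\partial\psi_0$ is, in view of \eqref{EqSecondOperatorZero}, exactly the identity $\SecondOperator(0,\psi^1_0,\psi^2_0)=0$.

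Next I would feed this into the abstract machinery. The three spaces in play, namely $\SetR$, the target $Y:=\SetCl^\infty_\NotZeroMeanValue(\SetT^1)\times\SetCl^\infty_\NotStarZeroMeanValue(\SetT^2)$, and $Z:=\SetCl^\infty_\NotZeroMeanValue(\SetT^2)$, are all tame; the operator $\SecondOperator$ is smooth tame; and by Theorem~\ref{ThePreNashMoser} its partial differential $D_u\SecondOperator$ is, at every point of $\Omega_0$, a bijection onto $Z$ with smooth tame inverse $\InverseOperator$. Choosing open sets $U\ni 0$ in $\SetR$ and $V\ni(\psi^1_0,\psi^2_0)$ in $Y$ with $U\times V\subset\Omega_0$ and applying Corollary~\ref{CorNMImplicitFunction} to $\SecondOperator|_{U\times V}$, we obtain a smooth tame map $\varphi$ from a neighborhood of $0$ into a neighborhood of $(\psi^1_0,\psi^2_0)$ such that $\SecondOperator(t,u^1,u^2)=0$ if and only if $(u^1,u^2)=\varphi(t)$. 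By Lemma~\ref{LemCharacterizationAPriori2} this forces $\varphi(t)=(\psi^1_t,\psi^2_t)$ for small $t\neq 0$, while at $t=0$ the uniqueness clause of Corollary~\ref{CorNMImplicitFunction} together with the previous step gives $\varphi(0)=(\psi^1_0,\psi^2_0)$. Hence $t\mapsto(\psi^1_t,\psi^2_t)$ coincides near $0$ with the smooth map $\varphi$.

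It then remains to treat $t$ bounded away from $0$, which is comparatively soft: there $A_t$ is positive definite and depends smoothly on $t$, and the argument behind Theorem~\ref{TheRegularityKantorovich}, carried out on the $\SetCl^\infty$ scale (where $\FirstOperator$ is smooth tame), yields that $t\mapsto\psi_t$ is smooth into $\SetCl^\infty(\SetT^2)$; since $\lambda$ is smooth with $\lambda_t\neq 0$ there, the identities $\psi^1_t(x_1)=\int\psi_t(x_1,x_2)\OpeIntd x_2$ and $\psi^2_t=\lambda_t^{-1}(\psi_t-\psi^1_t)$ show $t\mapsto(\psi^1_t,\psi^2_t)$ is smooth on $\SetR\setminus\{0\}$, and gluing with the neighborhood of $0$ finishes the proof. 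The genuinely hard input is Theorem~\ref{ThePreNashMoser} itself --- invertibility of $D_u\SecondOperator$ on the $\SetCl^\infty$ scale together with a tame estimate for the inverse that survives the degeneration $t\to 0$; granting that, the only points here needing care are that $R$ is a true diffeomorphism (so the initial equation really holds) and that the $t=0$ and $t\neq 0$ descriptions of $(\psi^1_t,\psi^2_t)$ agree on their overlap.
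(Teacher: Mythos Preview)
Your proof is correct and follows essentially the same approach as the paper's: Nash--Moser (Corollary~\ref{CorNMImplicitFunction}) together with Theorem~\ref{ThePreNashMoser} and Lemma~\ref{LemCharacterizationAPriori2} near $t=0$, and the classical implicit function argument (Theorem~\ref{ThePreRegularityKantorovich}) away from $0$. You have in fact been more careful than the paper, explicitly verifying that $(0,\psi^1_0,\psi^2_0)\in\Omega_0$ with $\SecondOperator(0,\psi^1_0,\psi^2_0)=0$, and spelling out how smoothness of $\psi_t$ transfers to $(\psi^1_t,\psi^2_t)$ via the integration formulas.
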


\begin{proof}
On some interval $(-\tau, \tau)$, this is a direct consequence of Corollary \ref{CorNMImplicitFunction}, Theorem \ref{ThePreNashMoser}, and Lemma \ref{LemCharacterizationAPriori2}.  For larger $t$, it follows from Theorem \ref{ThePreRegularityKantorovich}.
\end{proof}

\begin{theorem}\label{TheCI}
The curve formed by the Kantorovich potentials $(\psi_t)$ is the only curve in $\SetCl^2_\NotZeroMeanValue(\SetT^2)$  defined on $\SetR$ such that, for $t \neq 0$,
\begin{equation}\label{EqKantorovich}
 A_t - D^2 \psi_t >0 ~~ \text{and} ~~ \OpeDiv\left( f \left[ A_t - D^2 \psi_t \right]^{-1} \left( \nabla \dot{\psi}_t - \dot{A}_t A^{-1}_t \nabla \psi_t \right)\right) = 0,
 \end{equation}
and that can be decomposed into two smooth curves $(\psi^1_t)$ and $(\psi^2_t)$ such that
\[ \psi_t(x_1,x_2) = \psi^1_t(x_1) + \lambda_t \psi^2_t(x_1,x_2),\]
 with $\psi^1_0$ and $\psi^2_0$ being the Kantorovich potentials for the Knothe rearrangement. 
\end{theorem}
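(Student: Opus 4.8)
The plan is to reduce the uniqueness statement to two separate pieces: uniqueness for positive times, which is already handled, and uniqueness near $t=0$, which is where the genuinely new argument is needed. Suppose $(\psi_t)$ is a curve in $\SetCl^2_\NotZeroMeanValue(\SetT^2)$ defined on all of $\SetR$, satisfying \eqref{EqKantorovich} for $t \neq 0$, and admitting a decomposition $\psi_t = \psi^1_t + \lambda_t \psi^2_t$ into two smooth curves with $\psi^1_0, \psi^2_0$ the Knothe potentials. First I would observe that, by Theorem \ref{TheRegularityKantorovich} applied in dimension $2$, the curve of \emph{actual} Kantorovich potentials $(\OpeKantorovich{A_t})$ satisfies \eqref{EqKantorovich}, and moreover any $\SetCl^1$ solution of \eqref{EqRegularityKantorovich2} that agrees with it at one positive time agrees with it for all positive times. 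So it suffices to show that an arbitrary admissible $(\psi_t)$ of the kind described must coincide with $(\OpeKantorovich{A_t})$ \emph{at some} $t_0 > 0$; by symmetry (replacing $t$ by $-t$ and using that $A_{-t}$ is again positive definite for $\lambda_t \neq 0$, or simply noting $\lambda_t > 0$ away from $0$ after possibly composing with $|t|$) the same will then pin it down on the other side, and continuity at $t=0$ forces the full identification.

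The core of the argument is therefore: the decomposition condition at $t=0$, together with the equation for small $t \neq 0$, forces $(\psi^1_t, \psi^2_t)$ to be \emph{the} solution curve produced by Corollary \ref{CorNMImplicitFunction} and Theorem \ref{ThePreNashMoser}. Here is how I would run it. Since $(\psi^1_t)$ and $(\psi^2_t)$ are smooth curves (in $\SetCl^\infty_\NotZeroMeanValue(\SetT^1)$ and $\SetCl^\infty_\NotStarZeroMeanValue(\SetT^2)$ respectively, after checking the mean-value normalizations are inherited from those of $\psi^1_0, \psi^2_0$ and preserved by the decomposition), and since at $t=0$ we have $1 - \partial_{1,1}\psi^1_0 > 0$ and $1 - \partial_{2,2}\psi^2_0 > 0$ — strictly, because the Knothe potentials come from optimal maps between measures with smooth strictly positive densities, so the relevant one-dimensional Monge--Ampère equations are uniformly elliptic — there is an $\epsilon > 0$ and an interval $(-\tau,\tau)$ on which $(t, \psi^1_t, \psi^2_t)$ stays inside the neighborhood $\Omega_0$. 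For $t \neq 0$ in that interval, \eqref{EqKantorovich} is exactly $D_u\FirstOperator(A_t, \psi_t)\dot\psi_t + D_A\FirstOperator(A_t,\psi_t)\dot A_t = 0$, i.e. $\frac{d}{dt}\FirstOperator(A_t,\psi_t) = 0$, so $t \mapsto \FirstOperator(A_t,\psi_t)$ is locally constant on $(0,\tau)$ and on $(-\tau,0)$; writing this through the definition \eqref{EqDefSecondOperator} of $\SecondOperator$, the curve $(t,\psi^1_t,\psi^2_t)$ satisfies $\frac{d}{dt}\SecondOperator(t,\psi^1_t,\psi^2_t) = 0$ for $t \neq 0$, and by smoothness up to $t=0$ the value $\SecondOperator(t,\psi^1_t,\psi^2_t)$ is a single constant $c$ on all of $(-\tau,\tau)$. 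It remains to show $c = 0$: evaluate at $t=0$, where $\SecondOperator(0,\psi^1_0,\psi^2_0) = f - g(\idmap - \partial \psi_0)\det(\idmatrix - D\partial\psi_0)$ by \eqref{EqSecondOperatorZero}; since $R = \idmap - \partial\psi_0$ is the Knothe rearrangement, which pushes $\mu$ onto $\nu$ and whose Jacobian is lower-triangular with diagonal entries $(1-\partial_{1,1}\psi^1_0, 1-\partial_{2,2}\psi^2_0)$, the change-of-variables formula gives precisely $g(R(x))\det DR(x) = f(x)$, hence $c = 0$. Therefore $\SecondOperator(t,\psi^1_t,\psi^2_t) = 0$ for $|t| < \tau$, and by Lemma \ref{LemCharacterizationAPriori2} this means $\psi^1_t = \psi_t^1$ and $\psi^2_t = \psi_t^2$ (the true decomposed Kantorovich potentials), i.e. $\psi_t = \OpeKantorovich{A_t}$, for all such $t$. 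In particular $\psi_{t_0} = \OpeKantorovich{A_{t_0}}$ for any fixed $t_0 \in (0,\tau)$, and Theorem \ref{TheRegularityKantorovich} propagates this to all $t > 0$; the analogous statement holds for $t < 0$; and continuity closes the case $t = 0$.

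The main obstacle, and the only place where anything delicate happens, is justifying the transition $\frac{d}{dt}\SecondOperator(t,\psi^1_t,\psi^2_t) = 0$ \emph{across} $t = 0$ from the hypothesis that it vanishes only for $t \neq 0$: one must know that $t \mapsto \SecondOperator(t,\psi^1_t,\psi^2_t)$ is $\SetCl^1$ (indeed continuous suffices) on the whole interval, which is where the smoothness of the curves $(\psi^1_t),(\psi^2_t)$ — built into the hypothesis of the theorem — and the smooth-tame (in particular continuous) character of $\SecondOperator$ on $\Omega_0$ are used; a locally constant continuous function on $(-\tau,0)\cup(0,\tau)$ that extends continuously to $0$ is globally constant. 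A secondary technical point is checking that the mean-value normalizations assumed on $\psi^1_0,\psi^2_0$ propagate to all $\psi^1_t,\psi^2_t$ so that the curve genuinely lives in the domain where Lemma \ref{LemCharacterizationAPriori2} applies; this follows because the normalization $\int \psi^1_t \OpeIntd x_1 = 0$, $\int \psi^2_t(x_1,\cdot)\OpeIntd x_2 = 0$ is, for each $t$, the \emph{unique} way to split a given $\psi_t \in \SetCl^2_\NotZeroMeanValue(\SetT^2)$ as $\psi^1 + \lambda_t\psi^2$ with $\psi^2 \in \SetCl^2_\NotStarZeroMeanValue$, and the hypothesis already delivers smooth curves with this property at $t = 0$, hence — after possibly re-normalizing, which does not change $\psi_t$ — for $t$ near $0$.
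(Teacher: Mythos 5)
Your proof is correct and takes essentially the same approach as the paper's: reduce to showing $\frac{d}{dt}\SecondOperator(t,\psi^1_t,\psi^2_t)=0$ near $t=0$, evaluate $\SecondOperator$ at the Knothe potentials to pin down the constant, invoke Lemma \ref{LemCharacterizationAPriori2}, and propagate forward with Theorem \ref{TheRegularityKantorovich}. Your version spells out a few things the paper leaves terse — the continuity argument that a locally constant function across a punctured interval is globally constant, the change-of-variables computation showing $\SecondOperator(0,\psi^1_0,\psi^2_0)=0$, and the explicit treatment of $t<0$ — but the underlying argument and the lemmas it rests on are the same.
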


\begin{proof}
Let $u_t = u^1_t + \lambda_tu^2_t$ be such a curve, and let us check that $u_t = \psi_t$. Since $u^1_0$ and $u^2_0$ are the potentials for the Knothe rearrangement,  $(0,u^1_0,u^2_0) \in \Omega_0$,  so  $(t,u^1_t,u^2_t)$ is in $\Omega_0$ at least for $t$ small. For $t \neq 0$, \eqref{EqKantorovich} is equivalent to
\[ D_u\mathcal{F}(t,u_t)\dot{u}_t + D_t \mathcal{F}(t,u_t) = 0,\]
and therefore
\[ D_u \mathcal{G}(t,u^1_t,u^2_t)(\dot{u}^1_t, \dot{u}^2_t)  + D_t \mathcal{G}(t,u^1_t, u^2_t) = 0.\]
By assumption, $\mathcal{G}(0,u^1_0,u^2_0) = 0$. Integrating in time, we get $\mathcal{G}(t,u^1,u^2) = 0$. Therefore, according to Lemma \ref{LemCharacterizationAPriori2}, $u^1_t = \psi^1_t$ and $u^2_t = \psi^2_t$, i.e. $u_t = \psi_t$.

For larger $t$, we apply Theorem \ref{TheRegularityKantorovich}.
\end{proof}

%
%

\section{Proof of the invertibility}\label{SecInvertibility}

We recall that
\[ \FirstOperator(A,u) = f - g\left(\idmap - A^{-1} \nabla u\right) \det\left(\idmatrix - A^{-1} D^2 u\right), \]
and
\begin{equation}\label{EqRappelSecondOperator}
 \SecondOperator(t,u^1,u^2) := \FirstOperator(A_t,u^1 + \lambda_t u^2) \qquad \text{with} \qquad A_t := \left( 	\begin{array}{cc}
				1 & 0 \\
				0 & \lambda_t
			\end{array}\right).
\end{equation}
We want to prove the invertibility of $D_u \SecondOperator(t,u^1,u^2)$. The first lemma will consider the case $t \neq 0$, the second one the case $t=0$.

\begin{lemma}\label{LemInversibilityZero}
For any $(t,u^1,u^2) \in \Omega_0$ with $t \neq 0$, for any $q \in \SetCl^\infty_\NotZeroMeanValue(\SetT^2)$, there is a unique   $(v^1,v^2) \in \SetCl^\infty_\NotZeroMeanValue(\SetT^1) \times \SetCl^\infty_\NotStarZeroMeanValue(\SetT^2)$ such that
\begin{equation}\label{EqDerivative2} 
	D_u \SecondOperator(t,u^1,u^2)(v^1,v^2) = q.
\end{equation}
\end{lemma}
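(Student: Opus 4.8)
\emph{Plan.} The whole point is that when $t\neq 0$ there is nothing genuinely degenerate going on, so I would reduce the statement to the non-degenerate Monge--Amp\`ere analysis of Section~\ref{SecNonDegenerate} and then perform a trivial decomposition. Concretely, since $\SecondOperator(t,u^1,u^2) = \FirstOperator(A_t,u^1+\lambda_t u^2)$ and here $t$ is held fixed and nonzero, the chain rule gives, with $u := u^1 + \lambda_t u^2$,
\[ D_u\SecondOperator(t,u^1,u^2)(v^1,v^2) = D_u\FirstOperator(A_t,u)\bigl(v^1 + \lambda_t v^2\bigr). \]
Thus solving \eqref{EqDerivative2} amounts to first solving a single scalar elliptic equation $D_u\FirstOperator(A_t,u)w = q$ for $w\in\SetCl^\infty_\NotZeroMeanValue(\SetT^2)$, and then splitting $w$ into its $x_1$-part and its complement.

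The first step is to check that $(t,u^1,u^2)\in\Omega_0$ with $t\neq 0$ places us exactly in the hypotheses of Lemmas~\ref{LemEUFirstOrder}--\ref{LemFOBijection}. Condition~\eqref{EqDefOmegaZeroB} says $A_t - D^2 u > \epsilon\lambda_t\,\idmatrix$, and since $N=2$ we have $\det A_t = \lambda_t$, hence $(\det A_t)^{1/(N-1)} = \lambda_t$ and the required threshold $A - D^2 u > \epsilon(\det A)^{1/(N-1)}\idmatrix$ is met. Applying Lemma~\ref{LemFOBijection} for every $n$ and intersecting, the given $q\in\SetCl^\infty_\NotZeroMeanValue(\SetT^2)$ yields a \emph{unique} $w\in\SetCl^\infty_\NotZeroMeanValue(\SetT^2)$ with $D_u\FirstOperator(A_t,u)w = q$. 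Next I would decompose: set $v^1(x_1) := \int w(x_1,x_2)\OpeIntd x_2$ and $v^2(x_1,x_2) := \lambda_t^{-1}\bigl(w(x_1,x_2) - v^1(x_1)\bigr)$. Then $v^1 + \lambda_t v^2 = w$, both functions are smooth, $\int v^1 = \int w = 0$ so $v^1\in\SetCl^\infty_\NotZeroMeanValue(\SetT^1)$, and $\int v^2(x_1,\cdot)\OpeIntd x_2 = \lambda_t^{-1}(v^1(x_1)-v^1(x_1)) = 0$ so $v^2\in\SetCl^\infty_\NotStarZeroMeanValue(\SetT^2)$. By the chain-rule identity above, $(v^1,v^2)$ solves \eqref{EqDerivative2}. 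For uniqueness: if $(v^1,v^2)$ and $(\tilde v^1,\tilde v^2)$ both solve \eqref{EqDerivative2}, then $w := v^1+\lambda_t v^2$ and $\tilde w := \tilde v^1 + \lambda_t\tilde v^2$ both lie in $\SetCl^\infty_\NotZeroMeanValue(\SetT^2)$ (the double integral of $v^2$ vanishes by the $\NotStarZeroMeanValue$-condition) and both solve $D_u\FirstOperator(A_t,u)(\cdot) = q$, so $w = \tilde w$ by Lemma~\ref{LemFOBijection}; integrating in $x_2$ and invoking the $\NotStarZeroMeanValue$-normalization forces $v^1 = \tilde v^1$, and then $v^2 = \tilde v^2$ since $\lambda_t\neq 0$.

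I do not expect a serious obstacle in this lemma: everything needed was prepared in Section~\ref{SecNonDegenerate}, and the only care required is the bookkeeping just described --- matching the $\Omega_0$-condition \eqref{EqDefOmegaZeroB} to the precise ellipticity constant in Lemma~\ref{LemEUFirstOrder}, and verifying that the splitting of $w$ respects the two normalizations. The genuinely delicate case is $t=0$, where the matrix $A_t$ collapses and the scalar reduction above breaks down; that is handled separately in the next lemma, and is where the real work (and ultimately the Nash--Moser machinery) is needed.
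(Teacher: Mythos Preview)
Your proposal is correct and follows essentially the same route as the paper: reduce to the scalar equation $D_u\FirstOperator(A_t,u)w=q$, solve it by the elliptic theory of Section~\ref{SecNonDegenerate}, and then split $w$ as $v^1(x_1)=\int w\,\OpeIntd x_2$, $v^2=\lambda_t^{-1}(w-v^1)$. The paper invokes Lemma~\ref{LemEUNextOrders} where you invoke Lemma~\ref{LemFOBijection}, and you spell out the verification of the ellipticity threshold and the uniqueness argument more carefully than the paper does, but there is no substantive difference.
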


\begin{proof}
Let $(t,u^1,u^2) \in \Omega_0$ with $t \neq 0$, and let $q \in \SetCl^\infty_\NotZeroMeanValue(\SetT^2)$.
Then, if we set $u_t := u^1+ \lambda_t u^2$, Lemma \ref{LemEUNextOrders}  tells us that there is a unique $v_t \in \SetCl^\infty_\NotZeroMeanValue(\SetT^2)$ such that
\begin{equation}\label{EqElliptic}
  \OpeDiv \left(\left(f-\SecondOperator(t,u^1,u^2)\right) \left[\idmatrix -A_t^{-1} D^2 u_t\right]^{-1} A_t^{-1} \nabla v_t\right) = q. 
\end{equation}
Let us define
\[ v^1(x_1) := \int v_t(x_1,x_2) \OpeIntd x_2 \quad \text{and} \quad v^2(x_1,x_2) := \frac{1}{\lambda_t}\left( v_t(x_1,x_2) - v^1(x_1)\right).\]
Then, by construction, $(v^1,v^2)$ is the unique pair solving \eqref{EqDerivative2}.
\end{proof}

\begin{lemma}\label{LemInversibilityNonZero}
For any $(0,u^1,u^2) \in \Omega_0$, for any $q \in \SetCl^\infty_\NotZeroMeanValue(\SetT^2)$, there is a unique   $(v^1,v^2) \in \SetCl^\infty_\NotZeroMeanValue(\SetT^1) \times \SetCl^\infty_\NotStarZeroMeanValue(\SetT^2)$ such that
\begin{equation*}
	D_u \SecondOperator(0,u^1,u^2)(v^1,v^2) = q.
\end{equation*}
\end{lemma}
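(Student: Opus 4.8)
The plan is to compute the differential $D_u\SecondOperator(0,u^1,u^2)$ in closed form — mimicking the proof of Lemma~\ref{LemDerivativeFirstOperator} at the degenerate matrix $A_0$ — and then to solve $D_u\SecondOperator(0,u^1,u^2)(v^1,v^2)=q$ in cascade, exploiting the triangular structure that survives the limit $t=0$. Throughout, write $a := 1-\partial_{1,1}u^1$ (a function of $x_1$ alone) and $b := 1-\partial_{2,2}u^2$, both $>\epsilon$ on $\Omega_0$, and recall $g \geq \delta > 0$.

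\textbf{Step 1: the differential.} On $\Omega_0$ the triangular map $S_0 u := \idmap - \partial u$, that is $x \mapsto \bigl(x_1-\partial_1 u^1(x_1),\, x_2-\partial_2 u^2(x_1,x_2)\bigr)$, is a diffeomorphism of $\SetT^2$, since its Jacobian $\idmatrix - D\partial u$ is lower-triangular with positive diagonal entries $a,b$; its determinant is $ab$. Hence \eqref{EqSecondOperatorZero} amounts to the change-of-variables identity $\int \xi(S_0 u)\,\bigl(f-\SecondOperator(0,u^1,u^2)\bigr)\OpeIntd x = \int \xi\, g$ for all $\xi \in \SetCl^\infty(\SetT^2)$. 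Differentiating this along $(v^1,v^2)$ exactly as in the proof of Lemma~\ref{LemDerivativeFirstOperator} — using $\nabla[\xi\circ S_0 u] = \OpeTranspose{[\idmatrix - D\partial u]}\,\nabla\xi(S_0 u)$ and the arbitrariness of $\xi\circ S_0 u$ — produces
\[ D_u\SecondOperator(0,u^1,u^2)(v^1,v^2) = \OpeDiv\!\bigl( h\,[\idmatrix - D\partial u]^{-1}\,\partial v \bigr), \qquad \partial v := (\partial_1 v^1,\partial_2 v^2), \]
where $h := f - \SecondOperator(0,u^1,u^2) = g(S_0 u)\,ab \geq \delta\epsilon^2 > 0$. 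Inverting the lower-triangular matrix $\idmatrix - D\partial u$ turns $D_u\SecondOperator(0,u^1,u^2)(v^1,v^2) = q$ into
\[ \partial_1\!\bigl( g(S_0 u)\,b\,\partial_1 v^1\bigr) + \partial_2\!\Bigl( g(S_0 u)\bigl[(\partial_{1,2}u^2)\,\partial_1 v^1 + a\,\partial_2 v^2\bigr]\Bigr) = q. \]

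\textbf{Step 2: the cascade.} Integrating this equation in $x_2$ over $\SetT^1$ kills the second term (a full $\partial_2$-derivative) and, since $\partial_1 v^1$ does not depend on $x_2$, leaves the one-dimensional equation $\partial_1\!\bigl(p\,\partial_1 v^1\bigr) = \bar q$ on $\SetT^1$, where $p(x_1):=\int_{\SetT^1} g(S_0 u)\,b\,\OpeIntd x_2 \geq \delta\epsilon > 0$ is smooth and $\bar q(x_1):=\int_{\SetT^1} q(x_1,\cdot)\,\OpeIntd x_2$ has zero mean because $\int q = 0$. (The $x_2$-change of variables $y_2 = x_2-\partial_2 u^2(x_1,x_2)$ identifies $p(x_1)=g^1\bigl(x_1-\partial_1 u^1(x_1)\bigr)$, with $g^1(y_1):=\int g(y_1,\cdot)$ the first-marginal density of $\nu$, so this is exactly a linearised one-dimensional Monge--Ampère equation for the first marginals.) Solving it by one integration — the compatibility $\int\bar q=0$ keeps the flux $p\,\partial_1 v^1$ periodic, the flux constant is fixed so that $\partial_1 v^1$ has zero mean, and the last constant is fixed by $\int v^1 = 0$ — yields a unique $v^1 \in \SetCl^\infty_\NotZeroMeanValue(\SetT^1)$. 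With $v^1$ fixed, set $Q := q - \partial_1\!\bigl(g(S_0 u)\,b\,\partial_1 v^1\bigr)$; the $v^1$-equation gives $\int_{\SetT^1}Q(x_1,\cdot)\,\OpeIntd x_2 = 0$ for every $x_1$, so for each fixed $x_1$ there is a function $\Phi(x_1,\cdot)$, unique up to a constant $c(x_1)$, with $\partial_2\Phi = Q(x_1,\cdot)$, whence $\partial_2 v^2 = \bigl(g(S_0 u)\,a\bigr)^{-1}\Phi - a^{-1}(\partial_{1,2}u^2)\,\partial_1 v^1$. Imposing periodicity $\int_{\SetT^1}\partial_2 v^2\,\OpeIntd x_2 = 0$ (whose coefficient $\int_{\SetT^1}(g(S_0 u)\,a)^{-1}\OpeIntd x_2$ is positive) fixes $c(x_1)$ uniquely, and $\int_{\SetT^1}v^2(x_1,\cdot)\,\OpeIntd x_2 = 0$ fixes the last integration constant, producing a unique $v^2 \in \SetCl^\infty_\NotStarZeroMeanValue(\SetT^2)$. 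Smoothness and periodicity survive each operation, as we only integrate and divide by smooth functions bounded away from $0$; uniqueness is immediate, since any solution must satisfy the integrated equation, hence has $v^1$ as first component, and is then forced.

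\textbf{Main difficulty.} None of these steps is hard in isolation; the essential point — and the reason the lemma is isolated rather than absorbed into an implicit-function argument — is the \emph{asymmetric} gain of regularity. The $v^1$-equation is genuinely (one-dimensionally) elliptic and picks up two derivatives on $q$, whereas the $v^2$-equation only inverts $\partial_2$ with $x_1$ a passive parameter, so $v^2$ gains essentially nothing over $q$ in the $x_1$-direction (the obstruction noted just before the statement). This costs nothing here because the data are $\SetCl^\infty$; but it is precisely why the Nash--Moser scheme is needed, and the associated tame estimates — running the same cascade through the Sobolev scale, with Lemmas~\ref{LemEUFirstOrder}--\ref{LemEUNextOrders} in dimension $1$ for the $v^1$-step and elementary one-dimensional bounds for the $v^2$-step — are deferred to Section~\ref{SecEstimate}.
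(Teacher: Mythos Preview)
Your proof is correct and follows essentially the same route as the paper: compute $D_u\SecondOperator(0,u^1,u^2)(v^1,v^2)=\OpeDiv\bigl((f-\SecondOperator(0,u^1,u^2))[\idmatrix-D\partial u]^{-1}\partial v\bigr)$ by differentiating the change-of-variables identity, then exploit the lower-triangular structure to solve first for $v^1$ (by integrating in $x_2$) and then for $v^2$ (by a one-dimensional inversion in $x_2$ with $x_1$ passive). Your additional remark that $p(x_1)=g^1\bigl(x_1-\partial_1 u^1(x_1)\bigr)$ and your closing paragraph on the asymmetric regularity gain are accurate and helpful, though not in the paper's proof of this lemma.
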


\begin{proof}
We want to solve
\[
D_u \SecondOperator(0,u^1,u^2)(v^1,v^2) =   q. 
\]
By substitution, for any $\xi \in \SetCl^\infty$, Equation \eqref{EqSecondOperatorZero} yields
\[ \int \xi\left(x-\partial u(x)\right)\left[f(x) - \SecondOperator(0,u^1,u^2)(x)\right] \OpeIntd x = \int \xi(y) g(y) \OpeIntd y.\]
Therefore, if we differentiate the previous equation with respect to $u$ along the direction $v$, and recall our notation $\partial u = (\partial_1 u^1, \partial_2 u^2)$ and $\partial v = (\partial_1 v^1, \partial_2 v^2)$, we get
\begin{multline*}
- \int \left\langle \nabla \xi(\idmap -\partial u), \partial  v \right\rangle \left(f-\SecondOperator(0,u^1,u^2)\right) \\
	- \int \xi(\idmap - \partial u) D_u \SecondOperator(0,u^1,u^2) (v^1,v^2) = 0.
\end{multline*}
Since $\nabla [\xi \circ (\idmap - \partial u)] = \OpeTranspose{[\idmatrix - D\partial u]} \nabla \xi(\idmap - \partial u)$, we have
\begin{align*}  
	\left\langle \nabla \xi(\idmap - \partial u), \partial  v \right\rangle 	& 	= \left\langle \nabla [\xi \circ (\idmap - \partial u)], [\idmatrix - D\partial u]^{-1} \partial v \right\rangle 
\end{align*} 
and this yields
\[
D_u \SecondOperator(0,u^1,u^2)(v^1,v^2) =   \OpeDiv \left(\left(f-\SecondOperator(0,u^1,u^2)\right) \left[\idmatrix - D\partial u\right]^{-1} \partial v\right). 
\]
Notice then that
\[ \left(f-\SecondOperator(0,u^1,u^2)\right) \left[\idmatrix - D\partial u\right]^{-1} = g\left(\idmap - \partial u\right)\left(
\begin{array}{cc}
1 - \partial_{2,2} u^2 		&	0				\\	
\partial_{1,2} u^2		& 1-\partial_{1,1} u^1	\\
\end{array}
\right), \] 
thus,
\begin{multline*}
 D_u \SecondOperator(0,u^1,u^2)(v^1,v^2) \\
 = \partial_1 \left[ g\left(x-\partial u(x)\right)\left( 1- \partial_{2,2} u^2(x) \right) \partial_1 v^1(x_1) \right]  + \partial_2 \left[ \ldots \right].
 \end{multline*}
Therefore, if $D_u \SecondOperator(0,u^1,u^2)(v^1,v^2)  = q$, integrating with respect to $x_2$ yields
\begin{equation*}
\int  \partial_1 \left[ g\left(x-\partial u(x)\right)\left( 1- \partial_{2,2} u^2(x) \right)  \partial_1 v^1(x_1) \right]   \OpeIntd x_2= \int q(x) \OpeIntd x_2,
 \end{equation*}
 that is to say
 \begin{equation}\label{EqPreIntegrating}
  \partial_1 \left[ \left\{ \int g\left(x-\partial u(x)\right)\left( 1- \partial_{2,2} u^2(x) \right)  \OpeIntd x_2\right\} \partial_1 v^1(x_1) \right] = \int q(x) \OpeIntd x_2.
 \end{equation}
But there is a smooth map $Q : \SetT^1 \rightarrow \SetR$ such that $\partial_1 Q(x_1) = \int q(x_1,x_2) \OpeIntd x_2$, since 
 $\int q(x) \OpeIntd x = 0$, 
and it is unique if we require $Q(0) = 0$.
Thus, taking a primitive of \eqref{EqPreIntegrating}, there is a $c \in \SetR$ such that:
\[  \underbrace{\left[ \int g\left(x-\partial u (x)\right) \left(1-\partial_{2,2} u^2(x) \right) \OpeIntd x_2\right]}_{G(x_1)} \partial_1 v^1(x_1)  = Q(x_1) + c.\]
Since $G(x_1) > 0$, we get
\[ \partial_1 v^1 = \frac{Q+c}{G},\]
and this yields the unique possible value for $c$ since the integral w.r.t. $x_1$ of the right hand side must be zero. Combined with the condition $\int v^1 \OpeIntd x_1 = 0$, we thus have completely  characterized $v^1$.

Now, let us do the same for $v^2$. We have to solve the equation
\begin{multline*} 
\partial_2 \left[  g\left(\idmap-\partial u\right) \left(1-\partial_{1,1} u^1\right) \partial_2 v^2 \right] \\
 = q -  \partial_1 \left[ g\left(\idmap-\partial u\right)\left( 1- \partial_{2,2} u^2 \right) \partial_1 v^1\right] - \partial_2 \left[ g\left(\idmap-\partial u\right) \partial_{1,2} u^2 \partial_1 v^1 \right],
\end{multline*}
and this is exactly the same kind of equation as \eqref{EqPreIntegrating}. If we fix $x_1 \in \SetT^1$, the same reasoning can be applied here, and thus we get $v^2$.
\end{proof}

This ends the proof of the invertibility. All that is left to show is that we have some tame estimates.

%
%

\section{Proof of the tame estimates}\label{SecEstimate}

Our aim here is to show that, locally on $(t, u^1, u^2) \in \Omega_0$ and $q \in \SetCl^\infty_\NotZeroMeanValue(\SetT^2)$, for any $n \in \SetN$, there is a constant $C_n > 0$ such that, if
\begin{equation}\label{EqDerivative3} 
	D_u \SecondOperator(t,u^1,u^2)(v^1,v^2) = q
\end{equation}
for some $(v^1,v^2) \in \SetCl^\infty_\NotZeroMeanValue(\SetT^1) \times \SetCl^\infty_\NotStarZeroMeanValue(\SetT^2)$, then 
\[ \|v^1\|_{H^{n+2}} + \| v^2 \|_{H^n} \leq C_n\left( 1 + |t| + \|u^1\|_{H^{n+3}} + \|u^2\|_{H^{n+3}} + \|q\|_{H^n} \right).\]
In fact, we will prove something slightly stronger:
\begin{equation}\label{EqGoal}
 \|v^1\|_{H^{n+2}} + \| \partial_2 v^2 \|_{H^n} \leq C_n\left( \|u^1\|_{\SetCl^{n+3}} + \|u^2\|_{\SetCl^{n+3}} + \|q\|_{H^n} \right).
 \end{equation}
Indeed, since $\int v^2(x_1,x_2) \OpeIntd x_2 = 0$, we have a Poincaré inequality, which implies $\| v^2 \|_{H^n} \leq c_n\| \partial_2 v^2 \|_{H^n}$. Notice also that \eqref{EqGoal} would by itself prove there is uniqueness.

We start with the case $t \neq 0$. As the bound for $\| v^1 \|_{H^{n+2}}$ simply follows from Lemma \ref{LemEUNextOrders} and an integration with respect to $x_2$, we just have to find a bound for $ \| \partial_2 v^2 \|_{H^n} $. Let us begin with  $ \| \partial_2 v^2 \|_{L^2} $.

\begin{lemma}\label{LemPrePreEstimate}
Let $M, \epsilon > 0$. There are $C=C(M,\epsilon)$ such that, if $(t,u^1,u^2) \in \Omega_0$ and $q \in \SetCl^\infty_\NotZeroMeanValue(\SetT^2)$ satisfy
\begin{equation}\label{EqPrePreEstimateAssumption}
\|q\|_{L^2} + \|u^1\|_{\SetCl^3} + \| u^2\|_{\SetCl^3} \leq M,
 \end{equation}
if $(v^1,v^2)  \in \SetCl^\infty_\NotZeroMeanValue(\SetT^1) \times \SetCl^\infty_\NotZeroMeanValue(\SetT^2)$ is a solution of \eqref{EqDerivative3}, then
\begin{equation}\label{EqPrePreTame2} 
	\|\partial_2 v^2\|_{L^2} \leq  C.
\end{equation}
\end{lemma}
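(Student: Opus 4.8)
The plan is to rewrite the linearized operator $D_u\SecondOperator$ in a form that stays uniformly elliptic in the $x_2$--direction as $t\to0$, then test the equation against $v^2$ and run an energy estimate. Throughout, write $u_t:=u^1+\lambda_tu^2$, $v_t:=v^1+\lambda_tv^2$ and $a:=\partial_1v^1+\lambda_t\partial_1v^2$, so $A_t^{-1}\nabla v_t=(a,\partial_2v^2)$. Since $\SecondOperator(t,u^1,u^2)=\FirstOperator(A_t,u_t)$, the left side of \eqref{EqDerivative3} equals $D_u\FirstOperator(A_t,u_t)v_t$ and so depends on $(v^1,v^2)$ only through $v_t$; adding to $v^2$ a function of $x_1$ alone and subtracting $\lambda_t$ times it from $v^1$ changes neither \eqref{EqDerivative3} nor $\partial_2v^2$, so we may assume $\int v^2(x_1,\cdot)\OpeIntd x_2=0$ (a Poincaré inequality in $x_2$ then holds) and hence $v^1=\int v_t(\cdot,x_2)\OpeIntd x_2$. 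We may moreover restrict to $0<\lambda_t<\lambda_0$ for a constant $\lambda_0=\lambda_0(M,\epsilon)\le1$ fixed below: if $\lambda_t\ge\lambda_0$, Lemma \ref{LemEUFirstOrder}---applicable since \eqref{EqDefOmegaZeroB} is exactly $A_t-D^2u_t>\epsilon\lambda_t\,\idmatrix=\epsilon(\det A_t)^{1/(N-1)}\idmatrix$---gives $\|v_t\|_{H^1}\le C_\epsilon\|q\|_{L^2}\le C_\epsilon M$, whence $\|\partial_2v^2\|_{L^2}=\lambda_t^{-1}\|\partial_2v_t\|_{L^2}\le\lambda_0^{-1}C_\epsilon M$.

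Applying Lemma \ref{LemDerivativeFirstOperator} to $\FirstOperator(A_t,u_t)$ and computing the relevant cofactor matrix explicitly (the degeneracies cancelling against the $\lambda_t$ in the second component of $\nabla v_t$), one gets
\[
D_u\SecondOperator(t,u^1,u^2)(v^1,v^2)=\OpeDiv\!\left(g\!\left(\idmap-A_t^{-1}\nabla u_t\right)\begin{pmatrix}(1-\partial_{2,2}u^2)\,a+\lambda_t\,\partial_{1,2}u^2\,\partial_2v^2\\ \partial_{1,2}u^2\,a+(1-\partial_{1,1}u^1-\lambda_t\partial_{1,1}u^2)\,\partial_2v^2\end{pmatrix}\right),
\]
an expression which is regular as $t\to0$ (it reduces to the derivative of \eqref{EqSecondOperatorZero}, with $A_t^{-1}\nabla u_t$ understood as $\partial u$). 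Expanding $a$ and collecting terms, the part of the inner product with $\nabla v^2$ that is quadratic in $\nabla v^2$ is precisely $\langle\OpeComatrix(A_t-D^2u_t)\nabla v^2,\nabla v^2\rangle$. Now \eqref{EqDefOmegaZeroB} yields $1-\partial_{2,2}u^2>\epsilon$ and $1-\partial_{1,1}u^1-\lambda_t\partial_{1,1}u^2>\epsilon$, and $|\partial_{1,2}u^2|\le\|u^2\|_{\SetCl^2}\le M$; completing the square (the cross term being $O(\lambda_t)$) and putting $\lambda_0:=\min\{1,\epsilon^2/(4M^2)\}$ gives the pointwise coercivity $\langle\OpeComatrix(A_t-D^2u_t)\xi,\xi\rangle\ge\tfrac\epsilon2\,\xi_2^2+\tfrac{\epsilon\lambda_t}2\,\xi_1^2$.

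Testing \eqref{EqDerivative3} against $v^2$ and integrating by parts, with $G:=g(\idmap-A_t^{-1}\nabla u_t)\ge\delta>0$,
\[
\int q\,v^2=-\int G\Big[\langle\OpeComatrix(A_t-D^2u_t)\nabla v^2,\nabla v^2\rangle+(1-\partial_{2,2}u^2)\,\partial_1v^1\,\partial_1v^2+\partial_{1,2}u^2\,\partial_1v^1\,\partial_2v^2\Big].
\]
Set $P:=\|\partial_2v^2\|_{L^2}$. By the coercivity above the quadratic term is $\ge\tfrac{\delta\epsilon}2\big(P^2+\lambda_t\|\partial_1v^2\|_{L^2}^2\big)$. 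On the other side, $|\int q\,v^2|\le\|q\|_{L^2}\|v^2\|_{L^2}\le C_PM\,P$ by the Poincaré inequality in $x_2$; the last term is $\le\|G\,\partial_{1,2}u^2\,\partial_1v^1\|_{L^2}\,P\le C(M)\,\|v^1\|_{H^1}\,P$; and the remaining term, after one integration by parts in $x_1$ to take $\partial_1$ off $v^2$, is $\le\|\partial_1[G(1-\partial_{2,2}u^2)\partial_1v^1]\|_{L^2}\,\|v^2\|_{L^2}\le C(M)\,\|v^1\|_{H^2}\,C_PP$. Here $\|v^1\|_{H^1},\|v^1\|_{H^2}\le C(M,\epsilon)$ uniformly: since $\lambda_t\le1$ gives $\|u_t\|_{\SetCl^3}\le\|u^1\|_{\SetCl^3}+\|u^2\|_{\SetCl^3}\le M$, Lemma \ref{LemEUFirstOrder} and Lemma \ref{LemEUNextOrders} ($n=1$) apply to $\FirstOperator(A_t,u_t)$ and give $\|v_t\|_{H^2}\le C(M,\epsilon)$, which passes to $v^1=\int v_t\OpeIntd x_2$ by Jensen. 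Altogether $\tfrac{\delta\epsilon}2P^2\le\tfrac{\delta\epsilon}2\big(P^2+\lambda_t\|\partial_1v^2\|_{L^2}^2\big)\le C(M,\epsilon)\,P$, hence $\|\partial_2v^2\|_{L^2}=P\le C(M,\epsilon)$.

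The one delicate point is the term $\int G(1-\partial_{2,2}u^2)\,\partial_1v^1\,\partial_1v^2$: it is the only place where the a priori uncontrolled derivative $\partial_1v^2$ occurs without a compensating power of $\lambda_t$, and it cannot be absorbed into the coercive contribution, which only dominates $\lambda_t\|\partial_1v^2\|_{L^2}^2$. Integrating by parts in $x_1$ removes $\partial_1v^2$ but brings in $\partial_1^2v^1$, which is why one must first obtain an $H^2$--bound on the one--dimensional function $v^1$ via the non--degenerate estimates for $v_t$; this is also what makes the split $\lambda_t\ge\lambda_0$ versus $\lambda_t<\lambda_0$ convenient, the regime $\lambda_t\to0$ being the substantive one. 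All constants produced depend only on $M$, $\epsilon$, $\delta=\inf g$, and the fixed data $f,g,\lambda$, not on $t$.
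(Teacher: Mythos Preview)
Your proof is correct and follows essentially the same strategy as the paper: obtain the uniform bound $\|v_t\|_{H^2}\le C(M,\epsilon)$ from Lemma~\ref{LemEUNextOrders} (which applies because \eqref{EqDefOmegaZeroB} is precisely the hypothesis $A_t-D^2u_t>\epsilon(\det A_t)^{1/(N-1)}\idmatrix$), then test the linearized equation against $v^2$ and close via the Poincar\'e inequality in $x_2$.

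The organization differs. The paper writes $B_t=U_t+V_t/\lambda_t$ with $V_t\nabla v^1=0$, which isolates the clean scalar identity
\[
\partial_2\!\bigl[g(\idmap-A_t^{-1}\nabla u_t)(1-\partial_{1,1}u_t)\,\partial_2 v^2\bigr]=q-\OpeDiv(U_t\nabla v_t),
\]
so that after testing against $v^2$ the entire right-hand side is controlled by $\|q\|_{L^2}+\|U_t\nabla v_t\|_{H^1}\le C(M)\|v_t\|_{H^2}$, with no $\partial_1 v^2$ ever appearing on its own. You instead expand $v_t=v^1+\lambda_t v^2$ first and then test; this produces the full quadratic form $\langle\OpeComatrix(A_t-D^2u_t)\nabla v^2,\nabla v^2\rangle$ plus cross terms in $\partial_1 v^1$, forcing you to (i) split off the regime $\lambda_t\ge\lambda_0$, (ii) prove a pointwise coercivity for the cofactor matrix, and (iii) integrate by parts once more to handle $\int G(1-\partial_{2,2}u^2)\partial_1 v^1\,\partial_1 v^2$. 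All of this is correct, but it is extra bookkeeping that the paper's decomposition sidesteps: keeping $U_t\nabla v_t$ bundled absorbs exactly those terms into the single quantity $\|v_t\|_{H^2}$, so no case split or auxiliary coercivity lemma is needed.
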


\begin{proof}
We set $u_t := u^1 + \lambda_t u^2$ and also $v_t := v^1 + \lambda_t v^2$.  Then, by assumption,
\[ D_u \FirstOperator(A_t, u_t)v_t = q.\]
The property \eqref{EqDefOmegaZeroB} in the definition of $\Omega_0$ ensures we an apply  Lemma \ref{LemEUNextOrders} and get
\begin{equation}\label{EqBoundaryOrderN}
 \| v_t\|_{H^{2}} \leq C_{\epsilon,N,1} \left\{ \|q\|_{L^2} + \| u_t \|_{\SetCl^{3}} \right\} \leq C.
 \end{equation}
We now set
\begin{align*}
 B_t 		& := \left(f-\SecondOperator(t,u^1,u^2)\right)\left[ \idmatrix - A_t^{-1} D^2 u_t\right]^{-1} A_t^{-1} \\
 		& = \frac{f-\SecondOperator(t,u^1,u^2)}{\det(A_t - D^2 u_t)} \OpeTranspose{\left[\OpeComatrix{(A_t - D^2 u_t)}\right]} \\
		& = \frac{g(\idmap - A_t^{-1}\nabla u_t)}{\det A_t} \OpeTranspose{\left[\OpeComatrix{(A_t - D^2 u_t)}\right]} \\
 \end{align*}
 so that, according to \eqref{EqRappelSecondOperator} and Lemma \ref{LemDerivativeFirstOperator},   Equation \eqref{EqDerivative3} becomes
 \begin{equation*}
  \OpeDiv(B_t\nabla v_t) = q.
  \end{equation*}
  Notice that $\det A_t = \lambda_t$ and
 \[
 \OpeComatrix{(A_t - D^2 u_t)}
 	 = \left(		\begin{array}{cc}
  					 \lambda_t- \lambda_t \partial_{2,2} u^2	& \lambda_t \partial_{1,2} u^2_t 	\\
					\lambda_t \partial_{1,2} u^2_t		& 1-\partial_{1,1} u^1_t 
  				\end{array}\right),
  \]
 therefore we can write 
 \begin{equation}\label{EqRevisedDecompositionMatrixField}
 B_t = U_t + V_t/\lambda_t
 \end{equation}
 with 
\begin{gather}
U_t :=  g(\idmap - A_t^{-1}\nabla u_t)  \left( 	\begin{array}{cc}
							1-\partial_{2,2} u^2 	& \partial_{1,2} u^2 \\
							\partial_{1,2} u^2		& 0
					\end{array}	\right), \label{EqDefU} \\
 V_t :=  g(\idmap - A_t^{-1}\nabla u_t)  \left( 	\begin{array}{cc}
							0	& 0 \\
							0	& 1-\partial_{1,1} u_t 
					\end{array}	\right). \label{EqDefV} 
\end{gather}
Thus, 
\[   q=\OpeDiv(B_t\nabla v_t) = \OpeDiv(U_t\nabla v_t) + \frac{1}{\lambda_t}\OpeDiv(V_t \nabla v_t). \]
As $\partial_2 v^1 = 0$,  we have $V_t \nabla v^1 = 0$. Since $v_t = v^1 + \lambda_t v^2$, we get
\begin{equation*}
\OpeDiv(U_t\nabla v_t)  + \OpeDiv(V_t \nabla v^2) = q,
 \end{equation*}
that is to say
\begin{equation}\label{EqSimple}
  \partial_2\left[g(\idmap - A_t^{-1}\nabla u_t)(1-\partial_{1,1} u_t ) \partial_2 v^2\right] = q-\OpeDiv(U_t\nabla v_t).
\end{equation}
Since $g > \delta$ for some $\delta$, and as \eqref{EqDefOmegaZeroB} in the definition of $\Omega_0$ means $1-\partial_{1,1} u_t > \epsilon$, allowing the constant $C$ to change from line to line we get
\begin{align*}
\|\partial_2 v^2\|^2_{L^2} 	&	\leq \frac{C}{\delta\epsilon} \int g(\idmap - A^{-1}\nabla u_t)(1-\partial_{1,1} u_t )|\partial_2 v^2|^2 \\
					& 	\leq C\int \left[q-\OpeDiv(U_t\nabla v_t)\right] v^2 \\
					& 	\leq C\left(\|q\|_{L^2} + \|U_t \nabla v_t\|_{H^1}\right) \|v^2\|_{L^2}
\end{align*}
However, since $\int v^2(x_1,x_2) \OpeIntd x_2 = 0$, we have $\| v^2 \|_{L^2} \leq C\|\partial_2 v^2 \|_{L^2}$.
Therefore,
\[ \| \partial_2 v^2\|_{L^2} \| v^2 \|_{L^2} \leq C\|\partial_2 v^2\|^2_{L^2} \leq C \left(\|q\|_{L^2} + \|U_t \nabla v_t\|_{H^1}\right) \|v^2\|_{L^2}.\]
Thus, since $\|U_t\|_{\SetCl^1} \leq C(1+\|u^1\|_{\SetCl^3} + \|u^2\|_{\SetCl^3}) \leq C$ as we can see from \eqref{EqDefU},
\[ \|\partial_2 v^2\|_{L^2} \leq C\left\{\|q\|_{L^2} + \|v_t\|_{H^2}\right\}.\]
Then, using \eqref{EqBoundaryOrderN}, we get the result.
 \end{proof}

We now proceed by induction to get an estimate for any order $n \in \SetN$.

\begin{lemma}\label{LemPreEstimate}
Under the same assumptions than in the previous lemma, for any $n \in \SetN$, there is a constant $C_n=C_n(M,\epsilon)$ such that
\begin{equation}\label{EqPreTame2} 
	 \| \partial_2 v^2 \|_{H^{n}} 
		\leq C_n 
			\left(  \| q\|_{H^{n}} + \| u^1 \|_{\SetCl^{n+3}} + \| u^2 \|_{\SetCl^{n+3}} \right).
\end{equation}
\end{lemma}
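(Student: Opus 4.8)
The plan is to argue by induction on $n$, the base case $n=0$ being essentially Lemma~\ref{LemPrePreEstimate} (it suffices to keep track, in its proof, of the linear dependence on $\|q\|_{L^2}$ and on $\|u^i\|_{\SetCl^3}$, which is already there). So fix $n\geq1$, assume \eqref{EqPreTame2} for $n-1$, and prove it for $n$. We may assume $t\neq0$ (the case $t=0$ is the $\lambda_t=0$ limit of what follows, and is in fact simpler, since the equation for $v^1$ decouples); moreover, whenever $\lambda_t$ stays bounded below one has $\partial_2 v^2=\frac{1}{\lambda_t}\partial_2 v_t$ with $v_t:=v^1+\lambda_t v^2$, and the bound follows at once from Lemma~\ref{LemEUNextOrders} applied to $D_u\FirstOperator(A_t,u_t)v_t=q$ — so the interesting regime is $\lambda_t$ small.

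The starting point is the identity \eqref{EqSimple} obtained in the proof of Lemma~\ref{LemPrePreEstimate},
\[ \partial_2\bigl(c_t\,\partial_2 v^2\bigr)=q-\OpeDiv(U_t\nabla v_t),\qquad c_t:=g\bigl(\idmap-A_t^{-1}\nabla u_t\bigr)\bigl(1-\partial_{1,1}u_t\bigr), \]
with three facts to record first. (i) By \eqref{EqDefOmegaZeroB} and $g\geq\delta$, one has $c_t\geq\delta\epsilon>0$ uniformly. (ii) Since the lower-right entry of $U_t$ vanishes (see \eqref{EqDefU}), $\OpeDiv(U_t\nabla v_t)$ only involves $\partial_{1,1}v_t$ and $\partial_{1,2}v_t$, never $\partial_{2,2}v_t$. (iii) Lemma~\ref{LemEUNextOrders}, applied (via \eqref{EqDefOmegaZeroB}) to $D_u\FirstOperator(A_t,u_t)v_t=q$, gives $\|v_t\|_{H^{n+2}}\leq C_n(\|q\|_{H^n}+\|u^1\|_{\SetCl^{n+3}}+\|u^2\|_{\SetCl^{n+3}})$, $\lambda_t$ staying bounded; and since $v^1(x_1)=\int v_t(x_1,x_2)\,\OpeIntd x_2$, also $\|v^1\|_{H^{n+2}}\leq\|v_t\|_{H^{n+2}}$, which already disposes of the $v^1$ part of \eqref{EqGoal}. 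Combining (i)--(iii), the right-hand side of \eqref{EqSimple} is bounded in $H^n$ by the right-hand side of \eqref{EqPreTame2}.

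Next I would extract the $H^n$ bound on $\partial_2 v^2$. It is enough to bound $\|\partial_1^n\partial_2 v^2\|_{L^2}$: any $\partial^\beta\partial_2 v^2$ with $\beta$ carrying an $x_2$-derivative can be rewritten, by iterating $\partial_{2,2}v^2=c_t^{-1}[\,q-\OpeDiv(U_t\nabla v_t)-(\partial_2 c_t)\partial_2 v^2\,]$ (that is, \eqref{EqSimple}), in terms of $c_t^{-1}$, of the (controlled) right-hand side, and of $x_2$-derivatives of $\partial_2 v^2$ of strictly lower $x_2$-order; and $\|\partial_1^a\partial_2 v^2\|_{L^2}$ with $a<n$ is $\leq\|\partial_2 v^2\|_{H^{n-1}}$, hence covered by the induction hypothesis. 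The bound on $\|\partial_1^n\partial_2 v^2\|_{L^2}$ then comes from a standard energy argument: apply $\partial_1^n$ to \eqref{EqSimple}, test against $\partial_1^n v^2$, integrate by parts in $x_2$; the principal term produces $\int c_t\,|\partial_1^n\partial_2 v^2|^2\geq\delta\epsilon\,\|\partial_1^n\partial_2 v^2\|_{L^2}^2$; the commutator $[\partial_1^n,c_t\,\partial_2]v^2$ involves only $x_1$-derivatives of $\partial_2 v^2$ of order $\leq n-1$ (induction); and the contribution of $q-\OpeDiv(U_t\nabla v_t)$ is handled by Cauchy--Schwarz, one further integration by parts exploiting the divergence form of $\OpeDiv(U_t\nabla v_t)$, the $x_1$- and $x_2$-Poincaré inequalities (note $\int\partial_1^n v^2\,\OpeIntd x_1=0$), and Young's inequality to absorb the top-order pieces. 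Collecting everything and invoking the induction hypothesis for the lower orders yields \eqref{EqPreTame2}.

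Two points are genuinely delicate. The estimate must be kept \emph{tame}, i.e. linear in $\|q\|_{H^n}$, $\|u^1\|_{\SetCl^{n+3}}$, $\|u^2\|_{\SetCl^{n+3}}$; every product of norms that arises must therefore be split, through the Landau--Kolmogorov interpolation inequalities already used in the proof of Lemma~\ref{LemEUNextOrders} together with the a priori bound $\|q\|_{L^2}+\|u^1\|_{\SetCl^3}+\|u^2\|_{\SetCl^3}\leq M$, into ``one high-order factor times one factor bounded by $M$'', after which the induction closes. I expect the main obstacle, however, to be the $x_1$-degeneracy as $t\to0$: at face value the commutator and the $\OpeDiv(U_t\nabla v_t)$ term make $x_1$-derivatives of $v^2$ of order $n+1$ appear, which the left-hand side cannot control. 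The point is that these terms invariably carry a compensating factor $\lambda_t$ — they originate from the $\lambda_t v^2$ sitting inside $v_t$ — so that, after a further integration by parts in $x_1$, their leading contribution $-\lambda_t\int(\text{positive coefficient})\,|\partial_1^{n+1}v^2|^2$ has the \emph{good} sign, and the remainder is absorbed; making this cancellation and the ensuing interpolation completely rigorous is where the real work of the lemma lies.
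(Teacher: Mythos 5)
Your strategy is the same as the paper's: induct on $n$, differentiate the identity \eqref{EqSimple} $n$ times, test, and close the loop with Landau--Kolmogorov interpolation together with Lemma~\ref{LemEUNextOrders} applied to $v_t=v^1+\lambda_t v^2$. Your observations (i)--(iii), the remark that the case $\lambda_t$ bounded below is immediate from $\partial_2 v^2=\lambda_t^{-1}\partial_2 v_t$, and the reduction of the $v^1$-part of \eqref{EqGoal} to the elliptic estimate on $v_t$ are all correct, and your ``first delicate point'' (splitting products of norms via Landau--Kolmogorov and the bound by $M$) is exactly how the paper keeps the estimate tame.

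The gap is in your closing paragraph, and it is a gap you introduce yourself. You propose one further integration by parts inside the $\OpeDiv(U_t\nabla v_t)$ contribution, which does indeed make $\partial_1^{n+1}v^2$ appear; you then invoke a ``compensating factor $\lambda_t$'' and a ``good sign'' after yet another integration by parts, and you explicitly leave this as the unfinished ``real work of the lemma''. But no such cancellation is needed, because that extra integration by parts is unnecessary. The paper tests $\partial_\nu$ of \eqref{EqSimple}, for \emph{every} $2$-index $\nu$ with $|\nu|=n+1$ (so your preliminary reduction to $\partial_1^n\partial_2 v^2$ by iterating $\partial_{22}v^2=c_t^{-1}[\cdots]$ is also not needed), against $\partial_\nu v^2$, integrates by parts once in $x_2$ to produce the coercive term $\int c_t\,|\partial_2\partial_\nu v^2|^2$, and then estimates the source term by plain Cauchy--Schwarz,
\[
\Bigl|\int \partial_\nu\bigl[q-\OpeDiv(U_t\nabla v_t)\bigr]\,\partial_\nu v^2\Bigr|
\leq \bigl\|\partial_\nu q-\partial_\nu\OpeDiv(U_t\nabla v_t)\bigr\|_{L^2}\,\|\partial_\nu v^2\|_{L^2},
\]
with no further integration by parts. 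The first factor costs one more Sobolev order on $v_t$ than your (iii) records, but Lemma~\ref{LemEUNextOrders} supplies exactly that, \emph{uniformly in $t$}: the defining condition \eqref{EqDefOmegaZeroB} of $\Omega_0$ reads $A_t-D^2u_t>\epsilon\lambda_t\,\idmatrix=\epsilon(\det A_t)^{1/(N-1)}\idmatrix$, which is precisely the hypothesis under which the constant of Lemma~\ref{LemEUNextOrders} is independent of $A$. The second factor is absorbed through the Poincaré inequality $\|\partial_\nu v^2\|_{L^2}\leq c\,\|\partial_2\partial_\nu v^2\|_{L^2}$. After the Leibniz commutator with $c_t$ is interpolated as you describe, the induction closes and the $\lambda_t$-cancellation you were worried about never enters the argument. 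In short: drop the extra integration by parts, and the promissory note you left at the end becomes moot.
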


\begin{proof}
Let us assume \eqref{EqPreTame2} has been proved for some $n \in \SetN$, and let us show it holds even for $n+1$. Let $\nu \in \SetN^2$ be such that $|\nu|:=\nu_1+\nu_2=n+1$. Recall \eqref{EqSimple}, that is to say
\[\partial_2\left[g(\idmap - A^{-1}\nabla u_t)(1-\partial_{1,1} u_t ) \partial_2 v^2\right] = q-\OpeDiv(U_t\nabla v_t).\]
We already know from Lemma \ref{LemFOBijection} that $v_t = v^1 + \lambda_t v^2$ is smooth, therefore,
if we apply $\partial_\nu$, we get
\begin{multline*} 
\partial_2\left[g(\idmap - A^{-1}\nabla u_t)(1-\partial_{1,1} u_t )\partial_2\partial_{\nu} v^2\right]  \\
= - \sum_{0 \leq \alpha < \nu} \binom{\nu}{\alpha} \partial_2\left[\partial_{\nu-\alpha}\left\{ g(\idmap - A^{-1}\nabla u_t)(1-\partial_{1,1} u_t )\right\} \partial_2\partial_{\alpha} v^2\right] \\
+ \partial_\nu q - \partial_\nu\OpeDiv(U_t\nabla v_t).
\end{multline*}
On the other hand,  since $g > \delta$ and $1- \partial_{1,1} u_t > \epsilon$, we have
\begin{align*}
\|\partial_2 \partial_\nu v^2\|^2_{L^2} 	&	\leq \frac{1}{\delta\epsilon} \int g(\idmap - A^{-1}\nabla u_t)(1-\partial_{1,1} u_t )|\partial_2 \partial_\nu v^2|^2 \\
						& 	\leq -\frac{1}{\delta\epsilon} \int \partial_2 \left[ g(\idmap - A^{-1}\nabla u_t)(1-\partial_{1,1} u_t )\partial_2 \partial_\nu v^2\right] \partial_\nu v^2.
\end{align*}
Thus,
\begin{multline*}
\|\partial_2 \partial_\nu v^2\|^2_{L^2}  \\
	\leq \sum_{0 \leq \alpha < \nu}\binom{\nu}{\alpha} \int \left[\partial_{\nu-\alpha}\left\{ g(\idmap - A^{-1}\nabla u_t)(1-\partial_{1,1} u_t )\right\} \partial_2\partial_{\alpha} v^2\right] \partial_2 \partial_\nu v^2 \\
	-\frac{1}{\delta\epsilon}\int \left[ \partial_\nu q - \partial_\nu\OpeDiv(U_t\nabla v_t) \right] \partial_\nu v^2,
\end{multline*}
and therefore
\begin{multline*}
\|\partial_2 \partial_\nu v^2\|^2_{L^2}  \\
	\leq \sum_{0 \leq \alpha < \nu}C \left\|\partial_{\nu-\alpha}\left\{ g(\idmap - A^{-1}\nabla u_t)(1-\partial_{1,1} u_t )\right\} \partial_2\partial_{\alpha} v^2\right\|_{L^2} \left\|\partial_2 \partial_\nu v^2\right\|_{L^2} \\
	+C\left\| \partial_\nu q - \partial_\nu\OpeDiv(U_t\nabla v_t) \right\|_{L^2} \left\| \partial_\nu v^2\right\|_{L^2}.
\end{multline*}
As $\|\partial_\nu v^2\|_{L^2} \leq c\| \partial_2 \partial_\nu v^2\|_{L^2}$, we get
\begin{multline}\label{EqBigEquation}
\| \partial_2 \partial_\nu v^2 \|_{L^2} \leq C \sum_{0 \leq k \leq n} \left\| g(\idmap - A^{-1}\nabla u_t)(1-\partial_{1,1} u_t )\right\|_{\SetCl^{n+1-k}} \left\|\partial_2 v^2 \right\|_{H^k} \\
 +C\left\{ \left\| q \right\|_{H^{n+1}} +\| U_t\nabla v_t\|_{H^{n+2}}\right\}.
\end{multline}
On the one hand, we can use the same Landau--Kolmogorov inequalities as in the proof of Lemma \ref{LemEUNextOrders}, and use again the fact that $a^{1-t}b^t \leq (1-t)a + tb$, to get, for $0 \leq k \leq n$, the following bound:
\begin{multline*}
\left\| g(\idmap - A^{-1}\nabla u_t)(1-\partial_{1,1} u_t )\right\|_{\SetCl^{n+1-k}} \left\|\partial_2 v^2 \right\|_{H^k} \\
\leq c_n \left(\left\|g(\idmap - A^{-1}\nabla u_t)(1-\partial_{1,1} u_t )\right\|_{\SetCl^{n+1}} \left\|\partial_2 v^2 \right\|_{L^2}\right. \\
+\left.\left\| g(\idmap - A^{-1}\nabla u_t)(1-\partial_{1,1} u_t )\right\|_{\SetCl^{1}} \left\|\partial_2 v^2 \right\|_{H^n} \right).
\end{multline*}
Recall we have assumed \eqref{EqPreTame2} holds true for $n$, therefore, using \eqref{EqPrePreEstimateAssumption}, we get
\begin{multline}\label{EqBigEquationPar1}
\left\| g(\idmap - A^{-1}\nabla u_t)(1-\partial_{1,1} u_t )\right\|_{\SetCl^{n+1-k}} \left\|\partial_2 v^2 \right\|_{H^k} \\
\leq c_n \left(1 + \|q\|_{H^n} + \|u^1\|_{\SetCl^{n+3}} + \|u^2\|_{\SetCl^{n+3}}\right).
\end{multline}
On the other hand,
\begin{align*}
\|U_t\nabla v_t\|_{H^{n+2}}& = \| D^{n+1} (U_t\nabla v_t)\|_{H^1}\\
	& \leq C\left\{ \| U_t \|_{\SetCl^{n+2}}\| \nabla v_t\|_{H^1} + \| U_t \|_{\SetCl^1} \| \nabla v_t \|_{H^{n+2}}\right\} ,
\end{align*}
which, since $\|u^1\|_{\SetCl^3} + \|u^2\|_{\SetCl^3} \leq M$, implies
\[
 \|U_t\nabla v_t\|_{H^{n+2}}  \leq C\left\{ \left(1+ \| u^1 \|_{\SetCl^{n+4}} +  \| u^2 \|_{\SetCl^{n+4}}\right)\| v_t\|_{H^2} +\| v_t\|_{H^{n+2}}\right\} .
\]
Then, using Lemma \ref{LemEUNextOrders} we get
\begin{equation}\label{EqBigEquationPart2}
 \|U_t\nabla v_t\|_{H^{n+2}} \leq c_n \left(\|q\|_{H^{n}} + \|u^1 \|_{\SetCl^{n+4}} + \|u^2 \|_{\SetCl^{n+4}}\right). 
 \end{equation}
 Bringing together \eqref{EqBigEquation}, \eqref{EqBigEquationPar1}, and \eqref{EqBigEquationPart2}, we get the estimate we sought.
\end{proof}

\begin{lemma}\label{LemPreEstimateZero}
The result of Lemma \ref{LemPreEstimate} still stands for $t=0$, with the same constants.
\end{lemma}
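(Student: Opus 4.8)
The plan is to re-run at $t=0$ the two-step argument used for $t\neq0$ — the $L^2$-bound of Lemma~\ref{LemPrePreEstimate} followed by the induction of Lemma~\ref{LemPreEstimate} — and to observe that every constant produced along the way was a function of $\delta$, $\epsilon$, $M$ and fixed Sobolev constants only, never of $t$. By the computation in the proof of Lemma~\ref{LemInversibilityNonZero}, equation \eqref{EqDerivative3} at $t=0$ reads
\[ \partial_2\left[g(\idmap-\partial u)\,(1-\partial_{1,1}u^1)\,\partial_2 v^2\right] = q - \OpeDiv(W_0), \qquad W_0 := g(\idmap-\partial u)\begin{pmatrix} (1-\partial_{2,2}u^2)\,\partial_1 v^1 \\ \partial_{1,2}u^2\,\partial_1 v^1 \end{pmatrix}, \]
which is precisely the $\lambda_t\to0$ limit of \eqref{EqSimple}, with $g(\idmap-A_t^{-1}\nabla u_t)$ and $1-\partial_{1,1}u_t$ replaced by their smooth limits $g(\idmap-\partial u)$ and $1-\partial_{1,1}u^1$, and with the $v^1$-linear field $W_0$ in the role of $U_t\nabla v_t$. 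The coercivity inputs survive intact: $g>\delta$ on $\SetT^2$, and $1-\partial_{1,1}u^1>\epsilon$, which is the defining condition \eqref{EqDefOmegaZeroA} of $\Omega_0$ at $t=0$.

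The estimate for $v^1$ must be argued differently than for $t\neq0$, since $A_0$ is degenerate and Lemma~\ref{LemEUNextOrders} is unavailable; but it reduces to an explicit one-dimensional computation. As in the proof of Lemma~\ref{LemInversibilityNonZero}, integrating the displayed equation in $x_2$ and then in $x_1$ gives $\partial_1 v^1=(Q+c)/G$, where $G(x_1):=\int g(\idmap-\partial u)(1-\partial_{2,2}u^2)\OpeIntd x_2$ is bounded below by a constant depending only on $\delta,\epsilon$, the function $Q$ is the primitive of $x_1\mapsto\int q(x_1,x_2)\OpeIntd x_2$ vanishing at $0$, and $c$ is fixed by $\int v^1=0$. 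Differentiating this quotient and absorbing the resulting products with the same tools as in Lemma~\ref{LemPreEstimate} — the Landau--Kolmogorov inequalities together with $a^{1-\theta}b^\theta\le(1-\theta)a+\theta b$ and the Moser-type product estimate — yields $\|v^1\|_{H^{m+2}}\le C_m(\|q\|_{H^m}+\|u^1\|_{\SetCl^{m+3}}+\|u^2\|_{\SetCl^{m+3}})$ for every $m$, with $C_m$ a function of $\delta,\epsilon,M,m$ only.

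For $\partial_2 v^2$ one then copies the proofs of Lemma~\ref{LemPrePreEstimate} and Lemma~\ref{LemPreEstimate} line by line: apply $\partial_\nu$ to the displayed equation for $2$-indices with $|\nu|\le n$, test against $\partial_\nu v^2$, integrate by parts in $x_2$, and use $g>\delta$, $1-\partial_{1,1}u^1>\epsilon$ together with the Poincaré inequality in $x_2$ (where $\int v^2\OpeIntd x_2=0$). The quantities that appear are the $\SetCl^{n+3}$-norms of the smooth coefficient $g(\idmap-\partial u)(1-\partial_{1,1}u^1)$ and the norm $\|W_0\|_{H^{n+2}}$, the latter — $W_0$ being linear in $\partial_1 v^1$ — controlled by the $v^1$-estimate of the previous paragraph taken at the appropriate order, together with $\SetCl^{n+3}$-norms of $u^1,u^2$; both are handled by the very same interpolation as in Lemma~\ref{LemPreEstimate}. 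Every constant so obtained is again a function of $\delta,\epsilon,M,n$ alone, so we recover Lemma~\ref{LemPreEstimate}, that is estimate \eqref{EqPreTame2} — and hence, using the $v^1$-bound, the full inequality \eqref{EqGoal} — at $t=0$ with the very constants of the case $t\neq0$.

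The one point I expect to need real care is the bookkeeping around $v^1$ at $t=0$: tracking which Sobolev order of $v^1$ is required to estimate $W_0$ at the induction step $n\to n+1$, now that $v^1$ is furnished not by an elliptic estimate but by the explicit quotient $(Q+c)/G$ (one needs $v^1\in H^{n+3}$, controlled by $\|q\|_{H^{n+1}}$ and $\SetCl^{n+4}$-norms of $u^1,u^2$, which matches the target order). Everything else is a transcription of Section~\ref{SecEstimate}. Should one prefer to avoid redoing the computation, there is a softer route: for $(0,u^1,u^2)\in\Omega_0$ one has $(t,u^1,u^2)\in\Omega_0$ for all small $|t|$ (the two matrix conditions \eqref{EqDefOmegaZeroB} degenerate continuously to \eqref{EqDefOmegaZeroA} as $\lambda_t\to0$); the solutions $(v^1_t,v^2_t)$ are bounded uniformly in $t$ by Lemma~\ref{LemPreEstimate}; a subsequence converges weakly in $H^{n+2}(\SetT^1)\times\{\,v:\partial_2 v\in H^n(\SetT^2)\,\}$ and strongly in weaker spaces; passing \eqref{EqDerivative3} to the limit and invoking the uniqueness of Lemma~\ref{LemInversibilityNonZero} identifies the limit with $(v^1_0,v^2_0)$; and weak lower semicontinuity of the Sobolev norms transfers the $t$-independent bound to $t=0$.
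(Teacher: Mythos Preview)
Your proposal is correct, but it takes a different route from the paper for most of its length. The paper's proof consists \emph{entirely} of what you call the ``softer route'' at the end: fix $(0,u^1,u^2)\in\Omega_0$ and $q$, observe that $(s,u^1,u^2)\in\Omega_0$ for small $s$, apply Lemma~\ref{LemPreEstimate} to the solutions $(v^1_s,v^2_s)$ to get uniform $H^n$ bounds, extract a convergent subsequence via compact embeddings, identify the limit using uniqueness, and conclude that the estimate survives in the limit. That is the whole argument --- about six lines.

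Your primary approach, re-running the computation directly at $t=0$ with the explicit formula $\partial_1 v^1=(Q+c)/G$ in place of the elliptic estimate for $v_t$, is perfectly valid and in some sense more constructive. The trade-off is exactly the one you flag: the bookkeeping around $\|W_0\|_{H^{n+2}}$ requires $v^1\in H^{n+3}$, which in turn forces you to track how many derivatives of $G$ (hence of $u^1,u^2$) appear when differentiating the quotient; this is routine but tedious. One small caveat: your direct argument produces constants depending only on $\delta,\epsilon,M,n$, but not obviously \emph{the same} constants $C_n(M,\epsilon)$ as in Lemma~\ref{LemPreEstimate}, since the $v^1$-estimate now comes from a different source. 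The approximation argument transfers the very constants of the $t\neq0$ case automatically by lower semicontinuity, which is why the paper phrases the lemma as it does and why that approach is the cleaner fit here.
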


\begin{proof}
Let $(0,u^1,u^2) \in \Omega_0$  and $q \in \SetCl^\infty_\NotZeroMeanValue(\SetT^2)$ such that
\begin{equation}\label{EqPrePreEstimateAssumption2}
 \|q\|_{L^2} + \|u^1\|_{\SetCl^3} + \| u^2\|_{\SetCl^3} \leq M,
 \end{equation}
Then, since $(s,u^1,u^2) \in \Omega_0$ for $s$ small enough,  we can proceed by approximation. Indeed, if $(v^1_s,v^2_s)$ is the solution to
\[ D_u \SecondOperator(s,u^1,u^2)(v^1_s,v^2_s) = q, \]
where $u^1, u^2, q$ have been all fixed, then all the $H^n$ norms of $v^1_s, v^2_s$ are bounded according to Lemma \ref{LemPreEstimate}. Up to an extraction, there is convergence, which by compact embedding is as strong as we want. But the convergence can only be towards the solution of
\[ D_u \SecondOperator(0,u^1,u^2)(v^1,v^2) = q, \]
hence estimate \eqref{EqPreTame2} is still valid for the limit.
\end{proof}

This proves the existence of tame estimates, at least in dimension $2$. Let us now see what changes in higher dimension.

%
%

\section{Higher dimension}

The difficulty in extending those results in higher dimension only comes from the technical nature of Sections \ref{SecInvertibility} and \ref{SecEstimate}. 
We need a decomposition, not only of the potential, but also of the field matrix $B$, extending \eqref{EqRevisedDecompositionMatrixField}. The existence of such a decomposition is the only new difficulty.

\subsection*{Setting and notations}

We consider $\lambda_1, \ldots, \lambda_{N-1} : \SetR \rightarrow [0,+\infty)$, assumed to be smooth and such that $\lambda_k = 0$ if and only if $t=0$. We then define $A_t$ by
\[ A_t := \left( 	\begin{array}{ccccc}
				1	\\
					& \lambda_1(t) \\
					&			& \lambda_1(t) \lambda_2(t) \\
					&			& 					& \ddots \\
					&			&					&		& \prod \lambda_i(t) 
			\end{array}\right).\]
The decomposition of the Kantorovich potential $\psi_t$ becomes
\[ \psi_t(x_1,\ldots,x_N) = \psi^1_t(x_1) + \lambda_1 \psi^2_t(x_1,x_2) + \ldots + \left(\prod_{i < N} \lambda_i\right) \psi^N_t(x_1,\ldots,x_N).\]
where $\psi_t^k$ depends only on the $k$ first variables $x_1,\ldots,x_k$, and is such that
\[ \forall x_1, \ldots, x_{k-1}, \qquad \int \psi_t^k(x_1,\ldots,x_{k-1},y_k) \OpeIntd y_k = 0.\]
For convenience, we set
\[ \NotDecomPsi^k_t := \psi_t^k + \lambda_k \psi_t^{k+1} + \ldots + \left( \prod_{k \leq i < N} \lambda_i \right) \psi^N_t,\]
so that we have
\[ \NotDecomPsi_t^1 := \psi_t, \qquad \NotDecomPsi^k_t = \psi_t^k + \lambda_k \NotDecomPsi^{k+1}_t , \qquad \NotDecomPsi^N_t = \psi_t^N,\]
and
\[  \forall x_1, \ldots, x_{k-1},  \qquad \idotsint \NotDecomPsi^k_t(x_1,\ldots,x_{k-1}, y_k, \ldots, y_N) \OpeIntd y_k \ldots \OpeIntd y_N = 0.\]
For instance, if $d=3$, we have
\[ \psi_t = \psi^1_t + \lambda_1 \psi^2_t + \lambda_1 \lambda_2 \psi^3_t \qquad \text{and} \qquad 
	\left\{ 	\begin{array}{l}
				\NotDecomPsi_t^1 = \psi^1_t + \lambda_1 \psi^2_t + \lambda_1 \lambda_2 \psi^3_t \\
				\NotDecomPsi_t^2 =  \psi^2_t + \lambda_2 \psi^3_t \\
				\NotDecomPsi_t^3 = \psi_t^3.
			\end{array}\right.\]
Let us denote by $E$ the set of all $(t,u^1,\ldots,u^N) \in \SetR \times \prod \SetCl^\infty(\SetT^k)$ such that
\[ \forall k \in \{1,\ldots, N\}, \quad \int u^k \OpeIntd x_k = 0.\]
Then, if $(t,u^1,\ldots,u^N) \in E$, we set 
\[ \NotGathered{u}^N := u^N, \qquad \NotGathered{u}^k := u^k + \lambda_k \NotGathered{u}^{k+1}, \qquad u := \NotGathered{u}^1,\]
and this is consistent with the previous notation. Notice that
\[ \nabla u =
	\left(	\begin{array}{c}
			\partial_1 \NotGathered{u}^1 \\
			\lambda_1 \partial_2 \NotGathered{u}^2 \\
			\lambda_1 \lambda_2 \partial_3 \NotGathered{u}^3 \\
			\vdots \\
			\prod \lambda_k \partial_N \NotGathered{u}^N
		\end{array}\right)
 	\qquad \text{and} \qquad A^{-1} \nabla u =  \partial \NotGathered{u} = 
	\left(	\begin{array}{c}
			\partial_1 \NotGathered{u}^1 \\
			\partial_2 \NotGathered{u}^2 \\
			\partial_3 \NotGathered{u}^3 \\
			\vdots \\
			\partial_N \NotGathered{u}^N
		\end{array}\right),\]
and thus,
\begin{equation} \label{EqHessian}
	 A^{-1} D^2 u = D\partial \NotGathered{u} = \left(	\begin{array}{cccc}
			\partial_{1,2} \NotGathered{u}^1 	& 0 & \cdots &0 \\
			\partial_{1,2} \NotGathered{u}^2  	& \partial_{2,2} \NotGathered{u}^2 	& \ddots & \vdots \\
			\vdots 						& \vdots						& \ddots 	& 0 \\
			\partial_{1,N} \NotGathered{u}^N	& \partial_{2,N} \NotGathered{u}^N & \cdots 	& \partial_{N,N} \NotGathered{u}^N
		\end{array}\right). 
\end{equation}
We define $\Omega$ as the open subset of $E$ formed by the $(t,u)$ such that:
\begin{itemize}
\item either $t \neq 0$, and then $A_t - D^2 u > 0$;
\item or $t=0$, and then $1-\partial_{k,k} u^k > 0$ for all $k$.
\end{itemize}
As previously, we need only to work on a neighborhood $\Omega_0$ of the Kantorovich potential $(0, u^1_0, u^2_0)$, which we will define precisely later.

\subsection*{Invertibility}

We want to solve, for $(0,u) \in \Omega_0$, the equation $D_u \SecondOperator(0,u)v = q$. Since for $t > 0$,
\[   D_u \SecondOperator(t,u)v  = \OpeDiv \left(\left(f-\SecondOperator(t,u)\right) \left[\idmatrix -A^{-1} D^2 u\right]^{-1} A^{-1} \nabla v\right), \]
which, when replacing $A^{-1} D^2 u$ and $A^{-1} \nabla v$ with $D\partial \NotGathered{u}$ and $\partial \NotGathered{v}$, becomes
\[   D_u \SecondOperator(t,u)v = \OpeDiv \left(\left(f-\SecondOperator(t,u)\right) \left[\idmatrix - D\partial \NotGathered{u}\right]^{-1} \partial \NotGathered{v}\right) , \]
and since, when $t=0$,  we have $\NotGathered{u}^k = u^k$ and $\partial \NotGathered{u} = \partial u$, what we would like to solve is 
 \[  q = D_u \SecondOperator(0,u)v = \OpeDiv \left(\left(f-\SecondOperator(0,u)\right) \left[\idmatrix - D\partial {u}\right]^{-1} \partial {v}\right). \]
The trick is to integrate with respect to $x_{k+1},\ldots,x_N$ to get an equation on $v^1,\ldots, v^k$. If $v^1, \ldots, v^{k-1}$ have already been found, $[\idmatrix - D\partial u]^{-1}$ being lower triangular thanks to \eqref{EqHessian}, the resulting equation on $v^k$ is of the same kind as the one we have dealt with in Section \ref{SecInvertibility}. The same reasoning can thus be applied.

\subsection*{Tame estimate}

As in the $2$-dimensional case, we need only to find a tame estimate when $t \neq 0$ for the solution $(v^1,\ldots,v^N)$ of
\[ q = \OpeDiv(B \nabla v) \qquad \text{with} \qquad B := \frac{g(\idmap - A^{-1}\nabla u)}{\det A} \OpeTranspose{\left[\OpeComatrix{(A - D^2 u)}\right]}.\] 

First, notice that by integrating with respect to $x_N$, we are reduced to the $N-1$ dimensional case. Therefore, we can proceed by induction on $N$.

So let us assume we already have a tame estimate for $v^1, \ldots, v^{N-1}$. To get an estimate for $v^N = \NotGathered{v}^N$, we will find one for each $\NotGathered{v}^k$, this time by induction on $k$. Since $\NotGathered{v}^1 = v$ satisfies a nice strictly elliptic equation, and thus comes with a tame estimate, we need only to show how to get one for $\NotGathered{v}^{k}$ if we have one for $\NotGathered{v}^1,\ldots,\NotGathered{v}^{k-1}$.

The key lies in the following decomposition of the matrix $B$: for any $k$,
\[ B = B^1 + \frac{1}{\lambda_1} B^2 + \frac{1}{\lambda_1 \lambda_2} B^3 + \ldots + \frac{1}{\lambda_1 \cdots \lambda_{k-2}} B^{k-1} +  \frac{1}{\lambda_1 \cdots \lambda_{k-1}} \NotGathered{B}^{k},\]
where the coefficients $(b^i_{\alpha,\beta})$ of $B^i$ are zero except when $\min(\alpha,\beta)=i$, and where the coefficients $(\NotGathered{b}^{k}_{\alpha,\beta})$ of $\NotGathered{B}^{k}$ are zero except for $\min(\alpha,\beta) \geq k$ :
\[ B^i = \left(
		\begin{array}{ccccc}
			 &   &  &  \\
			  & & b^i_{i,i} & \cdots & b^i_{i,N} \\
			 & &\vdots &  &  \\
			  & & b^i_{N,i}& 
		\end{array}
	\right),
	\qquad \NotGathered{B}^{k} = \left(
		\begin{array}{ccccc}
			 &   &  &  \\
			  & &  \NotGathered{b}^{k}_{k,k} & \cdots &  \NotGathered{b}^{k}_{k,N} \\
			 & &\vdots & \cdots  &  \vdots \\
			  & &  \NotGathered{b}^{k}_{N,k} &\cdots&  \NotGathered{b}^{k}_{N,N}
		\end{array}
	\right),\]
the point being that all the coefficients $b^i_{\alpha,\beta}, \NotGathered{b}^{k}_{\alpha,\beta}$ can be bounded in $\SetCl^{n}$ by the norms of the $u^i$ in $\SetCl^{n+2}$ uniformly in $t$, at least for small $t$.  Indeed, if such a decomposition exists, since 
\[v = v^1 + \lambda_1 v^2 + \ldots + \lambda_1 \cdots \lambda_{i-2} v^{i-1} +  \lambda_1 \cdots \lambda_{i-1} \NotGathered{v}^{i},\]
with $\partial_i v^j = 0$ if $i > j$, which implies $\partial_i v = \lambda_1 \cdots \lambda_{i-1} \partial_i \NotGathered{v}^i$, we have
\[
\OpeDiv(B\nabla v)   = \left[\sum_{i<k} \frac{1}{\lambda_1 \cdots \lambda_{i-1}}\OpeDiv(B^i \nabla v)\right] + \frac{1}{\lambda_1 \cdots \lambda_{k-1}}\OpeDiv(\NotGathered{B}^k \nabla v),
\]
and thus
\begin{equation}\label{EqDivBMatrix}		
\OpeDiv(B\nabla v)   = \left[\sum_{i<k}\OpeDiv(B^i \nabla \NotGathered{v}^i)\right] +\OpeDiv(\NotGathered{B}^k \nabla \NotGathered{v}^k). 
\end{equation}
On the one hand, the matrix $\NotGathered{B}^k$ is symmetric and non-negative, and we define $\Omega_0$ such that
\[ \forall \xi \in \SetR^N, \quad \epsilon\left( \sum_{i\geq k} |\xi_i|^2 \right) \leq \langle \NotGathered{B}^k \xi, \xi \rangle.\]
On the other hand, since 
\[\forall x_1, \ldots, x_{k-1}, \quad \int \cdots \int \NotGathered{v}^k(x_1,\ldots,x_N) \OpeIntd x_k \ldots \OpeIntd x_N = 0,\]
we have a Poincaré inequality:
\[ \left\| \NotGathered{v}^k \right\|^2_{L^2} \leq C \sum_{i \geq k} \left\| \partial_i \NotGathered{v}^k\right\|^2_{L^2}.\]
Therefore, 
\[ \left\| \NotGathered{v}^k \right\|^2_{L^2} \leq \frac{C}{\epsilon} \int \langle \NotGathered{B}^k \nabla  \NotGathered{v}^k, \nabla  \NotGathered{v}^k \rangle \leq \frac{C}{\epsilon} \left\| \OpeDiv(\NotGathered{B}^k \nabla  \NotGathered{v}^k) \right\|_{L^2} \left\| \NotGathered{v}^k \right\|_{L^2}, \]
and this shows how we can deduce a $L^2$ estimate for $\NotGathered{v}^k$ from \eqref{EqDivBMatrix} and a series of estimates for $\NotGathered{v}^i$, $i < k$. Estimates for the norms $H^n$, $n > 0$, easily follow, by the same reasoning as in Section \ref{SecEstimate}.

Thus, all we need is the decomposition
\[ B = B^1 + \frac{1}{\lambda_1} B^2 + \frac{1}{\lambda_1 \lambda_2} B^3 + \ldots  +  \frac{1}{\lambda_1 \cdots \lambda_{N-1}} {B}^{N},\]
with
\[ B^i = \left(
		\begin{array}{ccccc}
			 &   &  &  \\
			  & & b^i_{i,i} & \cdots & b^i_{i,N} \\
			 & &\vdots &  &  \\
			  & & b^i_{N,i}& 
		\end{array}
	\right).\]
Remember that
\[B := \frac{g(\idmap - A^{-1}\nabla u)}{\det A} \OpeTranspose{\left[\OpeComatrix{(A - D^2 u)}\right]},\]
and $\det A = \lambda_1 \left( \lambda_1 \lambda _2 \right) \cdots \left( \lambda_1 \cdots \lambda_{N-1} \right)$, therefore all we have to do is to show how in $\OpeComatrix{(A - D^2 u)}$ we can gather the $\lambda_k$ so as to get the decomposition we seek.
Since $\partial_{i,j} u = \lambda_1 \cdots \lambda_{\max{(i,j)}-1} \partial_{i,j} \NotGathered{u}^{\max{(i,j)}}$,
\begin{multline*}
 \left[\OpeComatrix{(A - D^2 u)}\right]_{i,j} =  \sum_{\substack{\sigma \in \SetPerm_n\\ \sigma(i)=j}} \prod_{\substack{1 \leq k \leq N \\ k \neq i}} (A - D^2 u)_{k,\sigma(k)}  \\
 =  \sum_{\substack{\sigma \in \SetPerm_n\\ \sigma(i)=j}} \prod_{\substack{1 \leq k \leq N \\ k \neq i}} \lambda_1 \cdots \lambda_{\max(k,\sigma(k))-1} \left( \delta_{k,\sigma(k)} - \partial_{k,\sigma(k)} \NotGathered{u}^{\max(k,\sigma(k))}\right).
 \end{multline*}
Thus, if $i \leq j$, we set $\omega_{\alpha,\beta} =  \lambda_{\alpha} \cdots \lambda_{\max(\alpha,\beta)-1} \left( \delta_{\alpha,\beta} - \partial_{\alpha,\beta} \NotGathered{u}^{\max(\alpha,\beta)}\right)$, and then we get
\begin{multline*}
 \left[\OpeComatrix{(A - D^2 u)}\right]_{i,j}
 	=  \sum_{\substack{\sigma \in \SetPerm_n\\ \sigma(i)=j}} \epsilon(\sigma) \prod_{\substack{1 \leq k \leq N \\ k \neq i}} \lambda_1 \cdots \lambda_{k-1} \omega_{k,\sigma(k)} \\
	 =  \sum_{\substack{\sigma \in \SetPerm_n\\ \sigma(i)=j}} \frac{\epsilon(\sigma)}{ \lambda_1 \cdots \lambda_{i-1}} \left[\prod_{1 \leq k \leq N} \lambda_1 \cdots \lambda_{k-1}\right]\left[  \prod_{\substack{1 \leq k \leq N \\ k \neq i}} \omega_{k,\sigma(k)}\right] ,
\end{multline*}
that is to say,
\[  \left[\OpeComatrix{(A - D^2 u)}\right]_{i,j} = \frac{\det A}{\lambda_1 \cdots \lambda_{i-1}} \sum_{\substack{\sigma \in \SetPerm_n\\ \sigma(i)=j}} \epsilon(\sigma) \prod_{\substack{1 \leq k \leq N \\ k \neq i}} \omega_{k,\sigma(k)}. \]
Since we have assume $i \leq j$, this is exactly what we wanted.

\bibliography{references}
\bibliographystyle{bibstyle}

\end{document}